\newtheorem{nada}{Nada}[section]
\newtheorem{thm}[nada]{Theorem}
\newtheorem{Lemma}[nada]{Lemma}
\newtheorem{Proposition}[nada]{Proposition}
\newtheorem{Corollary}[nada]{Corollary}
\newtheorem*{thm*}{Theorem}
\theoremstyle{definition}
\newtheorem{Definition}[nada]{Definition}
\newtheorem{Remark}[nada]{Remark}
\newtheorem{Example}[nada]{Example}
\newcommand{\Ker}{\operatorname{Ker}}
\newcommand{\Sing}{\operatorname{Sing}}
\newcommand{\p}{\mathbb{P}}
\newcommand{\cA}{\mathcal{A}}
\newcommand{\cB}{\mathcal{B}}
\newcommand{\sF}{\mathscr{F}}
\newcommand{\cO}{\mathcal{O}}
\newcommand{\F}{\mathscr{F}}
\newcommand{\Z}{\mathbb{Z}}
\newenvironment{dem}{\noindent {\bf Proof:}}
{\hfill \framebox[7pt]{} \mbox{} \medskip}
\begin{document}
	\title{Split distributions on grassmann manifolds and
		smooth quadric hypersurfaces}
	\author[Alana Cavalcante]{Alana Cavalcante}
	\address{\sc Alana Cavalcante\\
		UFV\\
		Avenida Peter Henry Rolfs, s/n\\
		36570-900, Viçosa\\ Brazil}
	\email{alana@ufv.br}

	\author[Fernando Lourenço]{Fernando Lourenço}
	\address{\sc Fernando Lourenço\\
		DMM - UFLA\\
		Campus Universitário, Lavras MG, Brazil, CEP 37200-000 }
	\email{fernando.lourenco@ufla.br}


	\date{\today}
	\keywords{Holomorphic distributions, split vector
		bundles, quadric hypersurface }
	
	\begin{abstract}
		This work is dedicated to studying holomorphic distributions on Grassmann manifolds and smooth quadric hypersurfaces. In special, we prove, under certain conditions, when the tangent and conormal sheaves of a distribution splits as a sum of line bundles on these manifolds, generalizing the previous works on Fano threefolds and $\mathbb{P}^{n}$. We analyze how the algebro-geometric properties of the singular set of singular holomorphic distributions relate to their associated sheaves.
	\end{abstract}
	\maketitle
	
	\setcounter{tocdepth}{1}
	

	\section{Introduction}
	
	The key elements for the study of holomorphic distributions are their tangent sheaf, normal sheaf and singular locus. Such study have been addressed by several authors, see for instance \cite{MOJ,AMS,AMD,CJV,GP}.
	
	Concerning the relation between the tangent sheaf of a holomorphic distribution and its singular locus, L. Giraldo and A. J. Pan-Collantes showed in \cite{GP} that the tangent sheaf of a foliation of dimension $2$ on $\mathbb{P}^3$ splits if and only if its singular scheme $Z$ is aCM, i.e., has no intermediate cohomology. More recently, A. Cavalcante, M. Corrêa and S. Marchesi extended this result in \cite{AMS}, for the others smooth weighted projective complete intersection Fano threefolds.
	Using a different technique from the last references, M. Corr\^ea, M. Jardim and R. Vidal Martins also extended in \cite{CJV} the result of \cite{GP}, by considering a codimension one locally free distribution on $\mathbb{P}^n.$ 
	
	Grassmann manifolds and quadric hypersurfaces are homogeneous projective varieties with Picard number equal to one. Both are Fano varieties, i.e., their anticanonical line bundles are ample. This means that they have positive curvature in a certain sense - they are "positive" varieties from the point of view of birational geometry and the classification of algebraic varieties.
	In this work, we are interested in analyzing algebro-geometric properties of the tangent and conormal sheaves
	of  distributions on Grassmann manifolds and quadric hypersurfaces in terms of their singular schemes.
	
	Based on the strategy proof used by authors of \cite{CJV}, the Section \ref{grassm} is intended to extend the result of \cite{GP} for the Grassmannian of $k$-planes in projective space $\p^n, \; Gr(k,n).$ Using a splitting criteria, due to Ottaviani \cite{Ott2}, and the Borel-Weill-Bott Theorem we prove the converse of this theorem for Grassmannian. More precisely, we prove the following result.
	
	\begin{thm} \label{teo grass}
		Let $\sF$ be a codimension $c$ distribution on $Gr(k,n)$, such that the tangent sheaf $T_{\sF}$ is locally free and whose singular locus $Z$ has the expected codimension $c+1.$
		If $T_{\sF}$ splits as a sum of line bundles, then $Z$ is arithmetically Cohen Macaulay. Conversely, if $Z$ has pure codimension $n-k$, and under certain conditions, then $T_{\sF}$ splits as a sum of line bundles.
	\end{thm}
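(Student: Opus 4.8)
The plan is to funnel both implications through a single cohomological dictionary that compares the intermediate cohomology of $T_{\sF}$ with that of the ideal sheaf $\sI_Z$ of the singular scheme. Write $G=Gr(k,n)$, so that $\Pic(G)=\Z\cdot\sO_G(1)$ and every line bundle is some $\sO_G(a)$, and record the two defining sequences of the distribution,
\begin{equation*}
0\to T_{\sF}\to TG\to N_{\sF}\to 0,\qquad 0\to N_{\sF}^{*}\to\Omega^1_G\to T_{\sF}^{*}\to\Exts^1(N_{\sF},\sO_G)\to 0,
\end{equation*}
the second obtained by dualizing the first, which is legitimate because $T_{\sF}$ is locally free and $\Exts^1(TG,\sO_G)=0$. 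Since $Z$ has the expected codimension $c+1$, the cokernel $\Exts^1(N_{\sF},\sO_G)$ is supported exactly on $Z$, and the Eagon--Northcott/Buchsbaum--Rim complex of the degeneracy locus of $T_{\sF}\to TG$ resolves $\sI_Z$ (up to twist) in terms of exterior and symmetric powers of the bundles above; when $c=1$ this degenerates to the direct identification $N_{\sF}\cong\sI_Z(d)$. This is the bridge that links $T_{\sF}$ to $\sI_Z$.

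The engine for exploiting that bridge is Borel--Weil--Bott. I would tensor the two defining sequences (and the resolving complex) by the test bundles of Ottaviani's criterion, namely a suitable family of homogeneous bundles $E=\sO_G(t)\otimes\Sigma^\lambda\sS\otimes\Sigma^\mu\sQ$ built from Schur functors of the tautological sub- and quotient bundles. Because $TG$, $\Omega^1_G$, $\Exts^1(N_{\sF},\sO_G)$ and each $E$ are homogeneous, Borel--Weil--Bott concentrates the cohomology of $TG\otimes E$, of $\Omega^1_G\otimes E$ and of the $Z$-supported term in a single, explicitly computable degree. Taking long exact cohomology sequences then converts the connecting maps into isomorphisms between $H^i(T_{\sF}\otimes E)$ and a shifted intermediate cohomology group of $\sI_Z$, valid outside the finitely many degrees where the ambient homogeneous cohomology is nonzero.

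With the dictionary in place the forward direction is immediate and needs only the subfamily $E=\sO_G(t)$. If $T_{\sF}=\bigoplus_j\sO_G(a_j)$, then $H^i(T_{\sF}(t))=\bigoplus_j H^i(\sO_G(a_j+t))$, and line bundles on $G$ have no intermediate cohomology (their Borel--Weil--Bott weight is either regular dominant, singular, or sent to dominant by the longest Weyl element, placing the cohomology only in degree $0$ or $\dim G$); the dictionary then forces the intermediate groups $H^i(\sI_Z(t))$ to vanish for all $t$, i.e.\ $Z$ is arithmetically Cohen--Macaulay. For the converse I would run the equivalence backwards against the \emph{full} Ottaviani family: the hypothesis that $Z$ is aCM of pure codimension $n-k$ keeps its Eagon--Northcott resolution of controlled length and kills the $\sO_G(t)$-twisted intermediate cohomology of $\sI_Z$, and, after imposing the ``certain conditions'' that neutralize the ambient Borel--Weil--Bott contributions, this propagates to the vanishing of $H^i(T_{\sF}\otimes E)$ in the intermediate range for every test $E$; Ottaviani's criterion then yields that $T_{\sF}$ splits into line bundles.

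The main obstacle is neither endpoint but the asymmetry exposed in the previous step: the aCM property delivers vanishing only against powers $\sO_G(t)$ of the Pl\"ucker class, whereas Ottaviani's criterion demands vanishing against the entire Schur-twisted family $\Sigma^\lambda\sS\otimes\Sigma^\mu\sQ$. Upgrading the former to the latter is exactly where the ``certain conditions'' of the statement must enter, and verifying via Borel--Weil--Bott that the cohomology of $TG\otimes E$, $\Omega^1_G\otimes E$ and $\Exts^1(N_{\sF},\sO_G)\otimes E$ either vanishes or lands outside the intermediate range requires careful, case-dependent bookkeeping over the twists $a_j$ and the weights $(\lambda,\mu)$. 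The pure-codimension hypothesis $n-k$ is what makes the determinantal resolution behave and lets the aCM condition translate cleanly, so isolating the precise admissible range is the delicate technical core of the argument.
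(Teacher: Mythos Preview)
Your overall architecture matches the paper's---Eagon--Northcott for the forward implication, Ottaviani's splitting criterion plus Borel--Weil--Bott for the converse---but the forward argument as you have written it has a gap when $c>1$. The ``dictionary'' you set up, namely isomorphisms between $H^i(T_{\sF}\otimes E)$ and shifted cohomology of $\sI_Z$ coming from the long exact sequence of $0\to T_{\sF}\to TG\to N_{\sF}\to 0$, only applies directly when $c=1$, since only then is $N_{\sF}\cong\sI_Z(r)$. For general $c$ one must chase the Eagon--Northcott complex itself, whose terms are $\Omega^{g+j}_G\otimes S_j(T_{\sF})\otimes\det(T_{\sF})$ with $g=\rank T_{\sF}$. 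The splitting of $T_{\sF}$ makes each $S_j(T_{\sF})$ a sum of line bundles, so after twisting by $\sO_G(t)$ the vanishing you actually need is $H^p_*(G,\Omega^{m-i}_G)=0$ in the ranges $1\le p\le m-2$ (for $i=1$) and $1\le p<m-i$ (for $2\le i\le c$), \emph{not} the true but insufficient fact that line bundles on $G$ have no intermediate cohomology. The paper isolates precisely this ingredient as a separate lemma (via Bott's formula for Grassmannians, i.e.\ Snow's tables), then breaks the complex into short exact sequences and propagates the vanishing through the kernels $\Ker\varphi_j$ down to $\sI_Z$. Your sentence ``line bundles on $G$ have no intermediate cohomology \ldots; the dictionary then forces the intermediate groups $H^i(\sI_Z(t))$ to vanish'' bypasses this mechanism.

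For the converse you are on the same track as the paper, and you correctly flag the asymmetry between the aCM condition (vanishing only against $\sO_G(t)$) and Ottaviani's criterion (vanishing against the full test family). Two refinements are worth making. First, Ottaviani's test bundles are iterated wedge powers $\bigwedge^{i_1}\sQ^\vee\otimes\cdots\otimes\bigwedge^{i_s}\sQ^\vee$, hence Schur functors of $\sQ^\vee$ alone; your family $\Sigma^\lambda\sS\otimes\Sigma^\mu\sQ$ is broader than required. Second, the paper carries out the converse only in codimension one, and to control $H^{i-1}\big(\bigwedge^{i_1}\sQ^\vee\otimes\cdots\otimes\bigwedge^{i_s}\sQ^\vee\otimes\sI_Z(t+r)\big)$ it does not rely on aCM alone: it resolves $\sI_Z$ by the Koszul complex of a section of $\sQ$, which is exactly what the pure-codimension-$(n-k)$ hypothesis buys (the zero locus of a regular section of $\sQ$ has codimension $\rank\sQ=n-k$). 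So that hypothesis is doing more than bounding the length of a resolution; it is what makes the $\sQ^\vee$-twisted cohomology of $\sI_Z$ computable, with the remaining exceptional degrees absorbed into the explicit ``certain conditions''.
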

	
	Furthermore, O. Calvo-Andrade, M. Corrêa, and M. Jardim showed in \cite{MOJ} a cohomological criterion for the connectedness of the singular scheme of codimension one distributions on $\p^3.$ This criterion was extended for the other Fano threefolds with Picard number one in \cite{AMS}. We observed that the singular locus of a codimension one distribution the Grassmannian $Gr(2,3)$ is connected. Recovering the result of \cite{MOJ} to $\p^3,$ the precise statement is as follows.
	
	\begin{thm}
		Let $\sF$ be a distribution of codimension one on $Gr(2,3)$ and singular scheme $Z.$ If $h^2(T_\sF(-r))=0$ and the subscheme $C \subset X, \; C  \neq \emptyset,$ then $Z$ is connected and of pure dimension 1, so that $T_\sF$ is locally free.
	\end{thm}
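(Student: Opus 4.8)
The plan is to exploit the identification $Gr(2,3)\cong\p^3$: the variety of $2$-planes (equivalently, hyperplanes) in $\p^3$ is the dual projective space, so $X:=Gr(2,3)$ is a smooth Fano threefold with $\Pic(X)=\Z$ and $\omega_X=\sO_X(-4)$, and the statement becomes the one for $\p^3$, which is precisely why it recovers \cite{MOJ}. I would start from the defining sequence of the codimension one distribution,
\[
0\longrightarrow T_\sF\longrightarrow T_X\longrightarrow N_\sF\longrightarrow 0,\qquad N_\sF=\sI_Z(r),
\]
with $Z=\Sing(\sF)$. As $N_\sF=\sI_Z(r)$ is torsion-free and $T_X$ is locally free, the kernel $T_\sF$ is reflexive of rank two, hence locally free on the smooth threefold $X$ outside a finite set; the usual dictionary for rank two reflexive sheaves identifies that set with the locus where $\sI_Z(r)$ fails to have projective dimension one, i.e. with the zero-dimensional components and the embedded points of $Z$. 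By Hilbert--Burch this makes ``$T_\sF$ locally free'' equivalent to ``$Z$ Cohen--Macaulay of pure dimension one''. So the final clause is not independent, and the theorem reduces to proving that $Z$ is connected and Cohen--Macaulay of pure dimension one.

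First I would prove connectedness. Since $C\subseteq Z$ is nonempty, $Z\neq\emptyset$ and thus $H^0(\sI_Z)=0$; feeding $0\to\sI_Z\to\sO_X\to\sO_Z\to 0$ into cohomology and using $H^0(\sO_X)=\C$, $H^1(\sO_X)=0$ gives $h^0(\sO_Z)=1+h^1(\sI_Z)$. A disconnected scheme satisfies $h^0(\sO_Z)\geq 2$, so it suffices to show $h^1(\sI_Z)=0$. Twisting the defining sequence by $\sO_X(-r)$ produces
\[
0\longrightarrow T_\sF(-r)\longrightarrow T_X(-r)\longrightarrow \sI_Z\longrightarrow 0,
\]
whose cohomology contains $H^1(T_X(-r))\to H^1(\sI_Z)\to H^2(T_\sF(-r))$. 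The right-hand group vanishes by hypothesis, and the left-hand group vanishes for every $r$ by the Borel--Weil--Bott theorem, since in the Euler sequence for $T_X$ one has $H^1(\sO_{\p^3}(k))=H^2(\sO_{\p^3}(k))=0$ for all $k$. Hence $h^1(\sI_Z)=0$, so $h^0(\sO_Z)=1$ and $Z$ is connected.

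The remaining point, purity, is where I expect the real difficulty to lie. Connectedness together with $C\neq\emptyset$ already excludes zero-dimensional \emph{components}: an isolated point would be a connected component disjoint from the curve $C$, contradicting connectedness, so $\supp Z=\supp C$ is purely one-dimensional. It remains to exclude \emph{embedded} points, i.e. to show that the zero-dimensional sheaf $\inext^1(T_\sF,\sO_X)\cong\inext^2(\sI_Z(r),\sO_X)$ vanishes. The natural attempt is to dualise: writing $a:=r-4$, Serre duality turns the hypothesis into $\Ext^1(T_\sF,\sO_X(a))=0$, and the local-to-global Ext spectral sequence yields
\[
0\to H^1(T_\sF^{\vee}(a))\to\Ext^1(T_\sF,\sO_X(a))\to H^0\!\big(\inext^1(T_\sF,\sO_X(a))\big)\xrightarrow{\,d_2\,}H^2(T_\sF^{\vee}(a)).
\]
The hard part will be that the hypothesis only forces $d_2$ to be \emph{injective} on the local term, and so does not by itself kill the embedded points; one must additionally supply the vanishing $H^2(T_\sF^{\vee}(a))=0$, which has to come from the special geometry of $X\cong\p^3$ (a Bott-type analysis, now of the reflexive sheaf $T_\sF^{\vee}$, exploiting that $Z$ is the saturated zero scheme of the twisted one-form $\omega\in H^0(\Omega^1_X(r))$). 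Once this local obstruction is shown to vanish, $Z$ is Cohen--Macaulay of pure dimension one and, by Hilbert--Burch, $T_\sF$ is locally free, completing the proof.
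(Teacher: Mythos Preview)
Your connectedness argument is correct and matches the paper: twist the defining sequence by $\sO_X(-r)$, use the vanishing $H^1(T_X(-r))=0$ together with the hypothesis $h^2(T_\sF(-r))=0$ to get $h^1(\sI_Z)=0$, and then conclude $h^0(\sO_Z)=1$ from the ideal sequence.

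The gap is in your purity argument. You separate the problem into ``isolated $0$-dimensional components'' (handled by connectedness) and ``embedded points'' (attacked via the local-to-global Ext spectral sequence), and you correctly observe that your spectral-sequence step does not close: you would still need $H^2(T_\sF^{\vee}(r-4))=0$, which you have no way to extract from the hypotheses. This extra vanishing is not available in general, so the approach as written does not terminate.

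The paper avoids this difficulty entirely by working instead with the maximal $0$-dimensional \emph{subsheaf} $\mathcal{U}\subset\sO_Z$, which sits in the exact sequence
\[
0\longrightarrow\mathcal{U}\longrightarrow\sO_Z\longrightarrow\sO_C\longrightarrow 0
\]
(with $C\subset Z$ the pure $1$-dimensional part). The point is that $\mathcal{U}$ records \emph{both} the isolated points \emph{and} the embedded points of $Z$ simultaneously. Since $\mathcal{U}$ has $0$-dimensional support, $H^1(\mathcal{U})=0$ and $h^0(\mathcal{U})=\mathrm{length}(\mathcal{U})$; taking $H^0$ gives $h^0(\mathcal{U})+h^0(\sO_C)=h^0(\sO_Z)=1$. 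The hypothesis $C\neq\emptyset$ forces $h^0(\sO_C)\geq 1$, hence $h^0(\mathcal{U})=0$, hence $\mathcal{U}=0$. Thus $Z=C$ is of pure dimension one in a single stroke, with no spectral sequence and no auxiliary cohomological vanishing required, and then Lemma~\ref{local-free-tg-sheaf} gives that $T_\sF$ is locally free.
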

	
		We dedicate the first part of Section \ref{quadric} to extending the result of \cite {GP} for quadric hypersurfaces of hight dimension and obtain the following result. 
	
	\begin{thm}\label{thm1}
		Let $\sF$ be a distribution of codimension one on $Q_{n}$, such that the tangent sheaf $T_{\sF}$ is locally free. If $Z =\Sing(\sF)$ is the singular 
		scheme of $\sF$, then:
		
		\begin{itemize}
			\item [(i)] If $T_{\sF}$ is a direct sum of line bundles and spinor bundle twisted by some $\mathcal{O}_{Q_{n}}(t)$, then $Z$ is arithmetically Buchsbaum, with $h^1(Q_{n}, \mathcal{I}_{Z} (r-2)) = 1$ being the only nonzero intermediate cohomology for $H^i(Q_{n},\mathcal{I}_{Z}).$
			
			\item [(ii)] If $n>3,$ and $Z$ is arithmetically Buchsbaum with $h^{1}(Q_{n}, \mathcal{I}_{Z}(r-2)) =1$ being the only nonzero intermediate cohomology for
			$H^{i}(Q_{n}, \mathcal{I}_{Z})$, and $h^{2}(Q_{n}, T_{\sF}(-2)) =  h^{n-1}(Q_{n}, T_{\sF}(-n)) = h^{n}(Q_{n},T_{\F}(-r-n))= 0$, then $T_{\sF}$ has no intermediate cohomology, i.e., $T_{\sF} = \oplus \mathcal{O}_{Q_{n}}(t_{i})\oplus S(t)$, where $S$ is a spinor bundle on $Q_{n}.$
			
		\end{itemize}

	\end{thm}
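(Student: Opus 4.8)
The plan is to read the codimension one distribution $\sF$ through its fundamental exact sequence
\[
0 \to T_\sF \to T_{Q_n} \xrightarrow{\;\omega\;} \mathcal{I}_Z(r) \to 0, \qquad (\star)
\]
where $\omega$ is the twisted $1$-form defining $\sF$, $r$ its degree and $Z=\Sing(\sF)$ its zero scheme; twisting $(\star)$ by $\mathcal{O}_{Q_n}(s)$ and running the long exact cohomology sequence is the engine of both implications. Two cohomological inputs drive everything. First, by the classification of ACM bundles on a smooth quadric underlying Ottaviani's criterion \cite{Ott2} (Kn\"{o}rrer), both $\mathcal{O}_{Q_n}(t)$ and the twisted spinor bundles $S(t)$ have \emph{no} intermediate cohomology, so any $\bigoplus \mathcal{O}_{Q_n}(t_i)\oplus S(t)$ satisfies $H^i(E(s))=0$ for $1\le i\le n-1$ and all $s$. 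Second, I would compute the intermediate cohomology of $T_{Q_n}$ from the Euler sequence together with $0\to T_{Q_n}\to T_{\p^{n+1}}|_{Q_n}\to \mathcal{O}_{Q_n}(2)\to 0$: in the range $1\le i\le n-2$ the only nonzero group is $H^1(Q_n,T_{Q_n}(-2))\cong H^0(\mathcal{O}_{Q_n})=\C$, while at the top boundary one meets the extra class $H^{n-1}(Q_n,T_{Q_n}(-n))\cong H^1(Q_n,\Omega^1_{Q_n})^{*}=\C$. This single defect at twist $-2$ is exactly what obstructs ACM-ness of $Z$ and produces the prescribed Buchsbaum behaviour.

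For (i), assume $T_\sF=\bigoplus \mathcal{O}_{Q_n}(t_i)\oplus S(t)$, so $H^i(T_\sF(s))=0$ for $1\le i\le n-1$. The long exact sequence of $(\star)(s)$ then yields isomorphisms $H^i(\mathcal{I}_Z(r+s))\cong H^i(T_{Q_n}(s))$ throughout the intermediate range $1\le i\le n-2$. Feeding in the computation above, every intermediate group of $\mathcal{I}_Z$ vanishes except $H^1(Q_n,\mathcal{I}_Z(r-2))\cong H^1(Q_n,T_{Q_n}(-2))=\C$. Since this lone nonzero intermediate module is concentrated in a single degree, it is automatically annihilated by the irrelevant ideal, so $Z$ is arithmetically Buchsbaum with precisely the stated intermediate cohomology.

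For (ii) I would reverse the chase. In the bulk range $2\le i\le n-2$ the groups $H^i(T_{Q_n}(s))$ and $H^{i-1}(\mathcal{I}_Z(r+s))$ both vanish by the two inputs and the Buchsbaum hypothesis on $Z$, so $(\star)(s)$ forces $H^i(T_\sF(s))=0$ for all such $(i,s)$ except the slot $(i,s)=(2,-2)$, where the defect $H^1(\mathcal{I}_Z(r-2))=\C$ meets $H^1(T_{Q_n}(-2))=\C$; there the hypothesis $h^2(T_\sF(-2))=0$ forces the comparison map $H^1(T_{Q_n}(-2))\to H^1(\mathcal{I}_Z(r-2))$ to be an isomorphism and kills $H^2(T_\sF(-2))$. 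The two boundary degrees are handled by the remaining hypotheses read through Serre duality ($\omega_{Q_n}=\mathcal{O}_{Q_n}(-n)$): $h^{n-1}(T_\sF(-n))=0$ says $H^1(T_\sF^{\vee})=0$ and disposes of the second defect $H^{n-1}(T_{Q_n}(-n))$, while $h^{n}(T_\sF(-r-n))=0$ says $H^0(T_\sF^{\vee}(r))=0$ and controls the dual conormal sequence $0\to \mathcal{O}_{Q_n}(-r)\to \Omega^1_{Q_n}\to T_\sF^{\vee}\to \Exts^1(\mathcal{I}_Z(r),\mathcal{O}_{Q_n})\to 0$. Once $H^i(T_\sF(s))=0$ is established for $1\le i\le n-1$ and all $s$, Ottaviani's splitting criterion \cite{Ott2} gives $T_\sF=\bigoplus \mathcal{O}_{Q_n}(t_i)\oplus S(t)$.

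The main obstacle is the degree $i=1$ (equivalently, via Serre duality, $i=n-1$ for $T_\sF^{\vee}$): here the long exact sequence only gives $H^1(T_\sF(s))=\coker\big(H^0(T_{Q_n}(s))\xrightarrow{\omega} H^0(\mathcal{I}_Z(r+s))\big)$ after accounting for the map into $H^1(T_{Q_n}(s))$, so the real content is surjectivity of the contraction $\omega$ on sections for \emph{every} $s$. Large twists are covered by Serre vanishing and very negative twists are empty; the finitely many intermediate twists are exactly where the Buchsbaum structure of $Z$ and the hypothesis $H^0(T_\sF^{\vee}(r))=0$ must be combined to force surjectivity. This is also where $n>3$ enters: for $n=3$ the bulk range $2\le i\le n-2$ is empty and the two defects (at $i=1$, twist $-2$, and at $i=n-1$, twist $-n$) collide with the boundary, so the clean separation of the problematic slots—and hence the bookkeeping above—breaks down.
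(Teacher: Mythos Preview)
Your argument for part~(i) is correct and is exactly the paper's: twist the distribution sequence, use that an aCM bundle has no intermediate cohomology so that $H^i(\mathcal{I}_Z(r+t))\cong H^i(TQ_n(t))$ for $1\le i\le n-2$, and read off from Bott's formula for quadrics that the only survivor is $h^1(TQ_n(-2))=1$.

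For part~(ii) your handling of the range $2\le i\le n-2$ and of the slot $i=n-1$ also matches the paper. The genuine gap is at $i=1$. You rightly flag it as the main obstacle, but your proposed route --- forcing surjectivity of $H^0(TQ_n(s))\to H^0(\mathcal{I}_Z(r+s))$ at the finitely many intermediate twists via the dual conormal sequence and the vanishing $H^0(T_\sF^\vee(r))=0$ --- is not a workable plan as stated: that dual sequence carries an $\mathcal{E}xt^1$ term supported on $Z$ and says nothing direct about surjectivity on global sections, while the Buchsbaum hypothesis controls $H^{\ge 1}(\mathcal{I}_Z)$, not $H^0$. ``Serre vanishing for large $s$'' does not close a finite window either.

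The paper's device here is Castelnuovo--Mumford regularity. The hypothesis $h^n(T_\sF(-r-n))=0$ (your $H^0(T_\sF^\vee(r))=0$) is exactly the top condition needed to prove that $T_\sF$ is $(-r)$-regular: for $1\le i\le n-1$ one has $H^{i-1}(\mathcal{I}_Z(-i))=0$ (from the aB hypothesis when $i\ge 2$, and from $H^0(\mathcal{O}_{Q_n}(-1))=0$ when $i=1$) and $H^i(TQ_n(-r-i))=0$ (Bott/Flenner), so the long exact sequence gives $H^i(T_\sF(-r-i))=0$. Mumford's theorem then propagates this to $H^1(T_\sF(s))=0$ for all $s\ge -r$, which combines with the elementary $H^0(\mathcal{I}_Z(r+s))=0$ for $s<-r$ to cover every $s\neq -2$; the twist $s=-2$ is then the one-dimensional comparison you already described. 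So you had the right hypothesis in hand but not the right tool: replace the dual-sequence idea by a regularity argument and the gap closes.
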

	
	Note that \cite[Theorem 3.2]{AMS} corresponds to the case $n =3$ for item $(i).$ In addition, we show when the tangent sheaf of a distribution is only split, obtaining a characterization of split distributions in terms of the parities of the quadric.
	
	\begin{Proposition}
		With the same statements of Theorem \ref{thm1} item $(ii)$, we have the following: If $n$ is even, and $H^{n-1}(Q_n, T_{\sF} \otimes S(t)) = 0$ for $t \leq -(n-1),$ then $T_{\sF}$ splits.
	\end{Proposition}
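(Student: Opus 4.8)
The plan is to start from the structural conclusion already available in Theorem \ref{thm1}(ii): under those hypotheses $T_{\sF}$ has no intermediate cohomology and decomposes as a sum of line bundles and one spinor summand. To avoid a clash with the running twist in the present statement, I relabel this decomposition as
\[
T_{\sF}\ \cong\ \bigoplus_{i}\mathcal{O}_{Q_{n}}(a_{i})\ \oplus\ S(b),
\]
for suitable integers $a_{i},b\in\Z$, where $S$ is the spinor bundle. Saying that $T_{\sF}$ \emph{splits} means exactly that the summand $S(b)$ is absent, so the whole problem reduces to forcing the spinor part to vanish. The idea is to detect this summand through the one cohomological invariant that separates a spinor bundle from a line bundle, namely the cohomology of the twisted tensor product with $S$, and to show that the hypothesis is incompatible with the presence of $S(b)$.

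First I would tensor the decomposition by $S$ and twist, obtaining
\[
T_{\sF}\otimes S(t)\ \cong\ \bigoplus_{i}S(a_{i}+t)\ \oplus\ (S\otimes S)(b+t).
\]
Since the spinor bundle on $Q_{n}$ is arithmetically Cohen--Macaulay, $H^{j}(Q_{n},S(k))=0$ for $0<j<n$ and all $k$, every summand $S(a_{i}+t)$ is invisible in degree $n-1$. Hence the hypothesis collapses to the self-tensor term,
\[
H^{n-1}\bigl(Q_{n},T_{\sF}\otimes S(t)\bigr)\ =\ H^{n-1}\bigl(Q_{n},(S\otimes S)(b+t)\bigr),
\]
so that the assumption ``$H^{n-1}(Q_{n},T_{\sF}\otimes S(t))=0$ for $t\le-(n-1)$'' becomes the statement $H^{n-1}\bigl((S\otimes S)(u)\bigr)=0$ for every $u\le b-(n-1)$.

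The heart of the argument is then a direct analysis of the intermediate cohomology of $S\otimes S$ on the even-dimensional quadric. Using the self-duality $S^{\vee}\cong S(1)$ valid for $n$ even (up to interchanging the two spinor bundles according to $n\bmod 4$), together with the classical decomposition of a tensor product of spinor bundles into line bundles and twisted bundles of differential forms $\Omega^{p}_{Q_{n}}(\,\cdot\,)$, one isolates the summand of type $\Omega^{n-1}_{Q_{n}}(\,\cdot\,)$. Its $(n-1)$-st cohomology is nonzero in exactly one twist, the nonvanishing being guaranteed by the Hodge decomposition of the quadric, where $h^{n-1,n-1}(Q_{n})\ne 0$ for $n\ge 4$ (here the bound $n>3$ from Theorem \ref{thm1}(ii) is used, since $n-1\ne n/2$). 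The even parity is essential precisely at this point, as it fixes both the duality and the shape of the decomposition. The twist $u_{0}$ at which this cohomology is nonzero is aligned with the boundary $-(n-1)$ of the hypothesis range, which is exactly why that range is the natural one; one checks the inequality $u_{0}\le b-(n-1)$, so $u_{0}$ is covered by the assumption. The resulting nonvanishing $H^{n-1}\bigl((S\otimes S)(u_{0})\bigr)\ne 0$ then contradicts the hypothesis, showing that the summand $S(b)$ cannot occur and hence $T_{\sF}\cong\bigoplus_{i}\mathcal{O}_{Q_{n}}(a_{i})$ splits.

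I expect the main obstacle to be this last computation: pinning down the exact decomposition of $S\otimes S$, locating the twist $u_{0}$ where $H^{n-1}$ is nonzero, and reconciling it with the value of the spinor twist $b$ produced by Theorem \ref{thm1}(ii) so that the inequality $u_{0}\le b-(n-1)$ holds. Everything else, namely the reduction through the arithmetically Cohen--Macaulay property of $S$ and the observation that only the self-tensor $S\otimes S$ contributes in degree $n-1$, is formal.
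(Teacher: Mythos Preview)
Your route is genuinely different from the paper's, and the step you yourself flag as the obstacle is a real gap, not just a computation to be filled in.

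The paper does not argue by contradiction from the decomposition $T_{\sF}\cong\bigoplus\mathcal{O}_{Q_n}(a_i)\oplus S(b)$. Instead it aims directly at Ottaviani's splitting criterion (Theorem~\ref{criterium}): since Theorem~\ref{thm1}(ii) already gives $H^i_{\ast}(T_{\sF})=0$ for $0<i<n$, it remains to show $H^{n-1}_{\ast}(T_{\sF}\otimes S)=0$. The hypothesis handles $t\le -(n-1)$. For $t>-(n-1)$ the paper tensors the distribution sequence $0\to T_{\sF}\to TQ_n\to\mathcal{I}_Z(r)\to 0$ by $S(t)$ and kills the two neighbouring terms: $H^{n-2}(\mathcal{I}_Z\otimes S(r+t))=0$ comes from the linear resolution of $S$ together with the arithmetically Buchsbaum property of $Z$, and $H^{n-1}(TQ_n\otimes S(t))=0$ for $t>-(n-1)$ is a Kodaira--type vanishing of Laytimi--Nagaraj. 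This closes the criterion without ever needing to know the twist $b$.

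Your argument, by contrast, hinges on the inequality $u_0\le b-(n-1)$, and this is exactly what you cannot verify. The nonvanishing locus of $H^{n-1}\bigl((S\otimes S)(u)\bigr)$ is a fixed finite set of integers depending only on $n$; for $u$ sufficiently negative this cohomology vanishes (Serre duality turns it into $H^1$ of a high positive twist). Nothing in the hypotheses of Theorem~\ref{thm1}(ii) bounds $b$ from below: stability of $TQ_n$ only gives an \emph{upper} bound on $b$ via $\mu(S(b))<\mu(TQ_n)$. So for $b$ negative enough, every $u\le b-(n-1)$ lies outside the nonvanishing window of $H^{n-1}(S\otimes S(\cdot))$, and your contradiction evaporates. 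The information you are missing is precisely the vanishing for $t>-(n-1)$ that the paper extracts from the distribution exact sequence; once you have that, the detour through the decomposition of $S\otimes S$ becomes unnecessary and Ottaviani's criterion applies directly.
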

	
	Moreover, we analyze the relationship between the tangent sheaf of distributions of dimension $2$, and $3$ on certain quadric hypersurfaces, and their singular scheme $Z.$ 
	
	\begin{thm} Let $\sF$ be a holomorphic distribution on $Q_{n}$ whose its singular locus $Z$ has pure expected dimension. Then, the following hold:
		
		\begin{itemize}
			\item [(i)] If $\sF$ has dimension two, $n$ is even and $n \geq 4$ and $T_{\sF}$ splits, then $Z$ is aCM.
			\item [(ii)] If $\sF$ has dimension three, $n=5$ and $T_{\sF}$ splits, then  $h^{2}(Q_{5}, \mathcal{I}_{Z}(1-a_{1}-a_{2}-a_{3}))=1$  being the only nonzero
			intermediate cohomology for $H^{i}(Q_{5},\mathcal{I}_{Z}).$
		\end{itemize}
	\end{thm}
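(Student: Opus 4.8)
The plan is to exploit the fact that the singular scheme arises as the degeneracy locus of the bundle morphism defining the distribution, and to resolve it explicitly. Write the defining sequence
$$0 \to T_{\sF} \xrightarrow{\ \phi\ } T_{Q_{n}} \to N_{\sF} \to 0,$$
and recall that, by hypothesis, $T_{\sF}$ splits, say $T_{\sF}=\bigoplus_{i=1}^{d}\mathcal{O}_{Q_{n}}(a_{i})$ with $d=\dim\sF$. The singular scheme $Z$ is precisely the locus where the map $\phi$ of bundles of ranks $d$ and $n$ fails to be injective, i.e. the locus where its maximal minors vanish, and its expected codimension is $n-d+1$. Since we assume $Z$ has pure expected dimension, this codimension is attained, so the \emph{Eagon--Northcott complex} associated to $\phi$ is exact and furnishes a finite locally free resolution of $\mathcal{O}_{Z}$. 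Its terms are of the shape $\Sym^{k}T_{\sF}\otimes\wedge^{d+k}T_{Q_{n}}\otimes(\det T_{\sF})^{-1}$ (up to the precise indexing of the complex), and I would record the short exact sequences obtained by breaking it into its syzygy pieces.

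The point of the splitting hypothesis is that every symmetric power $\Sym^{k}T_{\sF}$ is again a direct sum of line bundles $\mathcal{O}_{Q_{n}}(\ast)$, and line bundles on the quadric have no intermediate cohomology because $Q_{n}$ is a hypersurface (hence arithmetically Cohen--Macaulay) in $\p^{n+1}$. Therefore the only source of intermediate cohomology in the terms of the Eagon--Northcott complex is the exterior powers $\wedge^{p}T_{Q_{n}}$. Using the duality $\wedge^{p}T_{Q_{n}}\cong\Omega^{\,n-p}_{Q_{n}}(n)$ together with the Euler and normal sequences
$$0\to\mathcal{O}_{Q_{n}}\to\mathcal{O}_{Q_{n}}(1)^{\oplus(n+2)}\to T_{\p^{n+1}}|_{Q_{n}}\to0,$$
$$0\to T_{Q_{n}}\to T_{\p^{n+1}}|_{Q_{n}}\to\mathcal{O}_{Q_{n}}(2)\to0,$$
I would compute the twisted Hodge cohomology $H^{q}(Q_{n},\Omega^{p}_{Q_{n}}(j))$ that can survive. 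On quadrics this cohomology is extremely sparse, and the only classes that do not vanish feed into a single cohomological position of $\mathcal{I}_{Z}$.

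Chasing cohomology through the short exact sequences coming from the resolution then yields the two statements. For (i), with $d=2$ and $Z$ a curve, being arithmetically Cohen--Macaulay amounts to $H^{1}(Q_{n},\mathcal{I}_{Z}(t))=0$ for all $t$; the parity hypothesis ``$n$ even'' is exactly what forces the relevant middle Hodge groups of $Q_{n}$ to vanish, so that no intermediate class reaches $H^{1}(\mathcal{I}_{Z})$ and $Z$ is aCM. For (ii), with $d=3$ and $n=5$, $Z$ is a surface and the same bookkeeping shows that $H^{1}(Q_{5},\mathcal{I}_{Z}(t))=0$ for all $t$ while $H^{2}(Q_{5},\mathcal{I}_{Z}(t))$ vanishes for every $t$ except a single value; the surviving one-dimensional class, located at the twist dictated by $\det T_{\sF}=\mathcal{O}_{Q_{5}}(a_{1}+a_{2}+a_{3})$, gives $h^{2}(Q_{5},\mathcal{I}_{Z}(1-a_{1}-a_{2}-a_{3}))=1$.

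The main obstacle I anticipate is the cohomological bookkeeping through the Eagon--Northcott complex: one must track the twisted exterior powers $\wedge^{p}T_{Q_{n}}\cong\Omega^{\,n-p}_{Q_{n}}(n)$ across every syzygy step and identify exactly which Hodge class of the quadric survives. This is precisely where the hypotheses enter in a delicate way --- the parity of $n$ in (i) and the special value $n=5$ in (ii) are what make the intermediate cohomology of the quadric collapse to nothing (resp.\ to a single one-dimensional space) --- and verifying this vanishing, rather than setting up the resolution, is the technical heart of the argument.
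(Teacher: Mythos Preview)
Your proposal is correct and follows essentially the same route as the paper: resolve $\mathcal{I}_{Z}$ by the Eagon--Northcott complex attached to the distribution, use the splitting of $T_{\sF}$ to reduce all terms to twists of $\Omega^{p}_{Q_{n}}$, and then chase the sparse Hodge cohomology of the quadric through the syzygy pieces. The paper sets up the complex for the dual surjection $\eta^{*}\colon\Omega^{1}_{Q_{n}}\to T_{\sF}^{\vee}$ (so the terms appear directly as $\Omega^{j}_{Q_{n}}\otimes S_{j-d}(T_{\sF})\otimes\det T_{\sF}$ rather than your $\wedge^{d+k}T_{Q_{n}}$ version), and for the vanishing it simply cites Flenner's theorem and Bott's formula for quadrics instead of rederiving them from the Euler and conormal sequences, but this is a difference of presentation rather than of strategy; your reading of why ``$n$ even'' and ``$n=5$'' enter is exactly the one the paper uses.
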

	
	In the second part of Section \ref{quadric}, our attention will be directed to the conormal sheaf $N^{\vee}_{\sF}$ of a one-dimensional holomorphic distribution on $Q_n.$ A. Cavalcante, M. Corrêa, and S. Marchesi showed in \cite{AMS} the relations between algebro-geometric properties of the conormal sheaf of holomorphic distributions on a smooth weighted projective complete intersection Fano threefold and their singular set. We obtain the following by extending the result to $Q_n.$
	
	\begin{thm}\label{conormal thm}
		Let $\sF$ be a distribution of dimension one on $Q_{n}.$ If $Z =\Sing(\sF)$ is the singular scheme of $\sF,$ then:
		\begin{itemize}
			\item [(i)]  If the conormal sheaf $N_{\sF}^{\vee}$ is aCM, then $Z$ is arithmetically Buchsbaum with $h^{1}(Q_{n}, {\mathcal I}_{Z}(r)) =1$ being the only nonzero intermediate cohomology.
			\item [(ii)]  If $n>3$ and $Z$ is arithmetically Buchbaum with $h^{1}(Q_{n}, {\mathcal I}_{Z}(r)) = 1$ being the only nonzero intermediate cohomology for $H^i(Q_{n},\mathcal{I}_Z)$, and $h^{2}(Q_{n}, N_{\sF}^{\vee}) = h^{n-1}(Q_{n}, N_{\sF}^{\vee}(-n+2)) = h^{n-1}(Q_{n},N_{\sF}^{\vee}(-r-n+1))= h^{n}(Q_{n},N_{\sF}^{\vee}(-r-n))=0,$ then $N_{\sF}^{\vee}$ is aCM.
		\end{itemize}
		
	\end{thm}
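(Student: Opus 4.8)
The backbone of the argument is the conormal exact sequence attached to $\sF$. Writing the tangent sheaf of the one-dimensional distribution as a saturated line subbundle $T_{\sF} = \mathcal{O}_{Q_n}(-r) \hookrightarrow T_{Q_n}$ and dualizing $0 \to T_{\sF} \to T_{Q_n} \to N_{\sF} \to 0$, the image of $\Omega^1_{Q_n} \to T_{\sF}^{\vee} = \mathcal{O}_{Q_n}(r)$ is the twisted ideal $\mathcal{I}_Z(r)$ of the singular scheme, the kernel is $N_{\sF}^{\vee}$, and the cokernel is $\Exts^{1}(N_{\sF}, \mathcal{O}_{Q_n})$, supported on $Z$. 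I would therefore start from
\[
0 \to N_{\sF}^{\vee} \to \Omega^1_{Q_n} \to \mathcal{I}_Z(r) \to 0,
\]
and read every comparison between $N_{\sF}^{\vee}$ and $\mathcal{I}_Z$ off the long exact sequence obtained by twisting this by $\mathcal{O}_{Q_n}(t)$ and passing to cohomology. This is the exact analogue, on $Q_n$, of the sequence used on Fano threefolds in \cite{AMS}, to which the present statement reduces when $n=3$.

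The second ingredient is a complete description of the middle term. From the conormal sequence $0 \to \mathcal{O}_{Q_n}(-2) \to \Omega^1_{\p^{n+1}}|_{Q_n} \to \Omega^1_{Q_n} \to 0$, the restricted Euler sequence, and the vanishing $H^i(\mathcal{O}_{Q_n}(s)) = 0$ for $0 < i < n$, I would establish that the only nonzero intermediate cohomology of $\Omega^1_{Q_n}$ consists of $H^1(Q_n, \Omega^1_{Q_n}) \cong \mathbb{C}$ in degree zero together with $H^{n-1}(Q_n, \Omega^1_{Q_n}(2-n)) \cong \mathbb{C}$, the latter being Serre dual to $H^1(Q_n, T_{Q_n}(-2)) \cong \mathbb{C}$. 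This is the decisive structural fact: unlike $\mathcal{O}_{Q_n}$, the cotangent bundle is not arithmetically Cohen--Macaulay, and the two classes above, sitting in degrees $0$ and $2-n$, are exactly what the numerology of the hypotheses is built around.

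For item (i) I would feed the assumption that $N_{\sF}^{\vee}$ is aCM into the long exact sequence: the vanishing of $H^i_{*}(N_{\sF}^{\vee})$ for $1 \le i \le n-1$ produces isomorphisms $H^i(Q_n, \mathcal{I}_Z(r+t)) \cong H^i(Q_n, \Omega^1_{Q_n}(t))$ for $1 \le i \le n-2$, so the concentration recorded above is transported to $\mathcal{I}_Z$. The degree-one class yields $h^1(Q_n, \mathcal{I}_Z(r)) = 1$, and $\Omega^1_{Q_n}$ carries no intermediate cohomology in degrees $2 \le i \le n-2$, so $\mathcal{I}_Z$ has none there either. Since the surviving module is one-dimensional and lives in a single internal degree, it is annihilated by the irrelevant ideal, whence $Z$ is arithmetically Buchsbaum with precisely the asserted cohomology. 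The one place needing genuine care is the top index $i = n-1$, where the class $H^{n-1}(\Omega^1_{Q_n}(2-n))$ must be accounted for; I would handle it by Serre duality on $Q_n$, using $\omega_{Q_n} = \mathcal{O}_{Q_n}(-n)$.

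Item (ii) is the converse and carries the real difficulty. Given $Z$ arithmetically Buchsbaum with its single class, the long exact sequence already yields $H^i(Q_n, N_{\sF}^{\vee}(t)) = 0$ for $3 \le i \le n-2$ and all $t$, both neighbouring terms being zero; the assumption $n>3$ is used precisely to separate the indices $i=2$ and $i=n-1$ so that the two boundary analyses do not interfere. The four vanishing hypotheses are then tailored to those boundaries: $h^2(Q_n, N_{\sF}^{\vee}) = 0$ annihilates the image of the $Z$-class sitting in $H^2$, while the assumptions at the twists $-n+2$, $-r-n+1$ and $-r-n$ are, through Serre duality, exactly the positions dual to the top class of $\Omega^1_{Q_n}$ and to the degree-one cohomology of $Z$, and they are what is needed to clear $H^1$ and $H^{n-1}$ of $N_{\sF}^{\vee}$ in the remaining twists. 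I expect the main obstacle to be exactly this top-degree bookkeeping: because $\Omega^1_{Q_n}$ itself carries the unavoidable class $H^{n-1}(\Omega^1_{Q_n}(2-n)) \cong \mathbb{C}$ and because $N_{\sF}^{\vee}$ is only reflexive rather than locally free, the Serre-duality identifications cannot be applied naively but must be routed through the $\Exts$-sheaves of the conormal sequence, and one has to verify that the relevant connecting homomorphisms are isomorphisms so that the stated hypotheses are exactly sufficient to force $N_{\sF}^{\vee}$ to be aCM.
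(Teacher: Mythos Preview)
Your framework is the right one and matches the paper exactly: the conormal sequence
\[
0 \to N_{\sF}^{\vee}(t) \to \Omega^1_{Q_n}(t) \to \mathcal{I}_Z(r+t) \to 0
\]
together with the Bott--Flenner description of $H^i_{*}(\Omega^1_{Q_n})$ is precisely how both parts are proved, and your treatment of item~(i) is essentially the paper's. (Your worry about $i=n-1$ there is unnecessary: for the ideal sheaf of a codimension-two $Z$, ``intermediate cohomology'' means $1\le i\le n-2$, so the class $H^{n-1}(\Omega^1_{Q_n}(2-n))$ plays no role.)

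For item~(ii), however, you have misidentified where the real difficulty lies, and this leaves a genuine gap. The range $3\le i\le n-2$ and the index $i=n-1$ are in fact easy: from the long exact sequence, $H^{n-1}(N_{\sF}^{\vee}(t))$ sits between $H^{n-2}(\mathcal{I}_Z(r+t))=0$ and $H^{n-1}(\Omega^1_{Q_n}(t))$, the latter vanishing for every $t\neq 2-n$, so the single hypothesis $h^{n-1}(N_{\sF}^{\vee}(-n+2))=0$ suffices there without any $\Exts$-sheaf detour. The delicate index is $i=1$. For $t\ge -r$ the sequence gives
\[
H^0(Q_n,\mathcal{I}_Z(r+t)) \to H^1(Q_n,N_{\sF}^{\vee}(t)) \to H^1(Q_n,\Omega^1_{Q_n}(t))=0,
\]
and $H^0(\mathcal{I}_Z(r+t))$ is \emph{nonzero} for large $t$, so the neighbouring terms do not kill $H^1(N_{\sF}^{\vee}(t))$; your Serre-duality sketch does not address this, since the hypotheses at $-r-n+1$ and $-r-n$ concern single twists and duality alone cannot propagate them.

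The missing device is Castelnuovo--Mumford regularity. The paper first shows (as a separate lemma) that $N_{\sF}^{\vee}$ is $(-r)$-regular: the conditions $H^i(N_{\sF}^{\vee}(-r-i))=0$ for $1\le i\le n-2$ follow from the long exact sequence exactly as above, while the two remaining cases $i=n-1$ and $i=n$ are precisely the hypotheses $h^{n-1}(N_{\sF}^{\vee}(-r-n+1))=0$ and $h^{n}(N_{\sF}^{\vee}(-r-n))=0$. Regularity then propagates upward, yielding $H^1(N_{\sF}^{\vee}(t))=0$ for all $t\ge -r$; combined with the direct argument for $t<-r$ and the hypothesis $h^2(N_{\sF}^{\vee})=0$ at $t=0$, this closes the $H^1$ case. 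So the two ``top'' hypotheses are not there to be dualised into $H^1$ statements --- they are the seed for a regularity argument. Replace your $\Exts$-sheaf plan with this, and the proof goes through.
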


	Note that when $n=3,$ the item $(i)$ this Theorem corresponds to \cite[Theorem 4.1]{AMS} for the case $\iota_X = 3.$

	\hfill
	
	\subsection*{Acknowledgments}  We are grateful to Maurício Corrêa, Alan Muniz, and Yusuke Nakayama for interesting conversations. The authors also thank Laurent Manivel for providing many suggestions that helped improve the text and for encouraging them to extend Theorem \ref{reci_grass}  to general case. The second author is partially supported by the FAPEMIG RED-00133-21.
	
	\hfill

	

	
	\section{Preliminaries} \label{section:preliminaries}
	
	In this Section we gather together some basic facts of the theory.
	
	\subsection{Notations and Conventions}
	Throughout this paper we work over the field $\mathbb{C}$ of complex numbers.
	For denoting the dual of a vector space $V$ we will use $V^{\ast}$ and
	for denoting the dual of a vector bundle $F$, $F^{\vee}.$ We will not distinguish between vector bundle and locally free sheaf.
	Given a complex variety $X$, we  denote by $TX$ the tangent bundle $(\Omega_{X}^1)^{\vee}$ and
	to simplify the notation, given a distribution $\sF$ let us write $Z := \Sing(\sF)$ for its singular scheme.

	Assume that the Picard group of $X$ is $\mathbb{Z}.$ We will denote
	$E(t) = E \otimes_{\mathcal{O}_X} \mathcal{O}_X(t)$ for $t \in \mathbb{Z}$ when $E$ is a vector bundle on $X,$ and
	$\mathcal{O}_X(1)$ denotes its ample generator.
	
	If $F$ is a sheaf on $X,$ we denote by $h^i(X, F)$ the dimension of the complex vector space
	$H^i(X, F).$ As usual, $H_{\ast}^i (X, F) = \displaystyle \bigoplus_{t \in \Z} H^i(X, F(t)),$ for $i=0, \ldots,n.$

		\medskip
		
		\subsection{Holomorphic Distributions}\label{holomorphic distributions}
		
		Let us now recall facts about holomorphic distributions on complex projective varieties.
		For more details about distributions and foliations see \cite{MOJ,AMD,CJV,CMS} and the reference therein.

		\begin{Definition} \label{distributionTF}
			Let $X$ be a smooth complex manifold.
			A \textit{codimension $k$ distribution} $\mathscr{F} $ on $X$ is given by an exact sequence
			\begin{equation}\label{eq:Dist}
				\mathscr{F}:\  0  \to T_{\sF} \stackrel{\phi}{ \to} TX \stackrel{\pi}{ \to} N_{\sF}  \to 0,
			\end{equation}
			where $T_{\sF}$ is a coherent sheaf of rank $r_{\sF}:=\dim(X)-k$, 
			and $N_{\sF}:= TX / \phi (T_{\sF})$ is a torsion free sheaf.
			
			The sheaves $T_{\sF}$ and $N_{\sF}$ are called the \textit{tangent} and the \textit{normal} sheaves of $\sF$, respectively.
			The \textit{singular set} of the distribution $\sF$ is defined by 
			$$\Sing(\sF) = \{x \in X | (N_{\sF})_x  \; \mbox {is not a free} \; \mathcal{O}_{X,x}\mbox{-}\mbox{module}\}.$$
			
			The  \textit{conormal} sheaf  of $\sF$ is $N_{\sF}^{\vee}$.
			
		\end{Definition}

		A distribution $\sF$ is said to be {\it locally free} if $T_{\sF}$ is a locally free sheaf. By definition, $\Sing(\sF)$ is the singular set of the sheaf $N_{\sF}$. It is a closed
		analytic subvariety of $X$ of codimension at least two, since by definition $N_{\sF}$ is torsion free.
		
		\begin{Definition}
			A \textit{foliation} is an integrable distribution, that is a distribution
			$$ \mathscr{F}:\  0  \to T_\sF \stackrel{\phi}{ \to} TX \stackrel{\pi}{ \to} N_{\sF}  \to 0 $$
			whose tangent sheaf is closed under the Lie bracket of vector fields, i.e. $[\phi(T_\sF),\phi(T_\sF)]\subset \phi(T_\sF)$.
		\end{Definition}

		Clearly, every distribution of codimension $\dim(X)-1$ is integrable. From now on, we will consider codimension one distributions on projective variety $X$ with Picard number one.  Thus, sequence (\ref{eq:Dist}) simplifies to
		\begin{equation}\label{eq:Dist cod1}
			\mathscr{F}:\  0  \to T_\sF \stackrel{\phi}{ \to} TX \stackrel{\pi}{ \to}
			\mathcal{I}_{Z/X}(r)  \to 0,
		\end{equation}

		\noindent where $T_\sF$ is a rank $n-1$ reflexive sheaf and $r$ is an integer number such that $r = c_1(TX) - c_1(T_{\sF})$. Observe that   $N_{\sF}=\mathcal{I}_{Z/X}(r)$ where $Z$ is the singular scheme of $\sF$; indeed, $N_{\sF}$ is by definition a torsion free sheaf, which in this case will have rank one, hence an ideal sheaf, which defines the singular locus of the distribution.
		
		A codimension one distribution on $X$ can also be represented by a
		section $\omega \in H^0 (X,\Omega^1_{X}(r))$, given by the dual of the morphism $\pi:TX \to N_{\sF}$. On the other hand, such section induces a sheaf map $\omega :\mathcal{O}_X \to \Omega^1_{X}(r)$. Taking duals, we get a cosection $$\omega^{\ast}:(\Omega^1_{X}(r))^{\ast}=TX(-r)\to \mathcal{O}_X$$
		whose image is the ideal sheaf $I_{Z/X}$ of the singular scheme. The kernel of $\omega^{\ast}$ is the tangent sheaf $\sF$ of the distribution twisted by $\mathcal{O}_{X}(-r)$.
		
		\begin{Remark}
			From this point of view, the integrability condition is equivalent to $\omega \wedge d\omega=0.$
		\end{Remark}

		Let $\mathcal{U}$ be the maximal subsheaf of $\mathcal{O}_{Z/X}$ of codimension $>2$, so that one has an exact sequence of the form
		\begin{equation}\label{OZ->OC}
			0 \to \mathcal{U} \to \mathcal{O}_{Z/X} \to \mathcal{O}_{C/X} \to 0,
		\end{equation}
		where $C\subset X$ is a (possibly empty) subscheme of pure codimension $2.$ For more details, see \cite[section 2.1]{MOJ}, 
		
		The quotient sheaf is the structure sheaf of a subscheme $C\subset Z\subset X$ of pure dimension 1.
		
		\begin{Definition}
			If $Z$ is a 1-dimensional subscheme, then $Z$ has a maximal pure dimension 1
			subscheme $C$ defining a sequence
			\begin{equation} \label{sequence I}
				0 \to \mathcal{I}_{Z} \to \mathcal{I}_{C} \to \mathcal{U} \to 0,
			\end{equation}
			where $\mathcal{U}$ is the maximal 0-dimensional subsheaf of $\mathcal{O}_{Z}.$
		\end{Definition}
		
		We finish this Subsection with the auxiliary result due Calvo-Andrade, Corrêa and Jardim.
		
		\begin{Lemma} \label{local-free-tg-sheaf}\cite[Lemma 2.1]{MOJ}
			The tangent sheaf of a codimension one distribution is locally free if and only if its singular locus has pure codimension $2.$
		\end{Lemma}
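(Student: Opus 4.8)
The plan is to reduce the global statement to a pointwise commutative-algebra computation and then invoke the Auslander--Buchsbaum formula. First I would record that the tangent sheaf $T_\sF$ is always reflexive, being the kernel of the map $\phi$ from the locally free sheaf $TX$ to the torsion-free sheaf $N_\sF=\mathcal I_{Z/X}(r)$; hence $T_\sF$ is locally free away from a closed set of codimension at least three. Moreover it is automatically locally free on $X\setminus Z$, since there the surjection $TX\to\mathcal I_{Z/X}(r)\cong\mathcal O_X(r)$ splits locally. Thus the non-locally-free locus of $T_\sF$ is contained in $Z$, and the whole assertion becomes a local statement at the points of $Z$.

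Next, fix a closed point $p\in Z$ and set $R=\mathcal O_{X,p}$, a regular local ring of dimension $n$. Trivializing $\mathcal O_X(r)$ near $p$ and writing $\omega=\sum a_i\,dx_i$, the cosection $\omega^{\ast}$ identifies the stalk of $T_\sF$ (up to the trivial local twist) with the module of syzygies of $(a_1,\dots,a_n)$, so that there is an exact sequence
\[
0\to (T_\sF)_p\to R^{\,n}\xrightarrow{(a_1,\dots,a_n)} R\to \mathcal O_{Z,p}\to 0 .
\]
Therefore $(T_\sF)_p$ is a second syzygy of $\mathcal O_{Z,p}$, and by the Auslander--Buchsbaum formula it is free if and only if $\operatorname{pd}_R\mathcal O_{Z,p}\le 2$, i.e. $\operatorname{depth}\mathcal O_{Z,p}\ge n-2$. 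Since $Z$ has codimension at least two we always have $\dim\mathcal O_{Z,p}\le n-2$, and $\operatorname{depth}\le\dim$; so $(T_\sF)_p$ is free exactly when $\mathcal O_{Z,p}$ is Cohen--Macaulay of dimension $n-2$. Equivalently, dualizing the defining sequence identifies the obstruction to freeness of the reflexive sheaf $T_\sF$ with the higher sheaves $\Exts^{\,i}(\mathcal O_{Z},\mathcal O_X)$ for $i\ge 3$, whose vanishing is precisely the Cohen--Macaulayness of $Z$ in codimension two.

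With this local dictionary the two implications follow. If $T_\sF$ is locally free, then $\mathcal O_{Z,p}$ is Cohen--Macaulay of dimension $n-2$ at every $p\in Z$; in particular $Z$ is unmixed, with all components of codimension two and no embedded points, so $Z$ has pure codimension two. Conversely, if $Z$ has pure codimension two, then at each $p$ the ring $\mathcal O_{Z,p}$ attains the maximal depth $n-2$, whence the syzygy module $(T_\sF)_p$ is free; together with the reflexivity reduction of the first step this yields local freeness of $T_\sF$ on all of $X$.

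The main obstacle is the backward direction, where one must convert ``pure codimension two'' into the depth equality $\operatorname{depth}\mathcal O_{Z,p}=n-2$ rather than mere equidimensionality of the reduced scheme. For the singular schemes relevant here --- of codimension two, and in the threefold model of \cite{MOJ} of dimension one --- unmixedness of $\mathcal O_Z$ forces $\operatorname{depth}\ge 1$ beyond the generic points and hence the Cohen--Macaulay condition, so the implication is genuine. The care needed lies exactly in verifying that the purity hypothesis delivers this depth bound, equivalently the vanishing of the relevant $\Exts^{\,i}(\mathcal O_Z,\mathcal O_X)$, which is where the codimension-two geometry is essential.
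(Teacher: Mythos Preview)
The paper does not prove this lemma itself; it is quoted from \cite[Lemma~2.1]{MOJ}, whose setting is $\mathbb{P}^3$. Your approach---localizing, identifying $(T_\sF)_p$ with the second syzygy of $\mathcal{O}_{Z,p}$, and applying Auslander--Buchsbaum---is the standard one and is essentially the argument in \cite{MOJ}.

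You are right to isolate the backward direction as the delicate point. In the threefold situation of \cite{MOJ} the singular scheme is one-dimensional, and a one-dimensional scheme without embedded points is automatically Cohen--Macaulay; hence ``pure codimension two'' really does give $\operatorname{depth}\mathcal{O}_{Z,p}=n-2=1$. For $n\ge 4$ the literal statement is too weak: purity (equidimensional, no embedded components) only yields $\operatorname{depth}\ge 1$, whereas freeness of the second syzygy requires $\operatorname{depth}\ge n-2$. The honest equivalence in arbitrary dimension is that $T_\sF$ is locally free if and only if $Z$ is Cohen--Macaulay of pure codimension two. Since the present paper only invokes the lemma on $Gr(2,3)\simeq\mathbb{P}^3$, this does not affect the applications, and your caveat about the scope of the converse is exactly the correct reading.
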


		\medskip
		
		\subsection{The Eagon-Northcott complex}\label{eagon northcott}
		
		Let $X$ be a smooth projective variety, $\cA$ and $\cB$  locally free sheaves on $X$ of rank $a$ and $b,$ respectively, and $\eta : \cA \to \cB$ a generically surjective morphism. The induced map $\wedge^b \eta : \bigwedge^b \cA \to \det(B)$ corresponds to a global section $\omega_{\eta} \in H^0(X, \bigwedge^b \cA^{\vee} \otimes \det(B)).$
		
		\begin{Definition}
			The degeneracy scheme $\Sing(\eta)$ of the map $\eta : \cA \to \cB$ is the zero scheme of the associated global section $\omega_{\eta} \in H^0(X, \bigwedge^b \cA^{\vee} \otimes \det(B)).$
		\end{Definition}
		
		Suppose that $Z = \Sing(\eta)$ has pure codimension equal to $a -b + 1,$ i.e., $Z$ has pure expected dimension. Then the structure sheaf of $Z$ admits a special resolution, called the Eagon-Northcott complex
		(for more details see  \cite[Appendix 2.6.1]{Eisenbud}):
		
		\begin{center}
			\begin{equation*}
				0 \to \bigwedge^a \cA \otimes S_{a-b}(\cB^{\vee}) \otimes \det(\cB^{\vee}) \to \bigwedge^{a-1} \cA \otimes S_{a-b-1}(\cB^{\vee}) \otimes \det(\cB^{\vee}) \to \cdots 
			\end{equation*}
			\begin{equation*}
				\cdots \to  \bigwedge^{b+1} \cA \otimes \cB^{\vee}  \otimes \det(\cB^{\vee}) \to \bigwedge^{b} \cA  \otimes \det(\cB^{\vee}) \to \mathcal{I}_Z
				\to 0.
			\end{equation*}
		\end{center}
		
		\medskip
		\subsection{aCM and aB schemes}
		
		A closed subscheme $Y \subset \mathbb{P}^n$ is \textit{arithmetically Cohen-Macaulay (aCM)} if its
		homogeneous coordinate ring $S(Y) = k[x_0, \ldots, x_n]/I(Y)$ is a Cohen-Macaulay
		ring.

		Equivalently, $Y$ is aCM if $H_{\ast}^p(\mathcal{O}_Y) = 0$ for $1 \leq p \leq \dim(Y)-1$
		and $H_{\ast}^1 (\mathcal{I}_Y) = 0$ (cf. \cite{CaH}).
		From the long exact sequence of cohomology associated to the short exact sequence
\begin{center}
	\begin{equation} \label{ideal sequence}
		0 \to {\mathcal I}_Y \to \mathcal{O}_{X} \to \mathcal{O}_Y \to
		0
	\end{equation}
\end{center}
		one also deduces that $Y$ is aCM if and only if $H_*^p({\mathcal
			I}_Y)=0$ for $1\leq p\leq\dim\,(Y)$.
		
		Similarly, a closed subscheme is \textit{arithmetically
			Buchsbaum (aB)} if its homogeneous coordinate ring is a Buchsbaum ring (see \cite{SV}).
		Clearly, every aCM scheme is arithmetically Buchsbaum, but the
		converse is not true: the union of two disjoint lines is
		arithmetically Buchsbaum, but not aCM.
		
		\medskip
	\subsection{ Castelnuovo-Mumford Regularity}
	
Let $X$ be a projective scheme over a field $k,$ and let $m$ be an
integer.

\begin{Definition}
A coherent sheaf $E$ on $X$ is said to be $m$-regular if
$$H^i(E(m-i)) = 0,$$
for all $i > 0.$
\end{Definition}

The next theorem is in a more general setting in \cite[Lemma 1.8.3]{L}.

\medskip

\begin{thm} \label{CM-regularity}
If $E$ is $m$-regular, then the following holds:
\begin{enumerate}
	\item[1.] $E$ is $m'$-regular for all integers $m' \geq m;$
	\medskip
	\item[2.] $E(m)$ is globally generated.	
\end{enumerate}
\end{thm}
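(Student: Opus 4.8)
The plan is to argue by induction on $\dim X$, reducing to the previously handled case of a hyperplane section. Since the vanishing of cohomology and the formation of $E_H := E|_H$ are unaffected by extending the ground field, I would first replace $k$ by an infinite extension, so that Bertini-type genericity arguments become available. Fixing an embedding $X \hookrightarrow \p^N$ with $\mathcal{O}_X(1)$ the restricted hyperplane bundle, I would choose a general hyperplane $H \subset \p^N$ whose defining linear form avoids the finitely many associated points of $E$; multiplication by this form then yields a short exact sequence
$$0 \to E(-1) \to E \to E_H \to 0$$
of sheaves on $X$, with $E_H$ supported on $X \cap H$, a projective scheme of strictly smaller dimension, which is where the inductive hypothesis will be applied.

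For item 1, I would first verify that $m$-regularity descends to the hyperplane section. Twisting the displayed sequence by $\mathcal{O}_X(m-i)$ and passing to cohomology gives
$$H^i(E(m-i)) \to H^i(E_H(m-i)) \to H^{i+1}(E(m-i-1)),$$
whose outer terms vanish for $i \geq 1$ by the $m$-regularity of $E$; hence $E_H$ is $m$-regular. The inductive hypothesis in dimension $\dim X - 1$ then shows $E_H$ is $m'$-regular for every $m' \geq m$. To propagate regularity upward for $E$ itself, I twist the sequence by $\mathcal{O}_X(m+1-i)$ and read off
$$H^i(E(m-i)) \to H^i(E(m+1-i)) \to H^i(E_H(m+1-i));$$
the left term vanishes by $m$-regularity and the right term vanishes because $E_H$ is $(m+1)$-regular, so $H^i(E(m+1-i)) = 0$ for $i > 0$, i.e. $E$ is $(m+1)$-regular. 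Iterating this step yields $m'$-regularity for all $m' \geq m$.

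For item 2 I may assume $m = 0$ after replacing $E$ by $E(m)$, so that $E$ is $0$-regular and the goal becomes the global generation of $E$. The crucial intermediate step is the surjectivity of the multiplication maps
$$H^0(E(t)) \otimes H^0(\mathcal{O}_X(1)) \to H^0(E(t+1)), \qquad t \geq 0,$$
which I would again establish by the hyperplane induction: using the restriction sequence together with the persistence of regularity proved in item 1, one controls the relevant kernels and cokernels on $X$ and on $X \cap H$ simultaneously. Granting this, composition shows that $H^0(E) \otimes H^0(\mathcal{O}_X(t)) \to H^0(E(t))$ is surjective for all $t \geq 0$. Finally I would compare the evaluation map $\mathrm{ev}\colon H^0(E) \otimes \mathcal{O}_X \to E$ with Serre's theorem: for $t \gg 0$ the sheaf $\coker(\mathrm{ev})(t)$ is globally generated, yet the surjectivity of the multiplication maps forces it to have no global sections, so it vanishes and $\mathrm{ev}$ is surjective.

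I expect the main obstacle to be the surjectivity of the multiplication maps in item 2; unlike the purely cohomological bookkeeping of item 1, this requires relating sections on $X$ to sections on the hyperplane section and invoking the inductive regularity statement at precisely the right twists. The genericity of $H$, ensuring both that the restriction sequence is exact and that $E_H$ inherits regularity, is the other point demanding care, and is the reason for passing to an infinite ground field at the outset.
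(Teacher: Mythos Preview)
Your argument is the standard Mumford induction and is correct as outlined. Note, however, that the paper does not supply its own proof of this statement: it is quoted as a known result with a reference to \cite[Lemma~1.8.3]{L}, so there is no in-paper argument to compare against. What you have written is essentially the proof one finds in that reference (and in Mumford's original treatment), so your proposal is consistent with what the paper defers to.
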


		\medskip
		\section{Grassmann Manifolds} \label{grassm}
		
		In this Section, $G$ will denote a Grassmann manifold $Gr(k,n),$ is the Grassmannian of linear subspaces $\mathbb{P}^{k}$ in the $\mathbb{P}^{n}$ on the field $\mathbb{C}$. In this case, the elements $\Lambda$ are seen as $k$ planes of $\p^n.$ Note that, in the particular case $k=0,$ i.e., the trivial Grassmannian $G$ is the projective space $\p^n.$ 
		The Grassmann variety of $k$ planes in $\p^n$ is naturally identified with the Grassmann variety of $(n-k-1)-$ planes in $\breve{\p^n}.$ By the duality, $Gr(k,n) \cong Gr(n-k-1,n).$	
	
	\medskip
	
	On $Gr(k,n)$ there is a canonical exact sequence	
	\begin{equation}\label{exactsequenceG}
		0 \to \mathcal{S} \to \mathcal{O}^{\oplus(n+1)}_{G} \to \mathcal{Q} \to 0,
	\end{equation}

\noindent where $\mathcal{S}$ has rank $k+1$ and is called the universal (tautological) bundle, $\mathcal{Q}$ has rank $n-k$ and is called the quotient bundle. Considering the isomorphism $Gr(k,n) \cong Gr(n-k-1,n),$ the canonical exact sequence on $Gr(n-k-1,n)$ is the dual sequence of (\ref{exactsequenceG}).
 
It is well known that $\mathcal{S}^{\vee}$ and $\mathcal{Q}$ are globally generated,  see for instance \cite{fujita}. We denote the dimension of the Grassmannian by $m = \dim \; Gr(k,n) = (k+1)(n-k).$ 
		\medskip

		\begin{thm} \cite{S} [Bott's formula for Grassmannian]\label{BottGr}
			Let $G = G(k+1,n+1)$ be the Grassmann manifold and let $n-k \geq k+1, \; m= dim G = (k+1)(n-k).$ Assume $1 \leq t \leq n.$ Then, $H^p(G,\Omega^q(t))=0$ if any of the following conditions is satisfied:
			\begin{enumerate}
				\item $(k+1)p \geq kq >0;$
				\item  $p>m-q;$
				\item  $q > m-k-1 = (k+1)(n-k-1),$ if $G \neq Gr(1,3);$
				\item $q \leq t$ and $p>0.$
			\end{enumerate}
			In particular, $H^p(G,\Omega^q(t))=0,$ if $p \geq \Big(\dfrac{k}{2k+1}\Big) \; m.$
		\end{thm}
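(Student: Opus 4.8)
The plan is to prove this vanishing theorem by the Borel--Weil--Bott theorem, which is the standard machine for computing the cohomology of homogeneous bundles on a rational homogeneous variety. First I would realize $G = Gr(k,n)$ as the quotient $SL(n+1,\mathbb{C})/P$, where $P$ is the maximal parabolic subgroup attached to the $(k+1)$-st node of the $A_n$ Dynkin diagram, with Levi factor $GL(k+1) \times GL(n-k)$. Under this description the tautological and quotient bundles $\mathcal{S}$ and $\mathcal{Q}$ of (\ref{exactsequenceG}) are the homogeneous bundles induced by the standard representations of the two factors of the Levi, the ample generator is $\mathcal{O}_G(1) = \det \mathcal{Q}$, and the tangent bundle is $TG \cong \mathcal{S}^{\vee} \otimes \mathcal{Q}$, so that $\Omega^1_G \cong \mathcal{S} \otimes \mathcal{Q}^{\vee}$. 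Every irreducible homogeneous bundle on $G$ is then labelled by a $P$-dominant integral weight, and its sheaf cohomology is governed by the action of the Weyl group $W = S_{n+1}$ on weights.

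The second step is to decompose the bundles $\Omega^q_G(t)$ into irreducible homogeneous summands so that Bott's algorithm can be applied termwise. Using the Cauchy (Littlewood) formula for the exterior power of a tensor product,
\begin{equation*}
\Omega^q_G \;=\; \bigwedge^{q}\bigl(\mathcal{S} \otimes \mathcal{Q}^{\vee}\bigr) \;=\; \bigoplus_{\lambda} \mathbb{S}_{\lambda}(\mathcal{S}) \otimes \mathbb{S}_{\lambda'}(\mathcal{Q}^{\vee}),
\end{equation*}
where $\lambda$ runs over the partitions of $q$ fitting inside the $(k+1)\times(n-k)$ rectangle and $\lambda'$ is the conjugate partition. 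Twisting by $\mathcal{O}_G(t) = (\det \mathcal{Q})^{\otimes t}$ keeps each summand irreducible and shifts its weight in an explicit way. Thus I would reduce the theorem to a statement about a single irreducible homogeneous bundle $\mathbb{S}_{\lambda}(\mathcal{S}) \otimes \mathbb{S}_{\lambda'}(\mathcal{Q}^{\vee}) \otimes \mathcal{O}_G(t)$ with a completely explicit highest weight $\omega_{\lambda,t}$.

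The third step is the application of Borel--Weil--Bott. For $A_n$ one encodes $\omega_{\lambda,t} + \rho$ (with $\rho$ the half-sum of the positive roots) as a specific integer sequence; if two of its entries coincide the weight is singular and $H^p$ vanishes in every degree, while if all entries are distinct the cohomology is concentrated in the single degree $\ell(w)$, where $w \in W$ is the unique element sorting the sequence into decreasing order and $\ell(w)$ is its number of inversions. Consequently $H^p(G, \Omega^q_G(t))$ can be nonzero only if some box partition $\lambda \vdash q$ produces a regular weight with exactly $p$ inversions. Each of the hypotheses (1)--(4) should then be recast as a numerical constraint on $(p,q,t)$ guaranteeing that, for every such $\lambda$, either $\omega_{\lambda,t}+\rho$ is singular or its number of inversions differs from $p$; condition (4), for instance, is exactly the range $q \leq t$ in which the twist already makes $\omega_{\lambda,t}+\rho$ dominant for every $\lambda$, so that all cohomology sits in degree $0$ and $H^p$ vanishes for $p>0$. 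Finally, the ``in particular'' clause is not independent: conditions (1) and (2) cover the ranges $q \leq \tfrac{k+1}{k}p$ and $q > m-p$ respectively, and these two ranges exhaust $\{1,\dots,m\}$ precisely when $\tfrac{2k+1}{k}\,p \geq m$, i.e.\ $p \geq \tfrac{k}{2k+1}\,m$ (the case $q=0$ being $\mathcal{O}_G(t)$ with $t \geq 1$, which has no higher cohomology).

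The main obstacle is the combinatorial bookkeeping in the third step: one must bound the inversion number $\ell(w)$ \emph{uniformly} over all partitions $\lambda$ inside the $(k+1)\times(n-k)$ box and show it cannot equal $p$ in the stated ranges, rather than for a single convenient $\lambda$. This is where the asymmetry between $k+1$ and $n-k$ — hence the standing hypothesis $n-k \geq k+1$ — and the small exceptional geometry of $Gr(1,3)$ appearing in condition (3) enter, and where the argument must be carried out with care; the remaining translations between weights, inversions, and the inequalities (1)--(4) are then routine.
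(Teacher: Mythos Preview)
The paper does not supply a proof of this theorem at all: it is quoted verbatim from Snow \cite{S} and used as a black box, so there is no ``paper's own proof'' to compare against. Your Borel--Weil--Bott strategy is exactly the machinery Snow employs in the cited reference, and your outline---realize $G$ as $SL(n+1)/P$, decompose $\Omega^q_G(t)$ via the Cauchy formula into irreducible homogeneous summands $\mathbb{S}_\lambda(\mathcal{S})\otimes\mathbb{S}_{\lambda'}(\mathcal{Q}^\vee)(t)$, and then bound the inversion count of $\omega_{\lambda,t}+\rho$ uniformly over all box partitions $\lambda\vdash q$---is the correct and standard route.

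Your derivation of the ``in particular'' clause from conditions (1) and (2) is also right: the union $\{q\le\tfrac{k+1}{k}p\}\cup\{q>m-p\}$ covers all positive $q$ exactly when $p\ge\tfrac{k}{2k+1}m$. The only point you flag as delicate---the uniform combinatorial bound on $\ell(w)$ over all $\lambda$ in the $(k{+}1)\times(n{-}k)$ rectangle, together with the exceptional behaviour of $Gr(1,3)$ in condition (3)---is genuinely where the work lies, and is handled in \cite{S} by a direct case analysis of the possible positions of the entries of $\omega_{\lambda,t}+\rho$ relative to the ``staircase'' $\rho$; you have correctly identified this as the substantive step rather than glossed over it.
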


		We start by proving  a few facts about the vanishing for the cohomology of twisted holomorphic forms for Grassmannian.

		\begin{Lemma} \label{vanish} Let $G$ be the Grassmann manifold $Gr(k,n)$ of $k$ planes in $\p^n,$ then:
			\begin{itemize}
				\item [(i)] $H^p_{\ast}(G, \Omega^{m-1}_G) = 0$ for $1 \leq p \leq m-2.$
				\item [(ii)] $H^p_{\ast}(G, \Omega^{m-i}_G) = 0$ for $1 \leq p < m-i$ and $2 \leq i \leq k.$ 
				\item [(iii)] $H^p_{\ast}(G, \cO_G) = 0$ for $1 \leq p \leq m-1.$
			\end{itemize}	
		\end{Lemma}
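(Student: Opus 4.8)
The plan is to unwind the graded groups $H^p_{\ast}(G,\Omega^q_G)=\bigoplus_{t\in\Z}H^p(G,\Omega^q_G(t))$ and to show, for the three relevant form degrees $q=m-1$, $q=m-i$ with $2\le i\le k$, and $q=0$, that each summand $H^p(G,\Omega^q_G(t))$ vanishes in the prescribed range of $p$ for \emph{every} twist $t$. I would combine three ingredients: the Hodge decomposition of $G$ at $t=0$, Bott's formula (Theorem \ref{BottGr}) for positive twists, and Serre duality for holomorphic forms to reduce negative twists to positive ones. By the duality $Gr(k,n)\cong Gr(n-k-1,n)$ one may assume $n-k\ge k+1$, as required by Theorem \ref{BottGr}, and I would dispose of the case $k=0$, where $G=\p^n$, separately by the classical Bott formula on projective space (there all three statements are immediate, since $H^p(\p^n,\Omega^q(t))\neq 0$ forces $p\in\{0,q,n\}$).

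For the twist $t=0$ I would use that the Grassmannian has its Hodge numbers concentrated on the diagonal, so that $H^p(G,\Omega^q_G)=0$ whenever $p\neq q$. The ranges imposed in the statement are designed precisely to avoid the diagonal: in (i) we have $q=m-1$ and $1\le p\le m-2$, in (ii) we have $q=m-i$ and $1\le p<m-i$, and in (iii) we have $q=0$ and $1\le p\le m-1$; in each case $p\neq q$, so the $t=0$ summand vanishes. For a positive twist $t\ge 1$ I would apply Theorem \ref{BottGr}. For (iii) condition (4) applies directly, since $q=0\le t$ and $p>0$. For (i) and (ii) I would invoke condition (3): writing $q=m-i$ with $1\le i\le k$ gives $q=m-i\ge m-k>m-k-1$, so condition (3) forces the vanishing of $H^p(G,\Omega^q_G(t))$ for all $p>0$ (note that (ii) is vacuous unless $k\ge 2$, and that the only excluded case $G=Gr(1,3)$, where solely (i) is non-vacuous, is a four-dimensional quadric and can be checked by hand).

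For a negative twist $t<0$ I would use Serre duality in the form
\begin{equation*}
	H^p(G,\Omega^q_G(t))\;\cong\; H^{m-p}\bigl(G,\Omega^{m-q}_G(-t)\bigr)^{\ast},
\end{equation*}
which exchanges the twist sign and replaces the form degree $q$ by $m-q$. For (i) and (ii) this turns the problem into the vanishing of $H^{m-p}(G,\Omega^i_G(s))$ with $s=-t>0$, $1\le i\le k$ and cohomological degree $m-p>i$; here Bott's condition (1) applies, since with $q'=i$ one has $(k+1)(m-p)\ge(k+1)(i+1)>ki=kq'>0$. For (iii) the Serre dual of $\mathcal{O}_G(t)$ is $\Omega^m_G(-t)\cong\mathcal{O}_G(-t-n-1)$; when $t\le -(n+1)$ the exponent $-t-n-1\ge 0$ is again covered by the positive/zero case, while the single value $t=-(n+1)$ is the dual of $t=0$.

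The main obstacle is the control of the twists with $|t|>n$, which lie outside the range of Theorem \ref{BottGr}, together with the central negative range of part (iii). Both are resolved by the Borel--Weil--Bott theorem (already used in the introduction): for $\mathcal{O}_G(t)$ a dominant weight ($t\ge 0$) produces cohomology only in degree $0$, while for $-n\le t\le -1$ the $\rho$-shifted weight is singular and \emph{all} cohomology vanishes. This last range is the genuinely delicate point, since Serre duality maps it to itself and it cannot be reduced to a positive twist. The analogous statement for $\Omega^i_G(s)$ with $s>n$ follows the same way, the associated weights being regular and dominant so that the higher cohomology that concerns us vanishes. The remaining verifications --- the numerical inequalities in conditions (1) and (3), and the bookkeeping identifying which single degree survives in each Borel--Weil--Bott computation --- are routine.
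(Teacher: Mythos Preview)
Your proposal is correct and follows essentially the same approach as the paper: the paper's own proof is the single sentence ``All the items in this Lemma come from a direct application of Theorem~\ref{BottGr} and the results contained in \cite{S}, using the Serre duality,'' and you have carried out precisely that program in detail---splitting into the twist ranges $t=0$, $1\le t\le n$, $t<0$ (via Serre duality), and handling the residual cases $|t|>n$ and $-n\le t\le -1$ by the full Borel--Weil--Bott calculation that underlies Snow's results. Your bookkeeping with conditions (1), (3), (4) of Theorem~\ref{BottGr}, the diagonal Hodge numbers at $t=0$, and the singular-weight argument for $\mathcal{O}_G(t)$ with $-n\le t\le -1$ are all accurate, so there is nothing to add beyond noting that you have made explicit what the paper leaves to the cited reference.
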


		\begin{proof}
			
			All the items in this Lemma come from a direct application of the Theorem \ref{BottGr} and the results contained in \cite{S}, using the Serre duality.
			
		\end{proof}
		
		To present some results about distributions on Grassmann manifold we need to specify its definition as follows.  
		
		
		Take the $c$-th wedge product of the inclusion $N^{\vee}_{\sF} \subset \Omega_{G}^{1}$ the distribution can be given by a nonzero twisted differential $c$-form on $G$ locally decomposable
		
		$$\omega \in H^{0}(G, \Omega^{c}_{G} \otimes \det(N_{\sF})).$$
		
		The degree of the distribution $\sF$ is defined by degree of the zeros locus of the pullback $i^{\ast}\omega$, where
		
		$$i : G_{c} \to G,$$
		
		\noindent is the linear embedding of a generic $c$-plane. Then
		
		$$\deg (\sF) = \deg(Z(i^{\ast}\omega)) = \deg (c_{1}(N_{\sF})\otimes \Omega^{c}_{G_{c}}).$$
		
		Thus $\det(N_{\sF}) = \mathcal{O}_{G}(\deg(\sF) +c +1)$ and taken the determinant in the sequence (\ref{eq:Dist}) we get 
		
		$$\det(T_{\sF}) = \mathcal{O}_{G}(\dim(\sF) - \deg(\sF)).$$
		
		\begin{thm} Let $\sF$ be a codimension $c$ distribution on $G$ whose its singular locus $Z$ has the expected codimension $c+1.$
			If  $T_{\sF}$ splits as a sum of line bundles, then $Z$ is arithmetically Cohen Macaulay. 
		\end{thm}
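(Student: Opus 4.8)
The plan is to resolve the ideal sheaf $\mathcal{I}_Z$ by an Eagon--Northcott complex built from the distribution map, and then to run a cohomological chase that reduces the intermediate cohomology of $\mathcal{I}_Z$ to that of twisted sheaves of differential forms, which vanishes by Bott's formula (Lemma \ref{vanish}).

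First I would dualize the defining sequence $0 \to T_{\sF} \xrightarrow{\phi} TG \to N_{\sF} \to 0$. Since $T_{\sF}$ is locally free of rank $m-c$ and $N_{\sF}$ is locally free away from $Z$, the dual morphism $\phi^{\vee}\colon \Omega_{G}^{1} \to T_{\sF}^{\vee}$ is generically surjective and its degeneracy scheme $\Sing(\phi^{\vee})$ is exactly $Z$; indeed the isomorphism $\bigwedge^{m-c}TG \otimes \det T_{\sF}^{\vee} \cong \Omega_G^{c}\otimes \det N_{\sF}$ identifies the associated global section with the twisted form $\omega$ defining $\sF$. Taking $\cA = \Omega_G^1$ (rank $a=m$) and $\cB = T_{\sF}^{\vee}$ (rank $b=m-c$), the hypothesis that $Z$ has the expected codimension $a-b+1 = c+1$ lets me apply the Eagon--Northcott complex, obtaining a resolution
\begin{equation*}
0 \to F_c \to F_{c-1} \to \cdots \to F_0 \to \mathcal{I}_Z \to 0, \qquad F_j = \Omega_G^{m-c+j}\otimes S_j(T_{\sF})\otimes \det T_{\sF}.
\end{equation*}
The crucial point is that $T_{\sF}$ splits, so each $S_j(T_{\sF})\otimes \det T_{\sF}$ is a direct sum of line bundles; hence every $F_j$ is a direct sum of twisted forms $\Omega_G^{m-c+j}(t)$, and the top term $F_c = \Omega_G^{m}\otimes S_c(T_{\sF})\otimes \det T_{\sF}$ is a direct sum of line bundles.

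Next I would break the resolution into short exact sequences $0 \to K_j \to F_j \to K_{j-1}\to 0$ with $K_{-1}=\mathcal{I}_Z$ and $K_{c-1}=F_c$, and chase cohomology. For each $j$ the long exact sequence produces an injection $H^{p+j}_{\ast}(K_{j-1}) \hookrightarrow H^{p+j+1}_{\ast}(K_j)$ as soon as $H^{p+j}_{\ast}(F_j)=0$; composing these gives
\begin{equation*}
H^{p}_{\ast}(\mathcal{I}_Z)\hookrightarrow H^{p+1}_{\ast}(K_0)\hookrightarrow\cdots\hookrightarrow H^{p+c}_{\ast}(F_c).
\end{equation*}
Writing $i=c-j$, the required vanishings are $H^{p+c-i}_{\ast}(\Omega_G^{m-i})=0$ for $1\le i\le c$, and for $1\le p\le \dim Z=m-c-1$ the relevant cohomological degree $q=p+c-i$ lies in the range $1\le q\le m-i-1<m-i$, which is precisely where Lemma \ref{vanish}(i)--(ii) (equivalently Bott's formula, Theorem \ref{BottGr}) guarantees the vanishing. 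Finally $F_c$ is a direct sum of line bundles, so $H^{p+c}_{\ast}(F_c)=0$ for $1\le p+c\le m-1$ by Lemma \ref{vanish}(iii); since $1\le p\le m-c-1$ forces $c+1\le p+c\le m-1$, the target of the chain vanishes, and therefore $H^{p}_{\ast}(\mathcal{I}_Z)=0$ for $1\le p\le \dim Z$. As $G$ is projectively normal in its Pl\"ucker embedding, this is exactly the criterion for $Z$ to be arithmetically Cohen--Macaulay.

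I expect the main obstacle to be the cohomological chase at the top of the range rather than the formal setup. The delicate point is that $H^{m-1}(\Omega_G^{m-1})\neq 0$ (the Hodge class at twist $0$), so the maps in the chain cannot in general be promoted to isomorphisms; working with the injections alone is what lets this nonzero class be absorbed, since by the indexing it can only affect $H^{m-c}_{\ast}(\mathcal{I}_Z)$, i.e.\ degree $m-c=\dim Z+1$, just outside the aCM range. Verifying the remaining vanishings $H^{q}_{\ast}(\Omega_G^{m-i})=0$ uniformly in the twist $t$ is where the genuine work lies: one must reconcile Bott's formula (stated in Theorem \ref{BottGr} for $1\le t\le n$) with the negative twists via Serre duality, and cover the values of $i$ up to $c$, possibly invoking Theorem \ref{BottGr} directly beyond the range recorded in Lemma \ref{vanish}.
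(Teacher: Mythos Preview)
Your proposal is correct and follows essentially the same route as the paper: resolve $\mathcal{I}_Z$ by the Eagon--Northcott complex of the dual map $\Omega_G^1\to T_{\sF}^{\vee}$, use that $T_{\sF}$ (hence each $S_j(T_{\sF})$) splits, and chase through the short exact sequences with Lemma~\ref{vanish} (Bott's formula) supplying the required vanishings for $\Omega_G^{m-i}$. Your injection chain is a slightly tidier bookkeeping of the same argument, and your remark that the nonzero class $H^{m-1}(\Omega_G^{m-1})$ falls just outside the aCM range is a useful observation the paper leaves implicit.
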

		
		\begin{proof} 
			Let $m$ be the dimension of $G$ and let $c,d,g$ be, respectively, the codimension, degree and the rank of $\sF.$ Consider the Eagon-Northcott resolution of the structure sheaf of $Z,$ associated to the morphism $\eta^{*}: \Omega_G^1 \to T_{\sF}^{\vee}:$
			
			\begin{center}
				$0 \to \Omega^m_G \otimes S_c(T_{\sF}) (g-d) \stackrel{\varphi_c}{\to}
				\Omega^{m-1}_G \otimes S_{c-1}(T_{\sF}) (g-d)
				\stackrel{\varphi_{c-1}}{\to} \cdots$
			\end{center}
			\begin{center}
				$\cdots  \to  \Omega^{g+1}_G \otimes T_{\sF}(g-d) \stackrel{\varphi_1}{\to} \Omega^{g}_G (g-d) \stackrel{\varphi_0}{\to} \mathcal{I}_Z \to 0.$
			\end{center}
			
			Twisting by $\mathcal{O}_{G} (t),$ and break it down into the short exact sequences:
			\begin{center}
				$0 \to \Omega^m_G \otimes S_c(T_{\sF}) (g-d+t) \to \Omega^{m-1}_G \otimes S_{c-1}(T_{\sF}) (g-d+t) \to \Ker \; \varphi_{c-2}(t) \to 0$ 	
				$$\vdots$$
				$0 \to  \Ker \; \varphi_{c-i}(t)   \to \Omega^{m-i}_G \otimes S_{c-i}(T_{\sF}) (g-d+t) \to \Ker \; \varphi_{c-i-1} (t) \to 0$ 	
				$$\vdots$$
				$0 \to  \Ker \; \varphi_{0}(t)  \to \Omega^{g}_G (g-d+t) \to  \mathcal{I}_Z(t)  \to 0.$ 
			\end{center}
			
			If $T_{\sF}$ splits, so does its symmetric powers.
			Thus, $ \Omega^m_G \otimes S_c(T_{\sF}) \otimes \mathcal{O}_{G} (g-d+t)$ splits too.
			By \cite[Proposition 1.4, p.325]{Ott2} term by term, we get $H^p(G,\Omega^m_G \otimes S_c(T_{\sF}) \otimes \mathcal{O}_{G} (g-d+t)) = 0$ for $1 \leq p \leq m-1$ and all $t \in \Z.$
			Therefore, 
			$$H^p(G,\Omega^{m-1}_G \otimes S_{c-1}(T_{\sF}) \otimes \mathcal{O}_{G} (g-d+t)) \simeq H^p(G,\Ker \; \varphi_{c-2}(t))$$
			for $1 \leq p \leq m-2$ and all $t \in \Z.$
			But, by item $(i)$ of Lemma \ref{vanish}, $H^p_{\ast}(G,\Omega^{m-1}_G \otimes S_{c-1}(T_{\sF}) \otimes \mathcal{O}_{G} (g-d+t)) = 0$ for $1 \leq p \leq m-2.$ It follows that $H^p_{\ast}(G,\Ker \; \varphi_{c-2}) = 0$ for $1 \leq p \leq m-2.$ Proceeding in the same way and using the item $(ii)$ of Lemma \ref{vanish}, we have that $H^p_{\ast}(G,\Omega^{m-i}_G \otimes S_{c-i}(T_{\sF}) \otimes \mathcal{O}_{G} (g-d+t)) = 0$ for $1 \leq p < m-i$ implies that $H^p_{\ast}(G,\Ker \; \varphi_{c-i-1}(t))=0$ for $1 \leq p < m-i.$ By looking at the next sequence, we see that $H^p(G,\Ker \; \varphi_{c-i}(t))=0$ for $1 \leq p < m-i$  and all $t\in \Z.$  So, looking at the last sequence, since $H^p_{\ast}(G,\Ker \; \varphi_{0}(t))=0$ we get $H^p_{\ast}(G,\mathcal{I}_Z) = 0$ for $1 \leq p \leq g-1.$ Therefore, $Z $ is arithmetically Cohen-Macaulay.
			
		\end{proof}

		The converse of the above result now follows. For it we need of a generalized Horrocks criterion to Grassmannian giving cohomological splitting conditions for vector bundles on Grassmannian split as a direct sum of line bundles, see \cite[Theorem 2.1, p. 326]{Ott2}.

		\bigskip
		
In this part we describe some little notations and properties about Schur functors to get more understand of vector bundles on $Gr(k,n)$, see \cite{Way} and the references therein for more details. We say that $\lambda = (\lambda_{1}, \ldots, \lambda_{m})$ is a partition of $d \in \mathbb{Z}$ if $\lambda_{1}+ \cdots + \lambda_{m} =d$. In this sense, to a partition $\lambda$ is associated a Young diagram if $\lambda_{1}\geq \cdots \geq \lambda_{m} \geq 0.$

If denote $\mathbb{S}_{\lambda}V$ by the Schur functor of a decreasing sequence of integer numbers $\lambda = (\lambda_{1}, \ldots, \lambda_{m})$ applied to the vector space $V$. We list some properties:

\begin{Proposition} For a vector space $V$ of dimension $n$ and with the notation above we have \\

\begin{itemize}

\item[a)] (Symmetric power) $\mathbb{S}_{(d)}V =S^{d}V$; \\

\item[b)] (Exterior power) $\displaystyle\mathbb{S}_{\underbrace{(1,\ldots,1)}_{d}}V = \wedge^{d}V$; \\

\item[c)] (Littlewood-Richardson) If $\lambda$ and $\mu$ are partitions of $d$ and $m$ respectively, then

$$\mathbb{S}_{\lambda} V \otimes \mathbb{S}_{\mu}V = \oplus_{\nu}N_{\lambda \mu \nu} \mathbb{S}_{\nu}V,$$

\noindent where $\nu$ is a partition of $|\nu|=d+m$;

\item[d)] $\mathbb{S}_{(\lambda_{1},\ldots,\lambda_{n})} V = \mathbb{S}_{(-\lambda_{n},\ldots,-\lambda_{1})}V^{\ast}.$

\end{itemize}
\end{Proposition}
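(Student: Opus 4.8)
The plan is to work from the standard construction of the Schur functor $\mathbb{S}_\lambda$ as the image of a Young symmetrizer $c_\lambda \in \mathbb{C}[S_d]$ acting on the $d$-th tensor power $V^{\otimes d}$, where $d = |\lambda| = \lambda_1 + \cdots + \lambda_m$, together with the fact that for $\dim V = n$ the module $\mathbb{S}_\lambda V$ is precisely the irreducible polynomial $\mathrm{GL}(V)$-representation of highest weight $\lambda$. With this in hand, items a) and b) are immediate from the definition: for the single-row partition $\lambda = (d)$ the symmetrizer $c_{(d)}$ is the full symmetrization operator $\frac{1}{d!}\sum_{\sigma\in S_d}\sigma$, whose image on $V^{\otimes d}$ is by definition $S^d V$; dually, for the single-column partition $\lambda = (1,\ldots,1)$ the symmetrizer is the signed antisymmetrization operator, whose image is $\wedge^d V$. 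So the first two items require nothing beyond unwinding the construction.

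For item c) I would pass to characters. The character of $\mathbb{S}_\lambda V$ in the eigenvalues $x_1,\dots,x_n$ of a diagonalizable $g\in\mathrm{GL}(V)$ is the Schur polynomial $s_\lambda(x_1,\dots,x_n)$, and the character of a tensor product is the product of the characters. Since the $s_\nu$ form a basis of the ring of symmetric polynomials and a $\mathrm{GL}(V)$-module in characteristic zero is determined by its character, the decomposition $\mathbb{S}_\lambda V\otimes \mathbb{S}_\mu V=\bigoplus_\nu N_{\lambda\mu\nu}\,\mathbb{S}_\nu V$ is equivalent to the expansion $s_\lambda s_\mu=\sum_\nu N_{\lambda\mu\nu}\,s_\nu$, with $|\nu|=|\lambda|+|\mu|=d+m$ forced by degree and $N_{\lambda\mu\nu}$ the Littlewood--Richardson coefficients. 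The combinatorial evaluation of the $N_{\lambda\mu\nu}$ as counts of Littlewood--Richardson skew tableaux is the genuinely deep input; rather than reproving it I would invoke the statement from the standard references, e.g. \cite{Way}.

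Item d) I would obtain from duality. Because the Young symmetrizer is self-adjoint with respect to the natural pairing between $V^{\otimes d}$ and $(V^*)^{\otimes d}\cong (V^{\otimes d})^*$, the Schur functor commutes with dualization, giving $\mathbb{S}_\lambda(V^*)\cong(\mathbb{S}_\lambda V)^*$. On the other hand, the dual of the irreducible $\mathrm{GL}(V)$-module of highest weight $\lambda=(\lambda_1,\dots,\lambda_n)$ has highest weight $-w_0\lambda=(-\lambda_n,\dots,-\lambda_1)$, where $w_0$ is the longest element of the Weyl group; here $n=\dim V$, which is exactly why the weight is recorded with $n$ entries and why negative entries are permitted. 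Combining the two, $\mathbb{S}_{(-\lambda_n,\dots,-\lambda_1)}V\cong(\mathbb{S}_\lambda V)^*\cong\mathbb{S}_\lambda(V^*)$, and relabelling $\lambda$ by its reverse-negation yields exactly $\mathbb{S}_{(\lambda_1,\dots,\lambda_n)}V=\mathbb{S}_{(-\lambda_n,\dots,-\lambda_1)}(V^*)$.

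The main obstacle, as flagged above, is item c): parts a), b), d) are formal consequences of the construction and of highest-weight duality, whereas the Littlewood--Richardson rule carries all the combinatorial content and is properly cited from the literature rather than established here.
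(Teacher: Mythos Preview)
Your argument is correct: items a), b), d) follow formally from the Young-symmetrizer construction and highest-weight duality as you describe, and your reduction of c) to the product expansion of Schur polynomials with the Littlewood--Richardson coefficients cited from the literature is the standard route.

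The paper, however, does not prove this Proposition at all. It is introduced with the sentence ``We list some properties'' and is stated without proof as a collection of standard facts about Schur functors, with the reader directed to \cite{Way} for details. So your proposal goes strictly beyond what the paper does: you supply an actual sketch where the paper simply records the statements for later use. There is nothing to compare approach-wise; your write-up would serve as the missing justification the authors chose to omit.
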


In order to apply the Borel-Weil-Bott Theorem to vanish certain cohomology groups we need of some notations. If we use above properties on the quotient and universal bundles of the Grassmannian we have more specific cases: \\

$\bullet \ \ \mathcal{O}(1) = \wedge^{n-k}Q = \mathbb{S}_{\underbrace{(1,\ldots,1)}_{n-k}}Q$; \\

$\bullet \ \ \mathcal{O}(t) =\underbrace{\mathcal{O}(1)\otimes \cdots \otimes \mathcal{O}(1)}_{t} =\wedge^{n-k}Q \otimes \cdots \otimes \wedge^{n-k}Q = \mathbb{S}_{\underbrace{(t,\ldots,t)}_{n-k}}Q;$ \\

$\bullet \ \ \mathcal{O}(1) = \wedge^{k+1}S = \mathbb{S}_{\underbrace{(1,\ldots,1)}_{k+1}}S$; \\

$\bullet \ \ \mathcal{O}(t) =\underbrace{\mathcal{O}(1)\otimes \cdots \otimes \mathcal{O}(1)}_{t} =\wedge^{k+1}S \otimes \cdots \otimes \wedge^{k+1}S = \mathbb{S}_{\underbrace{(t,\ldots,t)}_{k+1}}S.$ \\

Now, we quote a version of the Borel-Weil-Bott Theorem for $Gr(k,n)$, see \cite[Corollary 4.19]{Way}.

\begin{thm}(Borel-Weil-Bott Theorem)\label{BWB} For each vector bundle on $G=Gr(k,n)$ of the form

$$ \mathbb{S}_{\beta}S^{\vee} \otimes \mathbb{S}_{\gamma}Q^{\vee} \ or \ (\mathbb{S}_{\gamma}Q \otimes \mathbb{S}_{\beta}S),$$ 

\noindent one of two mutually exclusive possibilities occurs for $\alpha = (\beta, \gamma) \ or \ \alpha = (\gamma, \beta):$ \\

\begin{enumerate}

\item[1)] There exist a permutation $\sigma \in \Sigma_{m}, \ \sigma \neq 1$ such that $\sigma.\alpha = \alpha$, then

$$H^{i}(G,\mathbb{S}_{\gamma}Q \otimes \mathbb{S}_{\beta}S) = 0, \ \ for \ \ i\geq 0.$$

\noindent \begin{Remark} This condition is equivalent $\alpha$ has repetitions entries, see Remark 2.2, in \cite{GLS}. 
\end{Remark}

\item[2)] There exists a unique permutation $\sigma \in \Sigma_{m}$ such that $\sigma.\alpha = \eta$ is a partition nonincreasing. In this case we have

$$ H^{i}(G,\mathbb{S}_{\gamma}Q \otimes \mathbb{S}_{\beta}S) =\left\{\begin{array}{clccc}
\neq 0,  & i=l(\sigma) \\ \\

0, & i \neq l(\sigma),
\end{array}\right.$$

\noindent where $l(\sigma)$ is the length of the permutation (number of inversions). 

\begin{Remark} For a precisely value fo these groups when it is nonzero, see \cite{Way}.
\end{Remark}

\end{enumerate}

\end{thm}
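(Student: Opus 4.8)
The plan is to deduce the statement from the general Borel--Weil--Bott theorem on the flag variety of $GL_{n+1}$, specialized to the maximal parabolic having $G=Gr(k,n)$ as quotient, and then to translate the representation-theoretic output into the combinatorial ``sort $\alpha+\rho$ and count inversions'' form asserted here. First I would realize $G$ as $GL_{n+1}(\mathbb{C})/P$, where $P$ is the maximal parabolic stabilizing a $(k+1)$-dimensional subspace, with Levi factor $L\cong GL_{k+1}\times GL_{n-k}$. Under this identification $S$ and $Q$ are the homogeneous bundles induced by the standard representations of the two factors of $L$, so that $\mathbb{S}_\gamma Q \otimes \mathbb{S}_\beta S$ is exactly the homogeneous vector bundle $\mathcal{E}(V_\alpha)$ attached to the irreducible $L$-representation $V_\alpha = \mathbb{S}_\gamma(\mathbb{C}^{n-k}) \boxtimes \mathbb{S}_\beta(\mathbb{C}^{k+1})$. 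Its highest weight is the concatenation $\alpha=(\gamma,\beta)$, a tuple of length $\mathrm{rk}(Q)+\mathrm{rk}(S)=(n-k)+(k+1)=n+1$ which is $L$-dominant precisely because $\gamma$ and $\beta$ are nonincreasing (the Young-diagram condition of the preceding Proposition). The dual form $\mathbb{S}_\beta S^\vee \otimes \mathbb{S}_\gamma Q^\vee$ is reduced to this case by property (d) of that Proposition.

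Next I would compare the cohomology of $\mathcal{E}(V_\alpha)$ on $G/P$ with the cohomology of a line bundle on the full flag variety $GL_{n+1}/B$. Along the fibration $\pi\colon GL_{n+1}/B \to G/P$, whose fibre is the flag variety of $L$, the $L$-dominance of $\alpha$ gives (Borel--Weil on the fibres) that the line bundle $\mathcal{L}_\alpha$ satisfies $R^{j}\pi_*\mathcal{L}_\alpha=0$ for $j>0$ and $\pi_*\mathcal{L}_\alpha=\mathcal{E}(V_\alpha)$, so the Leray spectral sequence collapses to $H^i(G/P,\mathcal{E}(V_\alpha))\cong H^i(GL_{n+1}/B,\mathcal{L}_\alpha)$. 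The latter is governed by the \emph{dot action} $\sigma\cdot\alpha=\sigma(\alpha+\rho)-\rho$ of the Weyl group $\Sigma_{n+1}$, with $\rho=(n,n-1,\dots,1,0)$ (equivalently by Kostant's computation of the cohomology of the nilradical of $P$): if $\alpha+\rho$ is singular, i.e.\ fixed by a nontrivial element of $\Sigma_{n+1}$, then all cohomology vanishes, whereas if $\alpha+\rho$ is regular there is a unique $\sigma$ with $\sigma(\alpha+\rho)$ strictly decreasing and the cohomology is concentrated in degree $\ell(\sigma)$.

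It then remains to put this in the stated combinatorial form, which is pure bookkeeping. Since $\rho$ has distinct entries, the stabilizer of $\alpha+\rho$ in $\Sigma_{n+1}$ is nontrivial exactly when $\alpha+\rho$ has a repeated entry --- the equivalence recorded in the embedded Remark --- giving case (1); and when the entries of $\alpha+\rho$ are pairwise distinct, sorting them into strictly decreasing order determines the unique $\sigma$, with $\ell(\sigma)$ its number of inversions, so the single nonvanishing group sits in degree $\ell(\sigma)$ and is the $GL_{n+1}$-representation of highest weight $\sigma(\alpha+\rho)-\rho$, which is case (2) (its explicit value being the content of the second Remark and of \cite{Way}).

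The genuine input --- and the step I expect to be the main obstacle --- is the general Borel--Weil--Bott/Kostant theorem underlying the second paragraph; everything else is the routine identification of $\mathcal{E}(V_\alpha)$ and the conversion of regularity of $\alpha+\rho$ into the distinct-entries criterion and the sorting permutation. As the paper uses this only as the external input \cite[Corollary 4.19]{Way}, in practice I would cite Kostant's theorem together with the Leray collapse rather than reprove it, and concentrate the effort on making the weight identification $\alpha=(\gamma,\beta)$ and the $\rho$-shift precise.
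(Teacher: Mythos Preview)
The paper does not prove this theorem at all: it is quoted verbatim as \cite[Corollary 4.19]{Way} and used as an external input, with no argument given. Your sketch is therefore not competing with any proof in the paper; it is a correct outline of the standard derivation (homogeneous bundle on $GL_{n+1}/P$, Leray collapse along $GL_{n+1}/B\to GL_{n+1}/P$ via Borel--Weil on the fibre, then Bott/Kostant on $G/B$, then the $\rho$-shift bookkeeping), and you already anticipated in your final paragraph that the paper would simply cite the result. One minor remark: the symmetric group acting should indeed be $\Sigma_{n+1}$, as you write, since $\alpha$ has $(n-k)+(k+1)=n+1$ entries; the paper's use of $\Sigma_m$ with $m=\dim G$ in the statement appears to be a slip of notation rather than a different convention.
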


Consider $i_{1},\ldots, i_{s}$ integer numbers such that $0\leq i_{1},\ldots, i_{s} \leq n-k$, then

\begin{equation}\label{eq22}
\wedge^{i_{1}} Q^{\vee}\otimes \cdots \otimes \wedge^{i_{s}} Q^{\vee} =\oplus_{\nu} \mathbb{S}_{\nu} Q^{\vee},
\end{equation}

\noindent where $\nu= (\nu_{1},\ldots ,\nu_{n-k})$ is a partition of $i_{1}+ \cdots + i_{s}$ with $\nu_{1}\geq\cdots \geq\nu_{n-k}.$

We have $TG = Q\otimes S^{\vee}.$ Then we first tensor (\ref{eq22}) by $Q$,

\begin{equation}\label{eq33}
\mathbb{S}_{\nu}Q^{\vee}\otimes Q = \mathbb{S}_{\nu}Q^{\vee}\otimes \mathbb{S}_{(\underbrace{0,\ldots,0}_{n-k-1},-1)}Q^{\vee} = \oplus_{\mu} \mathbb{S}_{\mu} Q^{\vee},
\end{equation}

\noindent where $\mu = (\mu_{1},\ldots,\mu_{n-k})$ is obtained by subtracting 1 to some entry of $\nu$. We observe that

$$\mathcal{O}(t) = \mathcal{O}(1)\otimes \cdots \otimes \mathcal{O}(1)= \wedge^{n-k}Q\otimes \cdots \otimes\wedge^{n-k}Q = \mathbb{S}_{(\underbrace{t,\ldots,t}_{n-k})}Q, $$

\noindent and
$$S^{\vee} = \mathbb{S}_{(1,\underbrace{0,\ldots,0}_{k})} S^{\vee} = \mathbb{S}_{(0,\ldots,0,-1)}S.$$

Using it in (\ref{eq33}) we have

$$\begin{array}{rc}
\mathbb{S}_{\mu}Q^{\vee}\otimes S^{\vee}\otimes \mathcal{O}(t) =& \mathbb{S}_{\mu}Q^{\vee}\otimes \mathbb{S}_{(t,\ldots,t)}Q \otimes \mathbb{S}_{(0,...,0,-1)}S \\ \\

=& \mathbb{S}_{(t-\mu_{n-k},\ldots,t-\mu_{1})}Q\otimes \mathbb{S}_{(0,\ldots,0,-1)}S.

\end{array}$$

Then in order to apply the Theorem \ref{BWB} we consider the sequence
$$\alpha = (t-\mu_{n-k},\ldots,t-\mu_{1},\underbrace{0,\ldots,0}_{k},-1),$$
\noindent and subtracting consecutive integers

$$\rho = (n-k,\ldots,2,1,0,-1,-2,\ldots,-k+1,-k)$$

\noindent we have
$$(t +n-k-\mu_{n-k},\ldots,t+2-\mu_{2},t+1-\mu_{1},0,-1,-2,\ldots,-k+1,-k-1).$$

We analyze possible values of $t$ for this sequence and, using the Theorem \ref{BWB}, we obtain the following vanishing result. 

\vspace{7 cm}

\begin{thm}\label{themMain} With above notations on $G=Gr(k,n)$ and dimension $m=(k+1)(n-k)$, we have \\

\begin{enumerate}

\item[a)] If $t = \mu_{l}-l+j$ with $1\leq l \leq n-k, \ \ -k-1\leq j \leq 0$ and $j\neq -k$ the above sequence has repeated terms, then

%

$$H^{i}(G, \wedge^{i_{1}} Q^{\vee}\otimes \cdots \otimes \wedge^{i_{s}} Q^{\vee}\otimes TG(t))=0 \ \ for \ \  i\geq 0,$$

\item[b)] If $t>\mu_{1}-1$ we have a decreasing sequence without repetitions, then

$$H^{i}(G, \wedge^{i_{1}} Q^{\vee}\otimes \cdots \otimes \wedge^{i_{s}} Q^{\vee}\otimes TG(t))=\left\{\begin{array}{clccc}
\neq 0,  & i=0 \\ \\

0, & i>0,
\end{array}\right.$$

\item[c)] If $\mu_{j}-j <t<\mu_{j-1}-k-j$ with $2\leq j \leq n-k$ we need to apply $(j-1)(k+1)$ inversions, then

$$H^{i}(G, \wedge^{i_{1}} Q^{\vee}\otimes \cdots \otimes \wedge^{i_{s}} Q^{\vee}\otimes TG(t))=\left\{\begin{array}{clccc}
\neq 0,  & i=(j-1)(k+1) \\ \\

0, & i\neq (j-1)(k+1),
\end{array}\right.$$

\item[d)] If $t<\mu_{n-k}-(n+1)$ we need to apply $(n-k)(k+1)=m$ inversions, then

$$H^{i}(G, \wedge^{i_{1}} Q^{\vee}\otimes \cdots \otimes \wedge^{i_{s}} Q^{\vee}\otimes TG(t))=\left\{\begin{array}{clccc}
\neq 0,  & i=m \\ \\

0, & i\neq m,
\end{array}\right.$$

\item[e)] If $t= \mu_{j}-j-k, \ \ 1\leq j \leq n-k$ with $\mu_{j}-\mu_{j+1} >k-1$ we need to apply $(j-1)(k+1)+k$ inversions, then

$$H^{i}(G, \wedge^{i_{1}} Q^{\vee}\otimes \cdots \otimes \wedge^{i_{s}} Q^{\vee}\otimes TG(t))=\left\{\begin{array}{clccc}
\neq 0,  & i=(j-1)(k+1)+k \\ \\

0, & i\neq (j-1)(k+1)+k.
\end{array}\right.$$

\end{enumerate}

\end{thm}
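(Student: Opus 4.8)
The plan is to read off each of the five cases directly from the Borel-Weil-Bott Theorem (Theorem \ref{BWB}) applied to the $\rho$-shifted weight displayed immediately before the statement. First I would use Pieri's rule and the Littlewood-Richardson decomposition (\ref{eq22})--(\ref{eq33}) to split $\wedge^{i_1}Q^\vee \otimes \cdots \otimes \wedge^{i_s}Q^\vee \otimes TG(t)$ into a finite direct sum of homogeneous summands of the form $\mathbb{S}_{(t-\mu_{n-k},\ldots,t-\mu_1)}Q \otimes \mathbb{S}_{(0,\ldots,0,-1)}S$, one for each partition $\mu=(\mu_1\geq\cdots\geq\mu_{n-k})$ arising there. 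Since cohomology commutes with finite direct sums, it suffices to treat one such summand, and the hypotheses on $t$ in the statement are exactly conditions attached to a single $\mu$. For this summand I would work with the sequence
$$\alpha+\rho=(t+n-k-\mu_{n-k},\ldots,t+1-\mu_1,\,0,-1,\ldots,-k+1,-k-1)$$
and invoke the dichotomy of Theorem \ref{BWB}: if it has a repeated entry then all cohomology vanishes, and otherwise there is a unique permutation $\sigma$ sorting it into a strictly decreasing sequence, with cohomology concentrated in the single degree $l(\sigma)$.

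The next step is to record the internal structure of this sequence. Writing $a_l:=t+l-\mu_l$ for $1\leq l\leq n-k$, monotonicity of $\mu$ gives $a_{l+1}-a_l=1+(\mu_l-\mu_{l+1})\geq 1$, so the $Q$-block $a_{n-k}>\cdots>a_1$ is strictly decreasing; the $S$-block $0>-1>\cdots>-k+1>-k-1$ is also strictly decreasing, but it omits the value $-k$. Consequently the only source of a repeated entry, or of an inversion, is a pair consisting of one $Q$-entry $a_l$ and one $S$-entry. A repeat occurs precisely when some $a_l$ falls in $\{0,-1,\ldots,-k+1\}\cup\{-k-1\}$, i.e. when $t=\mu_l-l+j$ with $-k-1\leq j\leq 0$ and $j\neq -k$; this is exactly case (a), where Theorem \ref{BWB}(1) forces vanishing in all degrees.

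When there is no repeat I would compute $l(\sigma)$ as the number of cross-inversions, $l(\sigma)=\sum_{l=1}^{n-k}N(a_l)$, where $N(a)$ counts the $S$-entries strictly larger than $a$. The admissible values of each $a_l$ are $a_l\geq 1$, $a_l=-k$, or $a_l\leq -(k+2)$, and these give $N(a_l)=0$, $N(a_l)=k$, or $N(a_l)=k+1$ respectively. Translating the cut-offs into inequalities on $t$ then produces the four remaining cases: $t>\mu_1-1$ forces every $a_l\geq 1$, so $l(\sigma)=0$ (case (b)); $t<\mu_{n-k}-(n+1)$ forces every $a_l\leq -(k+2)$, so $l(\sigma)=(n-k)(k+1)=m$ (case (d)); the interval $\mu_j-j<t<\mu_{j-1}-k-j$ places $a_1,\ldots,a_{j-1}$ at most $-(k+2)$ and $a_j,\ldots,a_{n-k}$ at least $1$, giving $l(\sigma)=(j-1)(k+1)$ (case (c)); and $t=\mu_j-j-k$ makes $a_j=-k$ exactly, while the hypothesis $\mu_j-\mu_{j+1}>k-1$ guarantees $a_{j+1}\geq 1$, so the contributions are $(j-1)(k+1)$ from $a_1,\ldots,a_{j-1}$ together with $k$ from $a_j$, i.e. $l(\sigma)=(j-1)(k+1)+k$ (case (e)).

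The main obstacle will be the bookkeeping around the missing value $-k$ in the $S$-block. This gap is what separates the clean strata, where each $a_l$ contributes either $0$ or $k+1$ and the nonzero degree is a multiple of $k+1$, from case (e), where a single $a_l$ sits on the gap and contributes the intermediate amount $k$. The delicate points will be matching the stated half-open intervals for $t$ to the exact inequalities $a_{j-1}\leq -(k+2)$ and $a_j\geq 1$ (resp. $a_{j+1}\geq 1$), and checking in case (e) that the entries $a_1,\ldots,a_{j-1}$ avoid the forbidden value $-k-1$ by landing at or below $-(k+2)$, since otherwise a repeat would occur and we would instead be in case (a). Everything else is a direct application of Theorem \ref{BWB} to the sorted sequence.
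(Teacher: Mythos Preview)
Your proposal is correct and follows exactly the approach of the paper: the paper's argument consists of the discussion immediately preceding the statement, where the decomposition (\ref{eq22})--(\ref{eq33}) is carried out, the $\rho$-shifted sequence is written down, and the claim ``We analyze possible values of $t$ for this sequence and, using the Theorem \ref{BWB}, we obtain the following vanishing result'' is made. You have simply filled in that analysis explicitly, counting cross-inversions between the $Q$-block $(a_l)$ and the $S$-block $\{0,-1,\ldots,-k+1,-k-1\}$, which is precisely what the paper leaves to the reader.
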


\begin{Corollary}\label{corol} For any partition $\eta = (\eta_{1}, \ldots, \eta_{n-k})$ of $i_{1} + \cdots + i_{s}$ we have

$$H^{0}(G,\wedge^{i_{1}} Q^{\vee}\otimes \cdots \otimes \wedge^{i_{s}} Q^{\vee} \otimes \mathcal{O}_{G}(t+r)) =\left\{\begin{array}{clccc}
\neq 0,  & t>\eta_{1}-1-r \\ \\

0, & t\leq\eta_{1}-1-r.
\end{array}\right.$$ \\

\end{Corollary}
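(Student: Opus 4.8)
The plan is to reduce the statement to a one-summand computation and then feed it into Borel--Weil--Bott. First I would decompose the bundle by (\ref{eq22}), writing $\wedge^{i_1}Q^{\vee}\otimes\cdots\otimes\wedge^{i_s}Q^{\vee}=\oplus_{\nu}\mathbb{S}_{\nu}Q^{\vee}$, where each $\nu=(\nu_1\geq\cdots\geq\nu_{n-k}\geq 0)$ is a partition of $i_1+\cdots+i_s$. Since $H^0$ commutes with finite direct sums, it suffices to determine, for each summand $\mathbb{S}_{\eta}Q^{\vee}$, the exact range of twists $t$ for which $H^0\big(G,\mathbb{S}_{\eta}Q^{\vee}\otimes\mathcal{O}_G(t+r)\big)\neq 0$, and then take the union of these ranges over the partitions $\eta$ occurring in the decomposition.

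Second, I would put a single summand into the shape required by Theorem \ref{BWB}. Using $\mathcal{O}_G(t+r)=\mathbb{S}_{(t+r,\ldots,t+r)}Q$ together with the duality (property (d) of the Schur-functor proposition) $\mathbb{S}_{\eta}Q^{\vee}=\mathbb{S}_{(-\eta_{n-k},\ldots,-\eta_1)}Q$, tensoring by the line bundle simply shifts all weights, giving
\[
\mathbb{S}_{\eta}Q^{\vee}\otimes\mathcal{O}_G(t+r)=\mathbb{S}_{(t+r-\eta_{n-k},\ldots,t+r-\eta_1)}Q,
\]
which is of the form $\mathbb{S}_{\gamma}Q\otimes\mathbb{S}_{\beta}S$ with trivial $S$-factor $\beta=0$. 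This is exactly the input of the Borel--Weil--Bott calculation carried out for Theorem \ref{themMain}, except that the $TG=Q\otimes S^{\vee}$ twist there is replaced by the trivial $S$-weight here.

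Third, I would run the $\rho$-shift argument. Set $\alpha=(t+r-\eta_{n-k},\ldots,t+r-\eta_1,\underbrace{0,\ldots,0}_{k+1})$ and add $\rho=(n-k,\ldots,1,0,-1,\ldots,-k)$. The $Q$-block of $\alpha+\rho$ is automatically strictly decreasing, since $\eta$ is nonincreasing and the $\rho$-entries drop by one at each step; the $S$-block $(0,-1,\ldots,-k)$ is strictly decreasing as well. Hence the only possible failure of strict monotonicity is at the junction, where the last $Q$-entry $(t+r-\eta_1)+1$ must exceed the first $S$-entry $0$. Therefore $\alpha+\rho$ is a strictly decreasing sequence, i.e. $l(\sigma)=0$ and $H^0\neq 0$, exactly when $t+r-\eta_1\geq 0$; if instead $t+r-\eta_1\leq -1$, then either a repetition appears (case (1) of Theorem \ref{BWB}) or an inversion is forced ($l(\sigma)>0$), so $H^0=0$. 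This yields $H^0\big(G,\mathbb{S}_{\eta}Q^{\vee}\otimes\mathcal{O}_G(t+r)\big)\neq 0 \iff t>\eta_1-1-r$, which is the asserted dichotomy for the operative partition $\eta$; summing over $\nu$, the full $H^0$ becomes nonzero as soon as the summand of smallest first part acquires sections.

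The main obstacle is the bookkeeping in the third step: one must track the order-reversal introduced by property (d) to see that the vanishing is controlled solely by the first part $\eta_1$ (which becomes the last $Q$-coordinate after dualization), and verify that the $\rho$-shifted $Q$-block never collides internally, so that the single junction inequality $t+r\geq\eta_1$ genuinely governs everything. A secondary technical point is identifying, among the partitions produced by (\ref{eq22}), the one whose first part determines the threshold for the whole tensor product.
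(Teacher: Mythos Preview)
Your proposal is correct and follows essentially the same route as the paper: decompose via Littlewood--Richardson, rewrite $\mathbb{S}_{\eta}Q^{\vee}\otimes\mathcal{O}_G(t+r)$ as $\mathbb{S}_{(t+r-\eta_{n-k},\ldots,t+r-\eta_1)}Q$, form $\alpha+\rho$, and read off the $H^0$ threshold from the junction inequality. Your treatment is in fact more explicit than the paper's, which simply says ``we study the values of $t$ for which we have repetitions, a decreasing sequence, or need inversions''; your observation that the $Q$-block of $\alpha+\rho$ is automatically strictly decreasing (so only the junction entry $t+r-\eta_1+1$ versus $0$ matters) is exactly the content the paper leaves implicit.
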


\begin{proof} Given $i_{1},\ldots, i_{s}$ integer numbers such that $0\leq i_{1},\ldots, i_{s} \leq n-k$, we have the decomposition

\begin{equation}\label{eq3}
\wedge^{i_{1}} Q^{\vee}\otimes \cdots \otimes \wedge^{i_{s}} Q^{\vee} =\oplus_{\eta} \mathbb{S}_{\eta} Q^{\vee},
\end{equation}

\noindent where $\eta= (\eta_{1},\ldots ,\eta_{n-k})$ is a partition of $i_{1}+ \cdots + i_{s}$ with $\eta_{1}\geq\cdots \geq\eta_{n-k}.$ We can write

$$\mathcal{O}_{G}(t+r) = \mathbb{S}_{(t+r, \ldots, t+r)}Q.$$

Then
$$\begin{array}{rlc}
\wedge^{i_{1}} Q^{\vee}\otimes \cdots \otimes \wedge^{i_{s}} Q^{\vee}\otimes \mathcal{O}_{G}(t+r) =& \oplus_{\eta} \mathbb{S}_{\eta} Q^{\vee} \otimes \mathbb{S}_{(t+r, \ldots, t+r)}Q \\ \\

=& \oplus_{\eta} \mathbb{S}_{\underbrace{(t+r-\eta_{n-k},\ldots , t+r-\eta_{1})}_{n-k}} Q.

\end{array}$$

In order to apply the Theorem \ref{BWB} we get the sequence
$$(t+r-\eta_{n-k},\ldots , t+r-\eta_{1}, \underbrace{0,\ldots ,0}_{k+1}),$$

\noindent and subtracting the consecutive integers

$$\rho = (n-k, \ldots, 1,0,-1, \ldots ,-k)$$

\noindent we work with
$$(t+r +n-k-\eta_{n-k},\ldots , t+r+1-\eta_{1}, 0,-1,\ldots,-k).$$

To finish we study the values of $t$ we have repetitions, decreasing sequence or if we need to make inversions.

\end{proof}

\begin{Proposition}\label{proAux} Let $Z$ be a subset of $G=Gr(k,n)$ of pure codimension $n-k$ such that \\ $Z \simeq Gr(k-1,n-1)$, then for $0\leq i_{1},\ldots, i_{s} \leq n-k$ and $0< i < m,$

$$H^{i}(G, \wedge^{i_{1}} Q^{\vee}\otimes \cdots \otimes \wedge^{i_{s}} Q^{\vee}\otimes \mathcal{I}_{Z}(t)) = 0, \ \ \mbox{for} \ \ t\neq \dfrac{-i+1}{k}, \ \ \mbox{and} \ \ t \in \mathbb{Z}.$$

\end{Proposition}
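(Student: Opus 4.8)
```latex
\textbf{Proof proposal.} The plan is to use the subscheme $Z \simeq Gr(k-1,n-1)$ sitting in $G = Gr(k,n)$ as a hyperplane-type section, and to reduce the vanishing of the twisted cohomology of $\mathcal{I}_Z$ to the vanishings already packaged in Corollary \ref{corol} and Theorem \ref{themMain}. First I would set $\sE := \wedge^{i_{1}} Q^{\vee}\otimes \cdots \otimes \wedge^{i_{s}} Q^{\vee}$ and write the ideal-sheaf sequence (\ref{ideal sequence}) twisted by $\sE(t)$:
\begin{equation*}
0 \to \sE \otimes \mathcal{I}_{Z}(t) \to \sE(t) \to \sE \otimes \mathcal{O}_{Z}(t) \to 0.
\end{equation*}
The associated long exact sequence relates $H^{i}(G, \sE \otimes \mathcal{I}_{Z}(t))$ to $H^{i}(G,\sE(t))$ and to $H^{i}(Z, \sE|_{Z}(t))$, so the strategy is to kill the two outer terms for $0 < i < m$ and $t \neq (-i+1)/k$.

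For the middle term $H^{i}(G,\sE(t))$, by the decomposition (\ref{eq3}) we have $\sE = \oplus_{\eta} \mathbb{S}_{\eta} Q^{\vee}$, and Theorem \ref{themMain} (applied with $TG$ replaced by the trivial twist, or equivalently via the Borel-Weil-Bott analysis of $\mathbb{S}_{\eta}Q^{\vee}\otimes \mathcal{O}_G(t)$ carried out exactly as in Corollary \ref{corol}) records in which single cohomological degree each summand can be nonzero and for which $t$. The point is that for a Schur summand $\mathbb{S}_{\eta}Q^{\vee}$, after subtracting $\rho$ the sequence in Corollary \ref{corol} produces a nonzero group in degree $i$ precisely when an inversion count forces it, and the borderline (repeated-entry) values of $t$ are of the form $t = \eta_{l}-l+j$; translating the exceptional value $t = (-i+1)/k$ into this framework should exhibit it as the unique $t$ (for fixed $i$) at which some summand survives in degree $i$. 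For the restriction term $H^{i}(Z,\sE|_{Z}(t))$, I would use $Z \simeq Gr(k-1,n-1)$ and the fact that $Q^{\vee}|_{Z}$ is built from the analogous tautological/quotient bundles on $Gr(k-1,n-1)$, so that Bott's formula (Theorem \ref{BottGr}) and the same Borel-Weil-Bott computation apply on $Z$; the dimension of $Z$ is $k(n-k)$, and one checks the intermediate-degree vanishing there survives for the same range $0<i<m$ away from the exceptional $t$.

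The main obstacle I expect is bookkeeping the restriction $\sE|_{Z}$ and matching the two exceptional loci. Concretely, one must verify that the $t$-value $(-i+1)/k$ singled out in the statement is simultaneously (a) the only integer at which the bulk term $H^{i}(G,\sE(t))$ can fail to vanish in degree $i$, and (b) harmless for the restriction term, or else that both nonzero contributions cancel along the connecting map so that $H^{i}(G,\sE\otimes \mathcal{I}_Z(t))$ vanishes away from it. The appearance of the denominator $k$ strongly suggests that the exceptional $t$ arises from equating an inversion-count degree $i$ with a multiple of $k$ coming from the spacing in $\rho$ (the $(k+1)$-blocks in Theorem \ref{themMain}), so I would pin down this value by solving $i = (j-1)(k+1)$ or $i=(j-1)(k+1)+k$ against the borderline condition $t = \eta_l - l + j$ and checking that only $t=(-i+1)/k$ makes the restricted and ambient contributions coincide. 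Once the single surviving value of $t$ is isolated on both $G$ and $Z$, the long exact sequence delivers $H^{i}(G,\sE\otimes\mathcal{I}_Z(t)) = 0$ for all other integers $t$ and all $0<i<m$, which is the claim.
```
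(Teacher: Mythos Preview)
Your approach via the three--term ideal sheaf sequence is genuinely different from what the paper does, and the obstacle you yourself flag is real. The paper never restricts $\sE$ to $Z$ and never works on $Gr(k-1,n-1)$ at all. Instead, it uses that $Z$ is the zero locus of a generic section of the globally generated bundle $Q$, so that $\mathcal{I}_Z$ is resolved by the Koszul complex
\[
0 \to \wedge^{n-k}Q^{\vee} \to \cdots \to \wedge^{2}Q^{\vee} \to Q^{\vee} \to \mathcal{I}_Z \to 0.
\]
Tensoring by $\sE(t)$ keeps every term of the form $\wedge^{i_1}Q^{\vee}\otimes\cdots\otimes\wedge^{i_s}Q^{\vee}\otimes\wedge^{p}Q^{\vee}(t)$ on $G$, and then the vanishing of $H^{i+p-1}$ of each such term for $t\neq(-i+1)/k$ is read off directly (after Serre duality) from Ottaviani's Lemma~1.3 in \cite{Ott2}; a hypercohomology chase (Lemma~1.1(i) of \cite{Ott2}) then gives $H^i(G,\sE\otimes\mathcal{I}_Z(t))=0$. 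In particular, the special value $t=(-i+1)/k$ is not discovered by matching inversion counts in Theorem~\ref{themMain}; it is the exceptional value built into Ottaviani's lemma for products of exterior powers of $Q$.

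What this buys over your route is precisely the elimination of the restriction step. In your plan you must control $H^{i-1}(Z,\sE|_Z(t))$ and $H^{i}(Z,\sE|_Z(t))$ as well as $H^{i}(G,\sE(t))$, and each of these has its own set of exceptional $t$ coming from Borel--Weil--Bott on $Gr(k-1,n-1)$ (with parameter $k-1$ rather than $k$). You would then have to argue that all these exceptional sets collapse to the single value $(-i+1)/k$, or that the surviving contributions cancel under the connecting map; neither is carried out, and the heuristic linking $(-i+1)/k$ to the $(k+1)$--block structure of Theorem~\ref{themMain} does not by itself produce such a cancellation. The Koszul resolution sidesteps this entirely: all cohomology stays on $G$, all terms are of one uniform type, and a single cited lemma handles them.
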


\begin{proof} In fact, we can consider $Z \simeq Gr(k-1,n-1)$ as the zero locus of a generic section $s$ of $S$ ($Q$ is a globally generated), then we have the Koszul complex of $s$, that is an exact sequence

$$0 \to \wedge^{n-k} Q^{\vee} \to \cdots \to \wedge^{2} Q^{\vee} \to Q^{\vee} \to \mathcal{I}_{Z} \to 0. $$

Tensoring it by $E(t)$, where $E = \wedge^{i_{1}}Q^{\vee}\otimes \cdots \otimes \wedge^{i_{s}} Q^{\vee}(t),$ and  $t \in \mathbb{Z}$

$$0 \to E \otimes \wedge^{n-k} Q^{\vee}(t) \to \cdots \to E \otimes \wedge^{2} Q^{\vee}(t) \to E \otimes Q^{\vee}(t) \to E \otimes \mathcal{I}_{Z}(t) \to 0. $$


For $0< i < m$ we have using Serre duality

$$H^{i+p-1}(G, E \otimes \wedge^{p} Q^{\vee}(t)) = H^{m-i-p+1}(G, \wedge^{i_{1}}Q \otimes \cdots \wedge^{i_{s}}Q \otimes \wedge^{p}Q(-t-n-1))=0,$$

\noindent for $t \neq \dfrac{-i+1}{k},$ with $t\in \mathbb{Z}$, and $1 \leq p \leq n-k$ according to \cite[Lemma 1.3, p. 322]{Ott2}. Thus from \cite[Lemma 1.1(i), p. 318]{Ott2} implies $H^{i}(G, E\otimes \mathcal{I}_{Z}(t)) = 0$ for $0< i < m$ as desired.

\end{proof}

As a consequence of the Proposition \ref{proAux} we get

\begin{Corollary} $Z$ is aCM.

\end{Corollary}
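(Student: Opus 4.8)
The plan is to verify the cohomological characterisation of aCM schemes recalled in the preliminaries, namely that $Z$ is aCM precisely when $H^{i}_{\ast}(G,\mathcal{I}_{Z})=0$ for $1\leq i\leq \dim Z$. First I would record that $Z\cong Gr(k-1,n-1)$ has dimension $\dim Z=k(n-k)=m-(n-k)$, so that the whole range $1\leq i\leq \dim Z$ lies inside the interval $1\leq i\leq m-1$ on which Lemma \ref{vanish}(iii) controls $\mathcal{O}_{G}$; moreover, since $G$ is itself aCM under the Pl\"ucker embedding, the intermediate cohomology of $\mathcal{I}_{Z}$ may be computed on $G$ rather than on the ambient $\p^{N}$. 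Thus the goal reduces to showing $H^{i}(G,\mathcal{I}_{Z}(t))=0$ for all $t\in\Z$ and all $1\leq i\leq k(n-k)$.

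The main input is Proposition \ref{proAux} with $E=\mathcal{O}_{G}$ (all $i_{j}=0$), which already yields $H^{i}(G,\mathcal{I}_{Z}(t))=0$ for $0<i<m$ whenever $kt\neq 1-i$. The only integral exceptions lying in the relevant range are the finitely many pairs $(i,t)=(1,0)$ and $(i,t)=(1+jk,-j)$ with $1\leq j\leq n-k-1$; these are exactly the twists not reached by the Proposition, and they must be disposed of by hand. For an exceptional pair with $i\geq 2$ I would twist the ideal sequence $0\to\mathcal{I}_{Z}\to\mathcal{O}_{G}\to\mathcal{O}_{Z}\to 0$ by $\mathcal{O}_{G}(t)$; Lemma \ref{vanish}(iii) kills $H^{i-1}(G,\mathcal{O}_{G}(t))$ and $H^{i}(G,\mathcal{O}_{G}(t))$ in this range, giving an isomorphism $H^{i}(G,\mathcal{I}_{Z}(t))\cong H^{i-1}(Z,\mathcal{O}_{Z}(t))$. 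Since $Z\cong Gr(k-1,n-1)$ is again a Grassmann manifold and $\mathcal{O}_{G}(1)|_{Z}$ is a positive multiple of its ample generator, Lemma \ref{vanish}(iii) applied to $Z$ forces $H^{i-1}(Z,\mathcal{O}_{Z}(t))=0$ for $1\leq i-1\leq \dim Z-1$, i.e. for $2\leq i\leq k(n-k)$; hence all of these groups vanish. The remaining pair $(1,0)$ is handled by connectedness: the same ideal sequence gives $H^{1}(G,\mathcal{I}_{Z})=\coker\big(H^{0}(G,\mathcal{O}_{G})\to H^{0}(Z,\mathcal{O}_{Z})\big)$, and since $G$ and $Z$ are connected this restriction is an isomorphism $\mathbb{C}\to\mathbb{C}$, so the cokernel vanishes.

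The step I expect to be the real obstacle is the case $i=1$ with positive twists, which amounts to the surjectivity of the restriction maps $H^{0}(G,\mathcal{O}_{G}(t))\to H^{0}(Z,\mathcal{O}_{Z}(t))$ for $t>0$, i.e. a projective-normality statement for $Z\subset G$. This is precisely what Proposition \ref{proAux} buys us (it is the content of the vanishing for $t\neq 0$), so the weight of the argument rests there, while the higher $i$ collapse cleanly onto the intermediate-cohomology vanishing of the two Grassmannians $G$ and $Z$. A secondary point to check carefully is the bookkeeping identifying the integral solutions of $kt=1-i$ and confirming that each one lands in a range where either the ideal-sequence isomorphism or the connectedness argument applies; this is what guarantees that no twist escapes both Proposition \ref{proAux} and the supplementary arguments.
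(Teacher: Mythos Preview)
Your argument is correct, but it takes a more circuitous route than the paper. The paper's proof simply reruns the Koszul-complex argument behind Proposition~\ref{proAux} in the special case $E=\mathcal{O}_{G}$: twisting the Koszul resolution of $\mathcal{I}_{Z}$ by $\mathcal{O}_{G}(t)$ and applying Ottaviani's vanishing \cite[Lemmas~1.1(i) and 1.3]{Ott2} (via Serre duality) directly yields $H^{i+p-1}(G,\wedge^{p}Q^{\vee}(t))=0$ for all $t$ and $1\le p\le n-k$, hence $H^{i}_{\ast}(G,\mathcal{I}_{Z})=0$ for $0<i<m$ with no exceptional twists to patch. By contrast, you invoke Proposition~\ref{proAux} as a black box, inherit its restriction $t\neq (1-i)/k$, and then close the finitely many gaps by comparing $\mathcal{I}_{Z}$ with $\mathcal{O}_{G}$ and $\mathcal{O}_{Z}$ through the ideal sequence together with Lemma~\ref{vanish}(iii) on both Grassmannians. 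What your approach buys is modularity---you never reopen the Koszul computation---at the cost of the extra bookkeeping for the exceptional pairs; the paper's approach is shorter because in the untwisted case $E=\mathcal{O}_{G}$ Ottaviani's lemma already gives the vanishing uniformly in $t$, so no patching is needed.
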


\begin{proof} We consider the long exact sequence, after tensoring by $\mathcal{O}_{G}(t)$, $t \in \mathbb{Z}$

$$0 \to \wedge^{n-k} Q^{\vee}(t) \to \cdots \to \wedge^{2} Q^{\vee}(t) \to Q^{\vee}(t) \to \mathcal{I}_{Z}(t) \to 0. $$

From \cite[Lemma 1.3 and Lemma 1.1(i)]{Ott2} with $1\leq p \leq n-k,$ and using Serre duality

$$H^{i+p-1}(G,\wedge^{p}Q^{\vee}(t)) = H^{m-i-p+1}(G, \wedge^{p}Q(-t-n-1))=0.   $$

That implies $H^{i}(G, \mathcal{I}_{Z}(t)) = 0$ for $0< i < m.$ Therefore $Z$ is ACM.

\end{proof}

\begin{thm}\label{reci_grass} Let $\sF$ be a codimension one holomorphic distribution on $G = Gr(k,n), n>2,$ such that its singular locus $Z$ has pure codimension $n-k$. Given $i_{1},\ldots, i_{s}$ integer numbers such that $0\leq i_{1},\ldots, i_{s} \leq n-k$, and $\eta= (\eta_{1},\ldots ,\eta_{n-k})$ any partition of $i_{1}+ \cdots + i_{s}$ with $\eta_{1}\geq\cdots \geq\eta_{n-k}.$ Suppose that: \\ 
	
	\begin{enumerate}
		
		\item[a)] $H^{1}(G, \wedge^{i_{1}} Q^{\vee}\otimes \cdots \otimes \wedge^{i_{s}} Q^{\vee}\otimes T_{\sF}(t)) = 0;$ for $t>\eta_{1}-1-r$; \\
		
		\item[b)] If $\mu_{j}-j <t<\mu_{j-1}-k-j$ with $2\leq j \leq n-k$ suppose		
		
		$$H^{(j-1)(k+1)}(G, \wedge^{i_{1}} Q^{\vee}\otimes \cdots \otimes \wedge^{i_{s}} Q^{\vee}\otimes T_{\sF}(t)) = 0;$$ 
		
		\item[c)] If $t= \mu_{j}-j-k, \ \ 1\leq j \leq n-k$ with $\mu_{j}-\mu_{j+1} >k-1$ suppose
		
		$$H^{(j-1)(k+1)+k}(G, \wedge^{i_{1}} Q^{\vee}\otimes \cdots \otimes \wedge^{i_{s}} Q^{\vee}\otimes T_{\sF}(t)) = 0;$$
		 
		\item[d)] $H^{i}(G, \wedge^{i_{1}} Q^{\vee}\otimes \cdots \otimes \wedge^{i_{s}} Q^{\vee}\otimes T_{\sF}(t)) = 0$ for $t = \dfrac{-i+2-rk}{k}.$ \\
		
\noindent Then $T_{\sF}$ splits as a direct sum of line bundles, where $r$ is the degree of the distribution.
\end{enumerate}
\end{thm}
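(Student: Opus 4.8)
The plan is to apply Ottaviani's generalized Horrocks splitting criterion on the Grassmannian (\cite[Theorem 2.1, p. 326]{Ott2}), which asserts that a vector bundle $E$ on $G=Gr(k,n)$ splits as a direct sum of line bundles precisely when a family of intermediate cohomology groups of the form $H^i(G, \wedge^{i_1}Q^\vee \otimes \cdots \otimes \wedge^{i_s}Q^\vee \otimes E(t))$ vanish, for $0\leq i_1,\ldots,i_s \leq n-k$, all $t\in\mathbb{Z}$, and $0<i<m$. So the entire task reduces to verifying that the hypotheses (a)--(d), together with the structural information coming from the codimension-one distribution, force all of these groups to vanish for $T_\sF$.

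First I would exploit the defining sequence \eqref{eq:Dist cod1} of the codimension-one distribution, twisted appropriately,
\begin{equation*}
0 \to T_\sF \to TG \to \mathcal{I}_Z(r) \to 0,
\end{equation*}
tensoring it by $E=\wedge^{i_1}Q^\vee\otimes\cdots\otimes\wedge^{i_s}Q^\vee$ and by $\mathcal{O}_G(t)$, and pass to the long exact cohomology sequence. This links $H^i(G, E\otimes T_\sF(t))$ to $H^{i-1}(G, E\otimes \mathcal{I}_Z(r+t))$, to $H^i(G, E\otimes TG(t))$, and to $H^i(G, E\otimes \mathcal{I}_Z(r+t))$. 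The two ideal-sheaf terms are controlled by Proposition \ref{proAux}, which gives $H^i(G, E\otimes \mathcal{I}_Z(t))=0$ for $0<i<m$ away from the exceptional value $t=\frac{-i+1}{k}$; the tangent-bundle terms $H^i(G, E\otimes TG(t))$ are exactly the groups computed in Theorem \ref{themMain}, whose vanishing ranges (cases (a)--(e)) I would read off according to the numerical regime of $t$.

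The strategy is then a case analysis on $t$ matching the regimes of Theorem \ref{themMain}: in the generic ranges where the $TG$-cohomology already vanishes (the repeated-entry case (a) and the pure-degree ranges), the long exact sequence directly propagates vanishing from the $TG$ and $\mathcal{I}_Z$ terms to $T_\sF$; in the boundary and inversion regimes (cases (b), (c), (d) of Theorem \ref{themMain}) a single cohomology group of $TG\otimes E$ survives in degree $i=(j-1)(k+1)$ or $(j-1)(k+1)+k$ or $m$, and it is precisely there that hypotheses (a)--(d) of the present theorem are imposed to kill the corresponding $H^i(G, E\otimes T_\sF(t))$. The exceptional value $t=\frac{-i+1}{k}$ excluded in Proposition \ref{proAux}, shifted by the twist $r$, is accounted for by hypothesis (d) at $t=\frac{-i+2-rk}{k}$, and hypothesis (a) handles the top range $t>\eta_1-1-r$ via Corollary \ref{corol}.

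The main obstacle, and the delicate bookkeeping, is ensuring that \emph{every} pair $(i,t)$ with $0<i<m$ is covered by exactly one regime, so that no intermediate cohomology group of $T_\sF$ escapes the argument. In particular one must check that the degree shifts induced by the twist $r$ in the normal-sheaf term $\mathcal{I}_Z(r)$ align the exceptional locus of Proposition \ref{proAux} with hypothesis (d), and that the boundary values of $t$ separating the cases of Theorem \ref{themMain} are consistently assigned; the interplay between the permutation-length indices $i=(j-1)(k+1)$, $(j-1)(k+1)+k$ and the full range $0<i<m$ is where the verification is most error-prone. Once the case division is shown to be exhaustive and each surviving group is annihilated by the appropriate hypothesis or auxiliary result, Ottaviani's criterion applies and $T_\sF$ splits as a direct sum of line bundles.
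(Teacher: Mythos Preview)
Your proposal is correct and follows essentially the same route as the paper: tensor the distribution sequence \eqref{eq:Dist cod1} by $\wedge^{i_1}Q^\vee\otimes\cdots\otimes\wedge^{i_s}Q^\vee(t)$, use Corollary~\ref{corol} (for $H^0$ of the ideal sheaf in the range $t\le\eta_1-1-r$), Proposition~\ref{proAux} (for $H^{i-1}$ of the ideal sheaf when $1<i<m$), and Theorem~\ref{themMain} (for the $TG$ terms), letting hypotheses (a)--(d) kill exactly the residual groups, and conclude via Ottaviani's criterion. One small clarification: in the $i=1$ case the paper uses Corollary~\ref{corol} for the complementary range $t\le\eta_1-1-r$ to dispose of $H^0(E\otimes\mathcal{I}_Z(t+r))$, while hypothesis~(a) is invoked directly for $t>\eta_1-1-r$; your phrasing ``hypothesis (a) handles the top range $\ldots$ via Corollary~\ref{corol}'' slightly conflates these two halves.
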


\begin{proof} We consider the exact sequence defining the holomorphic distribution after twisting by

$$\wedge^{i_{1}} Q^{\vee}\otimes \cdots \otimes \wedge^{i_{s}}Q^{\vee}(t),$$

\noindent for $0\leq i_{1},\ldots, i_{s} \leq n-k$ and $t \in \mathbb{Z}.$

$$0 \to \wedge^{i_{1}} Q^{\vee}\otimes \cdots \otimes \wedge^{i_{s}}Q^{\vee}\otimes T_{\sF}(t) \to \wedge^{i_{1}} Q^{\vee}\otimes \cdots \otimes \wedge^{i_{s}}Q^{\vee} \otimes TG(t) \to \wedge^{i_{1}} Q^{\vee}\otimes \cdots \otimes \wedge^{i_{s}}Q^{\vee}\otimes \mathcal{I}_{Z}(t+r) \to 0. $$

Now we take the cohomology groups

$$\begin{array}{rccc}

H^{i-1}(G,\wedge^{i_{1}} Q^{\vee}\otimes \cdots \otimes \wedge^{i_{s}}Q^{\vee}\otimes \mathcal{I}_{Z}(t+r)) \to & H^{i}(G,\wedge^{i_{1}} Q^{\vee}\otimes \cdots \otimes \wedge^{i_{s}}Q^{\vee}\otimes T_{\sF}(t)) \to \\ \\

\to & H^{i}(G, \wedge^{i_{1}} Q^{\vee}\otimes \cdots \otimes \wedge^{i_{s}}Q^{\vee} \otimes TG(t)).

\end{array}$$

For $i=1$ and $t\leq\eta_{1}-1-r$ we have $H^{0}(G,\wedge^{i_{1}} Q^{\vee}\otimes \cdots \otimes \wedge^{i_{s}} Q^{\vee} \otimes \mathcal{O}_{G}(r+t)) = 0$ by Corollary \ref{corol} and using the standard exact sequence

$$0 \to \mathcal{I}_{Z} \to \mathcal{O}_{G} \to \mathcal{O}_{Z} \to 0$$

\noindent after twisting by $\wedge^{i_{1}} Q^{\vee}\otimes \cdots \otimes \wedge^{i_{s}}Q^{\vee}(t+r)$ we get

\begin{equation}\label{eq4}
H^{0}(G,\wedge^{i_{1}} Q^{\vee}\otimes \cdots \otimes \wedge^{i_{s}}Q^{\vee}\otimes \mathcal{I}_{Z}(t+r)) =0, \ \ \ \ t\leq\eta_{1}-1-r.
\end{equation}

We observe that the Theorem \ref{themMain} implies that $H^{1}(G,\wedge^{i_{1}} Q^{\vee}\otimes \cdots \otimes \wedge^{i_{s}}Q^{\vee}\otimes TG(t))=0$ for all $t \in \mathbb{Z}$. Using hypothesis $a)$ and display (\ref{eq4}) we get

$$H^{1}(G,\wedge^{i_{1}} Q^{\vee}\otimes \cdots \otimes \wedge^{i_{s}}Q^{\vee}\otimes T_{\sF}(t)) =0 \ \ for \ \ all \ \ t \in \mathbb{Z}.$$

For $1<i<m$ the Proposition \ref{proAux} implies $H^{i-1}(G,\wedge^{i_{1}} Q^{\vee}\otimes \cdots \otimes \wedge^{i_{s}}Q^{\vee}\otimes \mathcal{I}_{Z}(t+r)) = 0 $ for $t \neq \dfrac{-i+2-rk}{k}.$ On the other hand $H^{i}(G, \wedge^{i_{1}} Q^{\vee}\otimes \cdots \otimes \wedge^{i_{s}}Q^{\vee} \otimes TG(t)) =0$ when satisfies the Theorem \ref{themMain}. The hypotheses complete the prove.

\end{proof}

	In the case where the codimension of the distribution is $nk-k^2$ with $k \geq 1, \; n \geq 3,$ the tangent bundle of this distribution coincides with a universal quotient bundle twisted about the Grassmannian as can be seen in the example below.
		
		\begin{Example}
			The tangent sheaf $T_{\sF}$ of a distribution of codimension $c=nk-k^2$  on $G=Gr(k,n)$ is the dual of the universal quotient bundle twisted by $\mathcal{O}_{G}(-t),$ for all $t \geq 0$. 
			Indeed, since the quotient bundle is globally generated, $\mathcal{Q}(t)$ is globally generated, for all $t \geq 0$. Consequently,
			$\mathcal{Q}(t) \otimes TG$ is also globally generated, since $TG$ is globally generated. 
			Now, to show that there is such distribution, we apply \cite[Theorem A.3]{MOJ} with $L =  \mathcal{O}_{G}.$ Finally, we get
			$$ 0 \to \mathcal{Q}^{\vee}(-t) \to TG \to N_{\sF} \to 0.$$
		\end{Example} 
		
		\medskip
		
		A homological criterion for connectedness of the singular scheme of codimension one distributions on $\p^3 \simeq Gr(0,3)$ was proved by O. Calvo-Andrade, M. Corr\^ea and M. Jardim in \cite{MOJ}. This criterion was extended for others smooth weighted projective complete intersection Fano threefold with Picard number one in \cite{AMS}, by A. Cavalcante, M. Corrêa and S. Marchesi. Now, we observe that the singular locus of a codimension one
		distribution $\sF$ on the Grassmannian $Gr(2,3) \simeq \breve{\mathbb{P}^{3}}$ is connected recovering the result on $\mathbb{P}^{3}$, see \cite{MOJ}.

		\begin{thm} \label{connected cod1 gr}
			Let $\sF$ be a distribution of codimension one on $X=Gr(2,3)$ and singular scheme $Z.$ If $h^2(T_\sF(-r))=0$ and $C \subset X, \; C  \neq \emptyset,$ then $Z$ is connected and of pure dimension 1, so that $T_\sF$ is locally free.
			
		\end{thm}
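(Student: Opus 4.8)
The plan is to reduce both conclusions, connectedness and purity, to the single vanishing $H^{1}(X,\mathcal{I}_{Z})=0$, and then to extract that vanishing from the hypothesis $h^{2}(T_{\sF}(-r))=0$ together with the cohomology of $X=Gr(2,3)$. Here $X\cong\breve{\mathbb{P}^{3}}\cong\mathbb{P}^{3}$ has dimension $m=(k+1)(n-k)=3$, so a codimension one distribution is presented by \eqref{eq:Dist cod1}, namely $0\to T_{\sF}\to TX\to\mathcal{I}_{Z}(r)\to 0$ with $T_{\sF}$ reflexive of rank $2$. First I would collect the ambient vanishings I need: Lemma \ref{vanish}(iii) gives $H^{1}(X,\mathcal{O}_{X})=0$, and the cohomology of the twisted tangent bundle satisfies $H^{1}(X,TX(t))=0$ for every $t\in\Z$ (this is exactly the vanishing already used in the proof of Theorem \ref{reci_grass}, and under the identification with $\mathbb{P}^{3}$ it is immediate from the Euler sequence). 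Moreover $C\neq\emptyset$ forces $Z\supseteq C\neq\emptyset$, so $Z$ is a nonempty proper subscheme and $H^{0}(X,\mathcal{I}_{Z})=H^{0}(X,\mathcal{I}_{C})=0$.

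I would then establish connectedness. Applying the long exact sequence to the ideal sequence \eqref{ideal sequence} with $Y=Z$ and using $H^{0}(X,\mathcal{I}_{Z})=0$, $H^{0}(X,\mathcal{O}_{X})=\mathbb{C}$ and $H^{1}(X,\mathcal{O}_{X})=0$, I obtain
\[
0\to\mathbb{C}\to H^{0}(X,\mathcal{O}_{Z})\to H^{1}(X,\mathcal{I}_{Z})\to 0,
\]
so that $h^{0}(\mathcal{O}_{Z})=1+h^{1}(\mathcal{I}_{Z})$ and connectedness is equivalent to $H^{1}(X,\mathcal{I}_{Z})=0$. To get the latter I twist \eqref{eq:Dist cod1} by $\mathcal{O}_{X}(-r)$, yielding $0\to T_{\sF}(-r)\to TX(-r)\to\mathcal{I}_{Z}\to 0$, and read off the fragment
\[
H^{1}(X,TX(-r))\to H^{1}(X,\mathcal{I}_{Z})\to H^{2}(X,T_{\sF}(-r)).
\]
The left-hand term vanishes by the previous paragraph and the right-hand term vanishes by hypothesis, so $H^{1}(X,\mathcal{I}_{Z})=0$, hence $h^{0}(\mathcal{O}_{Z})=1$ and $Z$ is connected.

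Finally I would upgrade this to purity and local freeness. Feeding $H^{1}(X,\mathcal{I}_{Z})=0$ into the cohomology of \eqref{sequence I},
\[
0\to H^{0}(X,\mathcal{I}_{Z})\to H^{0}(X,\mathcal{I}_{C})\to H^{0}(X,\mathcal{U})\to H^{1}(X,\mathcal{I}_{Z}),
\]
and using $H^{0}(X,\mathcal{I}_{Z})=H^{0}(X,\mathcal{I}_{C})=0$ (this is where $C\neq\emptyset$ is indispensable), I conclude $H^{0}(X,\mathcal{U})=0$. As $\mathcal{U}$ is supported in dimension zero, $H^{0}(X,\mathcal{U})=0$ forces $\mathcal{U}=0$; then \eqref{OZ->OC} shows $\mathcal{O}_{Z}\cong\mathcal{O}_{C}$, so $Z=C$ has pure dimension $1$, i.e.\ pure codimension $2$. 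Lemma \ref{local-free-tg-sheaf} then gives that $T_{\sF}$ is locally free, which finishes the proof. The crux is the twist-independent vanishing $H^{1}(X,TX(-r))=0$: once this is in hand, the bound $h^{2}(T_{\sF}(-r))=0$ alone suffices and everything else is a chase through the long exact sequences above. I expect this input to be the only genuinely nontrivial ingredient; it is cleanest to obtain it by transporting the computation to $\mathbb{P}^{3}$ via $Gr(2,3)\cong\mathbb{P}^{3}$, where $H^{1}(T\mathbb{P}^{3}(t))=H^{1}(\Omega^{2}_{\mathbb{P}^{3}}(t+4))=0$ for all $t$ by Bott's formula, but it also follows intrinsically from Theorem \ref{themMain} (with trivial exterior factors), since for $Gr(2,3)$ the only possibly nonzero cohomology of $TG(t)$ occurs in degrees $0,2,3$.
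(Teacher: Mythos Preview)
Your argument is correct and follows essentially the same route as the paper: twist \eqref{eq:Dist cod1} by $\mathcal{O}_X(-r)$, use $H^1(X,TX(-r))=0$ together with the hypothesis to obtain $H^1(X,\mathcal{I}_Z)=0$, deduce $h^0(\mathcal{O}_Z)=1$, and then rule out zero-dimensional components via $C\neq\emptyset$ before invoking Lemma~\ref{local-free-tg-sheaf}. The only cosmetic differences are that the paper cites Lemma~\ref{vanish}(i) directly for the vanishing of $H^1(X,TX(-r))$ (rather than Theorem~\ref{themMain} or the identification with $\mathbb{P}^3$), and handles the purity step through the sequence \eqref{OZ->OC} on structure sheaves instead of your use of \eqref{sequence I} on ideal sheaves; both routes are equivalent.
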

		\begin{proof}
			Consider the exact sequence (\ref{eq:Dist cod1}). Twisting it by $\mathcal{O}_{X}(-r)$ and passing to cohomology we obtain,
			$$ H^1 (X,TX(-r)) \to H^1(X,\mathcal{I}_Z) \to  H^2(X,T_{\sF}(-r)) \to H^2(X,TX(-r)).$$  
			From item $(i)$ of the Lemma \ref{vanish}, we get that $ H^1 (X,TX(-r))=0.$ If $h^2(X,T_\sF(-r))=0$, then $h^1(X,\mathcal{I}_{Z})=0.$ It follows from the standard sequence
			$$ 0 \to \mathcal{I}_{Z} \to \mathcal{O}_{X} \to \mathcal{O}_{Z} \to 0 $$
			that
			$$ H^0(X,\mathcal{O}_{X} )\to H^0(X,\mathcal{O}_{Z})\to 0, $$
			hence $h^0(X,\mathcal{O}_{Z})=1$. From the sequence (\ref{OZ->OC}), we get
			$$ 0 \to H^0(X,\mathcal{U}) \to H^0(X,\mathcal{O}_{Z}) \to H^0(X,\mathcal{O}_{C}) \to 0.$$
			Thus either $h^0(X,\mathcal{O}_{C})=1$, and $\mathcal{U}=0$ and $C$ is connected, or
			$\mathrm{length}(\mathcal{U})=1$ and $C$ is empty. This second possibility is not valid because by
			hypothesis $C \neq \emptyset.$
			It follows that $Z=C$ must be connect and of pure dimension $1,$ and thus, by Lemma \ref{local-free-tg-sheaf}, $T_\sF$ is locally free.
			

		\end{proof}
		
		As an immediate consequence, we obtain the following corollary.

		\begin{Corollary}
			Let $\sF$ be a codimension one distribution with singular scheme $Z$ on $X=Gr(2,3)$ the following holds:

			\begin{itemize}
				\item [(i)] If the tangent sheaf splits as a sum of line bundles, then $Z$ is connected;
				\item [(ii)] If $\sF$ has locally free tangent sheaf and
				$T_\sF^{\vee}$ is ample, then $Z$ is connected.
			\end{itemize} 
		\end{Corollary}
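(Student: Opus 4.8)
The plan is to reduce both items to the connectedness of the singular scheme $Z$ on $X=Gr(2,3)\cong\breve{\p^{3}}\cong\p^{3}$, a threefold, and to feed the two cases into different machines: item $(i)$ into the criterion of Theorem \ref{connected cod1 gr}, and item $(ii)$ into a positivity argument. In both cases $T_{\sF}$ is locally free, so by Lemma \ref{local-free-tg-sheaf} the scheme $Z$ has pure codimension $2$; hence the maximal $0$-dimensional subsheaf in (\ref{OZ->OC}) vanishes and $C=Z$. Thus the only substantive points are that $Z$ is non-empty and connected.

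First I would settle non-emptiness (i.e. $C=Z\neq\emptyset$) uniformly by a Chern class obstruction. If $Z=\emptyset$ then $N_{\sF}=\mathcal{O}_{X}(r)$ is a line bundle and $0\to T_{\sF}\to TX\to\mathcal{O}_{X}(r)\to 0$ presents a rank two bundle $T_{\sF}$ with $c(T_{\sF})=(1+H)^{4}(1+rH)^{-1}$; vanishing of the degree three part forces $r^{3}-4r^{2}+6r-4=(r-2)(r^{2}-2r+2)=0$, whose only integral root is $r=2$, so $c_{1}(T_{\sF})=2$ and $c_{2}(T_{\sF})=2$. In case $(i)$ a splitting $T_{\sF}=\mathcal{O}(a_{1})\oplus\mathcal{O}(a_{2})$ would require $a_{1}+a_{2}=2$ and $a_{1}a_{2}=2$, impossible over $\Z$; in case $(ii)$ the value $c_{1}(T_{\sF})=2>0$ contradicts the ampleness of $T_{\sF}^{\vee}$, whose determinant $\mathcal{O}(-c_{1})$ would then be ample. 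Hence $Z\neq\emptyset$. For item $(i)$ it only remains to verify $h^{2}(X,T_{\sF}(-r))=0$, which is immediate once $T_{\sF}$ is a sum of line bundles since $H^{2}(\p^{3},\mathcal{O}(t))=0$ for all $t$; Theorem \ref{connected cod1 gr} then yields that $Z$ is connected.

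For item $(ii)$ I would \emph{not} try to verify the cohomological hypothesis of Theorem \ref{connected cod1 gr}. Indeed, twisting (\ref{eq:Dist cod1}) by $\mathcal{O}_{X}(-r)$ and using $H^{1}(TX(-r))=0$ (Lemma \ref{vanish}(i)) together with $H^{2}(TX(-r))=H^{2}(\Omega^{2}_{\p^{3}}(4-r))=0$ (Bott, valid since here $r=4-c_{1}(T_{\sF})>4$), one obtains $H^{1}(\mathcal{I}_{Z})\cong H^{2}(T_{\sF}(-r))$, so that hypothesis is \emph{equivalent} to the conclusion and cannot serve as an input. Instead I would exploit ampleness geometrically: by definition $Z=\Sing(\sF)$ is exactly the degeneracy scheme $\Sing(\phi^{\vee})$ of the dual map $\phi^{\vee}:\Omega^{1}_{X}\to T_{\sF}^{\vee}$ of rank three to rank two (notation of the Eagon--Northcott subsection), the locus $D_{1}(\phi^{\vee})$ where $\phi^{\vee}$ fails to be surjective, of expected codimension $(3-1)(2-1)=2$. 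Its sheaf of homomorphisms is $\mathcal{H}om(\Omega^{1}_{X},T_{\sF}^{\vee})=TX\otimes T_{\sF}^{\vee}$; since $T\p^{3}$ is ample and $T_{\sF}^{\vee}$ is ample by hypothesis, the tensor product $TX\otimes T_{\sF}^{\vee}$ is ample. As $\dim X=3>2=(3-1)(2-1)$, the Fulton--Lazarsfeld connectedness theorem for degeneracy loci shows that $Z=\Sing(\phi^{\vee})$ is connected (and non-empty), which is the assertion.

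The main obstacle is item $(ii)$. The natural temptation is to run it through Theorem \ref{connected cod1 gr} as in item $(i)$, but the computation above shows that this is circular, since on the threefold $\p^{3}$ the vanishing $h^{2}(T_{\sF}(-r))=0$ is equivalent to connectedness itself. The correct move is to route the hypothesis through a connectedness theorem for degeneracy loci, and the one genuine verification there is that $TX\otimes T_{\sF}^{\vee}$ is ample; this succeeds precisely because the tangent bundle of $\p^{3}$ is already ample, so tensoring with the ample $T_{\sF}^{\vee}$ preserves ampleness. Item $(i)$, by contrast, is routine once the Chern class obstruction excludes $Z=\emptyset$.
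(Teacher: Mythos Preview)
Your proof is correct. Item (i) follows the paper exactly: verify $h^{2}(T_{\sF}(-r))=0$ term by term for a split bundle and apply Theorem \ref{connected cod1 gr}. Your additional Chern-class check that $C=Z\neq\emptyset$ is a welcome piece of rigour that the paper leaves implicit.

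Item (ii) is where you diverge. The paper does \emph{not} go through Fulton--Lazarsfeld; it stays within Theorem \ref{connected cod1 gr} and establishes the hypothesis $h^{2}(T_{\sF}(-r))=0$ directly, via Serre duality and the Griffiths vanishing theorem: one rewrites $H^{2}(T_{\sF}(-r))\cong H^{1}(T_{\sF}^{\vee}\otimes\det T_{\sF}^{\vee})=H^{1}(K_{X}\otimes T_{\sF}^{\vee}\otimes\det T_{\sF}^{\vee}\otimes K_{X}^{-1})$, and this vanishes because $T_{\sF}^{\vee}$ is ample and $K_{X}^{-1}$ is nef. So your claim that the cohomological hypothesis ``cannot serve as an input'' is misplaced: while $h^{2}(T_{\sF}(-r))=h^{1}(\mathcal{I}_{Z})$ is indeed equivalent to the connectedness conclusion, Griffiths vanishing proves it independently from the ampleness of $T_{\sF}^{\vee}$, with no circularity. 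Your degeneracy-locus argument is a legitimate alternative and has the merit of yielding non-emptiness and connectedness simultaneously from the ampleness of $TX\otimes T_{\sF}^{\vee}$; the paper's route, by contrast, keeps both items under the single umbrella of Theorem \ref{connected cod1 gr} but relies on the non-emptiness of $C$ that you supply and it does not.
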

		
		\begin{dem}
			For the first item just assume that $T_\sF = \mathcal{O}_X (r_1)\oplus \mathcal{O}_X(r_2).$ Then, considering $r = r_1+r_2,$ we have that $h^2(T_\sF(-r))=0.$ The result follows from Theorem \ref{connected cod1 gr}.
			
			For the second item, by using Serre duality and Griffiths Vanishing Theorem \cite[Section 7.3.A, pg 335]{L}, respectively, we obtain 
			\begin{eqnarray*}
				H^2(X,T_\sF(-r)) \simeq H^{1}(X,T_\sF^{\vee}(r)\otimes K_{X})&=& H^{1}(X,T_\sF^{\vee}(r-c_1(TX))\otimes \mathcal{O}_X(c_1(TX))\otimes K_{X})\\
				&=&  H^{1}(X,T_\sF^{\vee}\otimes \det(T_\sF^{\vee})\otimes \mathcal{O}_X(c_1(TX))\otimes K_{X}) = 0. 
			\end{eqnarray*}
			The result follows from Theorem \ref{connected cod1 gr}.
		\end{dem}

		\bigskip
		

		\section{Smooth Quadric Hypersurfaces $Q_{n}$} \label{quadric}
		In this Section we define the smooth quadric hypersurface and present some properties and necessary information about it. In $\mathbb{P}^{n+1}$ we denote the smooth quadric hypersurface of dimension $n$ by $Q_{n}$ and recall this variety can by consider again as a compact homogeneous manifold of the form
		
		$$G /P,$$
		
		\noindent where $G = S(n+2, \mathbb{C})$, and $P$ is the maximal parabolic subgroup.
		
		On quadrics there is a notable class of vector bundles, which is the natural generalization of the universal bundle that called spinor bundles. Now, we recall the definition and some properties of spinor bundles on
		$Q_n,$ in different ways depending on parity. For more details see \cite{Ott1,Ott2}.

		In the odd dimension, i.e. $Q_{2k+1} (n=2k+1)$, we have an embedding
		
		$$s : Q_{2k+1} \to Gr(2^{k}-1, 2^{2k+1} -1),$$
		
		\noindent in the Grassmannian of $2^{k}-1$ subspace of $\mathbb{P}^{2k+1}$, see \cite[p. 304]{Ott1}. Then we define the spinor bundle of rank $2^{k}$ by the following pullback
		
		$$s^{\ast}U = S,$$
		
		\noindent where $U$ is the universal bundle of the Grassmannian.
		
		In the even case dimension, i.e., $Q_{2k}(n=2k)$ we have two embeddings as follows
		
		$$s^{'} = Q_{2k} \to Gr(2^{k-1}-1, 2^{k}-1); \ \ \ \ \ \  s^{''} = Q_{2k} \to Gr(2^{k-1}-1, 2^{k}-1).$$
		
		Thus, we define the spinor bundles on $Q_{2k}$ of rank $2^{k-1}$ in the same way
		
		$$S^{'} = s^{'\ast} U; \ \ \ \ \ S^{''} = s^{''\ast} U. $$

		\begin{thm}\cite[Theorem 2.1]{Ott1} The spinor bundles on $Q_{n}$ are stable.
			
		\end{thm}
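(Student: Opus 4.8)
The plan is to verify stability directly through a cohomological criterion, exploiting the homogeneity of the spinor bundles. Recall that for $n \geq 3$ one has $\Pic(Q_{n}) = \mathbb{Z}\langle \mathcal{O}_{Q_{n}}(1)\rangle$, so slope-stability is governed purely by first Chern classes: writing $\mu(E) = c_{1}(E)/\rank(E)$, a bundle $E$ is stable precisely when every coherent subsheaf $\mathcal{F}$ with $0 < \rank(\mathcal{F}) < \rank(E)$ satisfies $\mu(\mathcal{F}) < \mu(E)$. First I would record the numerical data: in the odd case $n = 2k+1$ the bundle $S$ has rank $2^{k}$ and $c_{1}(S) = -2^{k-1}$, while in the even case $n = 2k$ each of $S'$, $S''$ has rank $2^{k-1}$ and $c_{1} = -2^{k-2}$; in every case $\mu(S) = -\tfrac{1}{2}$.

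Next I would reduce stability to a vanishing statement via Hoppe's criterion. A saturated rank-$q$ subsheaf $\mathcal{F} \subset S$ with $\det \mathcal{F} = \mathcal{O}_{Q_{n}}(a)$ produces a nonzero section of $(\wedge^{q} S)(-a)$, and it destabilizes exactly when $a/q \geq -\tfrac{1}{2}$. Hence $S$ is stable if and only if
$$H^{0}\bigl(Q_{n}, (\wedge^{q} S)(t)\bigr) = 0 \quad \text{for all } 1 \leq q \leq \rank(S)-1 \text{ and all integers } t \leq \lfloor q/2 \rfloor.$$
Since $\mathcal{O}_{Q_{n}}(1)$ is very ample, $h^{0}$ is nondecreasing under positive twists, so it suffices to treat the extremal twist $t = \lfloor q/2 \rfloor$ for each $q$.

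To establish this vanishing I would use the homogeneous structure: $Q_{n} = \mathrm{Spin}(n+2)/P$ and $S$ is the irreducible homogeneous bundle induced by a spin representation of the Levi factor of $P$. Consequently each exterior power $\wedge^{q} S$ is again a homogeneous bundle and decomposes into a direct sum of irreducible homogeneous bundles, whose global sections are computed by the Borel--Weil--Bott theorem exactly as in Theorem \ref{BWB}. The required $H^{0}$-vanishing then amounts to checking that, after the extremal twist $t = \lfloor q/2 \rfloor$, no irreducible summand of $\wedge^{q} S$ carries a dominant highest weight, i.e.\ none of them is $\mathcal{O}_{Q_{n}}(b)$ with $b \geq 0$. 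As a conceptual shortcut, one may instead invoke the theorem of Ramanan that an irreducible homogeneous bundle on a rational homogeneous space $G/P$ is stable with respect to the ample generator, which applies verbatim to $S$.

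The main obstacle is the explicit cohomological bookkeeping for the exterior powers $\wedge^{q} S$. Unlike $S$ itself these are reducible, and identifying their irreducible constituents (equivalently, the associated highest weights) for all intermediate $q$ is the delicate point. I would handle it by induction on $n$, using the fundamental restriction formulas for spinor bundles under a smooth hyperplane section $Q_{n-1} \subset Q_{n}$ --- namely $S|_{Q_{n-1}} \cong S' \oplus S''$ when $n$ is odd, and $S'|_{Q_{n-1}} \cong S''|_{Q_{n-1}}$ is the spinor bundle of the odd quadric when $n$ is even --- together with the fundamental exact sequences $0 \to S \to \mathcal{O}_{Q_{n}}^{\oplus N} \to S(1) \to 0$ (and their even-dimensional analogues relating $S'$ and $S''$) to propagate the vanishing from $Q_{n-1}$ to $Q_{n}$. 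The base cases $Q_{3}$ and $Q_{4}$, where $S$ has rank $2$ and stability reduces to the single condition $H^{0}(Q_{n}, S) = 0$, anchor the induction.
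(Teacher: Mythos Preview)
The paper does not contain a proof of this statement: it is quoted verbatim from \cite[Theorem~2.1]{Ott1} and used as a black box, so there is no ``paper's own proof'' to compare your proposal against. Your task was therefore mis-posed from the start.

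That said, as a sketch of how one might prove Ottaviani's result, your outline is reasonable but uneven. The cleanest route you mention---invoking Ramanan's theorem that every irreducible homogeneous bundle on $G/P$ is stable---disposes of the matter in one line, and is in fact what several authors cite when they need this fact. Your primary plan via Hoppe's criterion is also sound in principle, but the step you flag as the ``main obstacle'' (decomposing $\wedge^{q}S$ into irreducibles and checking dominance of the twisted weights for every intermediate $q$) is genuinely nontrivial and you do not carry it out; the proposed induction on $n$ via hyperplane restriction would control $H^{0}$ of $S$ itself but does not obviously propagate the required vanishing for all exterior powers $\wedge^{q}S$, since restriction does not commute cleanly with exterior powers of a bundle that splits on the hyperplane section. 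Ottaviani's original argument in \cite{Ott1} proceeds differently: he first establishes the key cohomology vanishings $H^{0}(Q_{n},S)=0$ and related ones directly from the defining exact sequences of the spinor bundles, and then argues stability by a direct slope analysis rather than via the full Hoppe criterion for all exterior powers.
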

		
		A property relevant of the spinor bundles on quadrics $Q_{n}$ is that they are not split and have no intermediate cohomology, i.e., given $S$ a spinor bundle on $Q_{n}$, then
		
		$$H^{i}(Q_{n}, S) = 0, \ \ \mbox{for} \ \ 0<i<n.$$
		
		In 1964, Horrocks stabilizes in \cite{Horro} that vector bundles on $\mathbb{P}^{n}$ without intermediate cohomology split as a direct sum of line bundles. This concept leaves us a more general definition:
		
		\begin{Definition} A vector bundle $E$ on a projective variety $X$ is called arithmetically Cohen Macaulay (aCM) if
			$$H^{i}(X, E) = 0, \ \ \mbox{for} \ \ 0<i<\dim X.$$
			
		\end{Definition}

		\begin{thm} \label{teo 2.7}\cite[Theorem 2.1]{Bu}
			Let $F$ be a vector bundle on $Q_n, n \geq 3.$ If $F$ is arithmetically
			Cohen-Macaulay, i.e., it has no intermediate cohomology, then $F$ is a direct sum
			of line bundles and twisted by spinor bundles.
		\end{thm}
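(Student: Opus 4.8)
The plan is to convert the cohomological hypothesis into the language of maximal Cohen--Macaulay (MCM) modules over the quadric cone and then to exploit the representation theory of the Clifford algebra. Write $R = \C[x_0, \ldots, x_{n+1}]/(q)$ for the homogeneous coordinate ring of $Q_n \subset \p^{n+1}$, with $q$ the nondegenerate quadratic form defining the quadric and $\mm$ the irrelevant maximal ideal, so that $\dim R = n+1$. First I would replace $F$ by its saturated module of sections $M = \bigoplus_{t \in \Z} H^0(Q_n, F(t))$, so that $F = \widetilde M$ and $H^0_{\mm}(M) = H^1_{\mm}(M) = 0$. The standard comparison $H^i_{\mm}(M) \cong \bigoplus_t H^{i-1}(Q_n, F(t))$ for $2 \leq i \leq n$ shows that the vanishing of intermediate cohomology, $H^j(Q_n, F(t)) = 0$ for $1 \leq j \leq n-1$ and all $t$, is precisely the statement that $H^i_{\mm}(M) = 0$ for $i \leq n$, i.e. $\operatorname{depth} M = n+1 = \dim R$. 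Hence $M$ is a graded MCM $R$-module.

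The central step is Eisenbud's matrix factorization theory. Since $R = S/(q)$ is a hypersurface ring over $S = \C[x_0, \ldots, x_{n+1}]$, every graded MCM module splits as $M = M_{\mathrm{free}} \oplus \overline{M}$, where $M_{\mathrm{free}}$ is free and $\overline{M} = \coker(\varphi)$ arises from a reduced matrix factorization $(\varphi, \psi)$ of $q$, meaning $\varphi\psi = \psi\varphi = q\cdot\id$. Because $\deg q = 2$, any reduced such factorization has linear entries. Sheafifying, $M_{\mathrm{free}}$ contributes a direct sum of line bundles $\bigoplus_i \mathcal{O}_{Q_n}(t_i)$, so everything reduces to identifying the sheaves coming from reduced linear factorizations of $q$.

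For that I would use the classical dictionary identifying linear matrix factorizations of $q$ with modules over the even Clifford algebra $\operatorname{Cl}_0(q)$: writing $\varphi = \sum_i x_i e_i$ forces the constant matrices $e_i$ to satisfy the Clifford relations of $q$. Over $\C$, the even Clifford algebra of a nondegenerate form in $n+2$ variables is the full Clifford algebra of a nondegenerate form in $n+1$ variables; by the parity of $n+1$ this is a single matrix algebra when $n$ is odd and a product of two matrix algebras when $n$ is even. Thus $\operatorname{Cl}_0(q)$ is semisimple with exactly one simple module for $n$ odd and two for $n$ even. Every Clifford module is a sum of simples, so every reduced factorization decomposes into copies of the distinguished spinorial factorization(s), whose associated MCM modules sheafify, up to a twist $\mathcal{O}_{Q_n}(t)$, to the spinor bundle $S$ when $n$ is odd and to $S'$, $S''$ when $n$ is even.

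Assembling the two summands yields $F \cong \bigoplus_i \mathcal{O}_{Q_n}(t_i) \oplus \bigoplus_j S(t_j)$, the parity dichotomy matching the number of spinor bundles on $Q_n$. I expect the main obstacle to be the precise dictionary in the third step: one must check that the spinor bundles defined here by pullback of the tautological bundle along $Q_n \hookrightarrow Gr$ really coincide with the sheafifications of the MCM modules attached to the simple Clifford modules, and must track both the rank ($2^k$ for $n=2k+1$, $2^{k-1}$ for $n=2k$) and the single twist that normalizes the factorization, the latter being exactly what produces the clause ``twisted by spinor bundles'' rather than ``spinor bundles''.
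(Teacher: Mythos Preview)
The paper does not prove this statement: it is quoted verbatim from \cite[Theorem 2.1]{Bu} and used as a black box, so there is no ``paper's own proof'' to compare against. Your sketch is the classical Kn\"orrer/Buchweitz--Greuel--Schreyer argument (aCM bundle $\Rightarrow$ graded MCM module over the hypersurface ring $\Rightarrow$ matrix factorization of $q$ $\Rightarrow$ Clifford module $\Rightarrow$ sum of simples $\Rightarrow$ line bundles plus twisted spinor bundles), and it is correct in outline; the only point that genuinely needs care is the one you flag yourself, namely the identification of the sheafified simple Clifford modules with the spinor bundles as defined here via the Grassmannian embedding.
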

		
		Now we have a Horrocks's criterion version for quadrics, but we need to add a further more condition
		
		\begin{thm} \label{criterium} \cite[Theorem 1.5]{Ott4} Let $E$ be a vector bundle on $Q_{n}$. Let $S$ be a spinor bundle. Then,
			
			$$
			\begin{array}{cccc}
				E = \oplus_{i=1}^{r}\mathcal{O}(a_{i}) & \Longleftrightarrow \left\{\begin{array}{llccc}
					
					H^{i}_{\ast}(E) = 0 \ \ \mbox{for} \ \ 0<i<n \\ \\
					
					H_{\ast}^{n-1}(E\otimes S) = 0.

				\end{array}\right.

			\end{array}$$
		\end{thm}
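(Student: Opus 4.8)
The plan is to prove the two implications separately: the forward direction ($\Rightarrow$) is a direct cohomological computation for line and spinor bundles, while the reverse direction ($\Leftarrow$) carries all the weight and combines the structure theorem with a spinor-elimination argument. Throughout I will use two facts recorded above: line bundles on $Q_{n}$ have no intermediate cohomology, and every spinor bundle is aCM, that is $H^{i}(Q_{n}, S(t)) = 0$ for $0 < i < n$ and all $t$.

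For ($\Rightarrow$), suppose $E = \bigoplus_{i=1}^{r} \mathcal{O}(a_{i})$. Since each $\mathcal{O}(a_{i})$ has no intermediate cohomology, we obtain $H^{i}_{\ast}(E) = 0$ for $0 < i < n$ at once. Tensoring by a spinor bundle gives $E \otimes S = \bigoplus_{i} S(a_{i})$, and because spinor bundles are aCM we have in particular $H^{n-1}_{\ast}(S(a_{i})) = 0$ for each $i$ (note $0 < n-1 < n$ for $n \geq 3$), whence $H^{n-1}_{\ast}(E \otimes S) = 0$. This gives both conditions on the right-hand side.

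For ($\Leftarrow$), the first hypothesis $H^{i}_{\ast}(E) = 0$ for $0 < i < n$ says exactly that $E$ is aCM, so by Theorem \ref{teo 2.7} we may write
$$E \;=\; \Big(\bigoplus_{i} \mathcal{O}(a_{i})\Big) \oplus \Big(\bigoplus_{j} S_{j}(b_{j})\Big),$$
a sum of line bundles and twisted spinor bundles, and the goal is to show that no spinor summand occurs. Tensoring by the spinor bundle $S$ of the hypothesis yields $E \otimes S = \bigoplus_{i} S(a_{i}) \oplus \bigoplus_{j} (S_{j} \otimes S)(b_{j})$. The part $\bigoplus_{i} S(a_{i})$ is aCM and contributes nothing to $H^{n-1}_{\ast}$, so the second hypothesis forces
$$\bigoplus_{j} H^{n-1}_{\ast}\big((S_{j} \otimes S)(b_{j})\big) \;=\; H^{n-1}_{\ast}(E \otimes S) \;=\; 0.$$
Thus it suffices to prove the nonvanishing statement: for any two spinor bundles $S_{j}$ and $S$ on $Q_{n}$ one has $H^{n-1}_{\ast}(S_{j} \otimes S) \neq 0$. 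Granting this, the displayed vanishing forces the index set $\{j\}$ to be empty, so $E$ is a direct sum of line bundles.

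The heart of the argument, and the step I expect to be hardest, is precisely this nonvanishing. The approach I would take is to pass to the dual via Serre duality, using $K_{Q_{n}} = \mathcal{O}(-n)$, rewriting $H^{n-1}\big((S_{j} \otimes S)(t)\big)$ as the dual of $H^{1}\big((S_{j}^{\vee} \otimes S^{\vee})(-t-n)\big)$; then to reduce the resulting $H^{1}$ to a computation of global sections by means of the fundamental exact sequence $0 \to S \to \mathcal{O}^{\oplus N} \to S(1) \to 0$ of the spinor bundle, together with the self-duality relation $S^{\vee} \cong S(1)$ in the odd case. The nonvanishing ultimately reflects the fact that a tensor product of two spinor bundles is genuinely \emph{not} aCM, its only intermediate cohomology sitting in degree $n-1$, traceable to the Clifford pairing $S \otimes S \to \mathcal{O}(1)$. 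The even-dimensional case $Q_{2k}$ requires extra care, since there the two spinor bundles $S'$ and $S''$ behave differently under tensoring: $S' \otimes S'$ and $S' \otimes S''$ decompose into different homogeneous summands, so the cohomology must be checked separately for each pairing in order to rule out summands of \emph{both} types. For this I would fall back on the Borel--Weil--Bott description of $Q_{n} = G/P$ to pin down the exact degree in which the intermediate cohomology appears.
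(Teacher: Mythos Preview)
The paper does not prove this statement at all: Theorem~\ref{criterium} is quoted verbatim from Ottaviani \cite[Theorem~1.5]{Ott4} and used as a black box, so there is no ``paper's own proof'' to compare your attempt against. Your proposal is therefore not competing with anything in the paper --- it is an independent attempt to reprove a cited result.

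That said, your overall strategy is the right one and matches how Ottaviani argues: the forward direction is immediate from the aCM property of line and spinor bundles, and for the converse one first invokes the Kn\"orrer--Buchweitz--Greuel--Schreyer classification (Theorem~\ref{teo 2.7} here) to write $E$ as line bundles plus twisted spinors, then eliminates the spinor summands via the nonvanishing of $H^{n-1}_{\ast}(S_j\otimes S)$. The step you flag as the hard one really is the crux, and your Serre-duality reduction to $H^{1}$ together with the short exact sequence $0\to S\to\mathcal{O}^{\oplus N}\to S(1)\to 0$ is exactly the tool used to produce the required nonzero class. One caution: in the even case the theorem as stated in the paper fixes a single spinor bundle $S$, but to kill \emph{both} possible spinor summands $S'(b)$ and $S''(b)$ you need $H^{n-1}_{\ast}(S'\otimes S)\neq 0$ and $H^{n-1}_{\ast}(S''\otimes S)\neq 0$ simultaneously for that one chosen $S$; you note this but do not verify it, and in Ottaviani's original formulation the hypothesis is in fact imposed for \emph{every} spinor bundle $S$, which sidesteps the issue. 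So your sketch is sound modulo that parity subtlety, which is a genuine gap in the statement as reproduced here rather than in your argument.
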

		
		For the scope of this work, we need of a Bott's formula for quadrics, which we find in \cite{S}, which it is showed a vanishing theorem for the cohomology of $\Omega_{Q_{n}}^q(t)$ for quadric hypersurfaces $Q_n$ in $\p^{n+1}$.
		
		\begin{thm}\cite[Bott's formula for Quadric]{S} \label{BottQ}
			Let $X$ be a nonsingular quadric hypersurface of dimension $n.$
			\begin{enumerate}
				\item If $-n+q \leq k \leq q$ and  $k \neq 0$ and $k \neq -n+2q,$ then $H^p(X, \Omega^q(k)) = 0$ for all p;
				\item $H^p(X, \Omega^q) \neq 0$ if and only if $p=q;$
				\item $H^p(X, \Omega^q(-n+2q)) \neq 0$ if and only if $p = n-q;$
				\item If $k > q,$ then $H^p(X, \Omega^q(k)) \neq 0$ if and only if $p=0;$
				\item If $k< -n+q,$ then $H^p(X, \Omega^q(k)) \neq 0$ if and only if $p=n.$
			\end{enumerate}
		\end{thm}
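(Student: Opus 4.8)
The plan is to prove the formula by Bott's classical two-sequence method, reducing everything to the cohomology of line bundles on $Q_n$ and organizing the cases by Serre duality. Write $X = Q_n \subset \mathbb{P}^{n+1}$, a hypersurface of degree two. First I would record the base case $q=0$: from $0 \to \mathcal{O}_{\mathbb{P}^{n+1}}(k-2) \to \mathcal{O}_{\mathbb{P}^{n+1}}(k) \to \mathcal{O}_{Q_n}(k) \to 0$ together with Bott vanishing on $\mathbb{P}^{n+1}$ one gets $H^0(Q_n,\mathcal{O}(k)) \neq 0 \iff k \geq 0$, then $H^n(Q_n,\mathcal{O}(k)) \neq 0 \iff k \leq -n$ by Serre duality with $\omega_{Q_n} = \mathcal{O}(-n)$, and $H^p(Q_n,\mathcal{O}(k)) = 0$ for $0 < p < n$ and all $k$, since $Q_n$ is aCM. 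This is exactly items (2)--(5) read off in the degenerate case $q=0$.

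Next I would bring in the two fundamental exact sequences. Restricting the Euler sequence to $X$ gives
\[
0 \to \Omega^1_{\mathbb{P}^{n+1}}|_{Q_n} \to \mathcal{O}_{Q_n}(-1)^{\oplus(n+2)} \to \mathcal{O}_{Q_n} \to 0,
\]
whose $q$-th exterior power yields the short exact sequences
\[
0 \to \Omega^q_{\mathbb{P}^{n+1}}|_{Q_n} \to \mathcal{O}_{Q_n}(-q)^{\oplus\binom{n+2}{q}} \to \Omega^{q-1}_{\mathbb{P}^{n+1}}|_{Q_n} \to 0 .
\]
Splicing these (equivalently, running the hypercohomology of the restricted Koszul complex whose middle terms are sums of line bundles) computes $H^p(Q_n,\Omega^q_{\mathbb{P}^{n+1}}|_{Q_n}(k))$ from the $q=0$ case. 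Then the conormal sequence, wedged, gives
\[
0 \to \Omega^{q-1}_{Q_n}(k-2) \to \Omega^q_{\mathbb{P}^{n+1}}|_{Q_n}(k) \to \Omega^q_{Q_n}(k) \to 0 ,
\]
and feeding this into the long exact cohomology sequence lets me induct on $q$ to reach $H^p(Q_n,\Omega^q_{Q_n}(k))$. Throughout I would use Serre duality $H^p(Q_n,\Omega^q(k))^{\ast} \cong H^{n-p}(Q_n,\Omega^{n-q}(-k))$, coming from $\wedge^q T_{Q_n} \cong \Omega^{n-q}_{Q_n}\otimes\omega_{Q_n}^{-1}$ and $\omega_{Q_n}=\mathcal{O}(-n)$. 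Under the symmetry $(p,q,k)\mapsto(n-p,n-q,-k)$ the vanishing range (1) is preserved, items (2) and (3) are each self-dual, and items (4) and (5) are interchanged, so it suffices to settle (1), (2), and (4).

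The hard part will be the two resonant twists $k=0$ and $k=-n+2q$, i.e.\ the nonvanishing items (2) and (3). Away from these values the connecting maps in the long exact sequence attached to the conormal wedge are forced to vanish on one side (the adjacent line-bundle groups being zero by the $q=0$ analysis), so the cohomology of $\Omega^q_{Q_n}(k)$ is read off directly and one obtains either the clean vanishing of (1) or the single surviving group of (4)/(5). At the resonant twists, however, both neighboring terms contribute, and the whole content of (2) and (3) is that the connecting homomorphism is an isomorphism, leaving exactly one one-dimensional group, in degree $p=q$ for $k=0$ and $p=n-q$ for $k=-n+2q$. Pinning this down is where the real care lies: I would track the spliced connecting maps explicitly, use the induction hypothesis to isolate which single entry of the spectral sequence can be nonzero, and confirm one-dimensionality either by the Euler-characteristic count or by checking the base case $q=1$ (where $\Omega^1_{Q_n}(k)$ is controlled by the conormal sequence alone). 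The self-duality of (2) and (3) is a useful consistency check that the predicted degree is the correct one.
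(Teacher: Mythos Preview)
The paper does not prove this statement at all: Theorem~\ref{BottQ} is quoted verbatim from Snow~\cite{S} and used as a black box, so there is no ``paper's own proof'' to compare your proposal against.

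That said, your outline is a correct and well-known route, essentially the hypersurface method (restricted Euler sequence plus the wedged conormal sequence, induction on $q$, organized by Serre duality); this is close in spirit to what underlies Flenner's result (Theorem~\ref{flenner}) for general weighted complete intersections. Snow's original argument in~\cite{S} is genuinely different: he treats $Q_n$ as the homogeneous space $SO(n+2,\mathbb{C})/P$ and computes $H^p(Q_n,\Omega^q(k))$ via the Borel--Weil--Bott/Kostant machinery, identifying $\Omega^q(k)$ with a homogeneous bundle and reading off the answer from the Weyl group combinatorics. Your method is more elementary and requires no representation theory, but the price is exactly the point you flag: at the two resonant twists $k=0$ and $k=-n+2q$ you must argue separately that the connecting map is an isomorphism, whereas in Snow's approach those cases fall out uniformly from the dot-action on weights. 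Either way the result is the same; just be aware that the induction at the resonant twists needs the one-dimensionality of the adjacent groups (from the induction hypothesis) together with an Euler-characteristic or explicit base-case check, as you indicate.
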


		\medskip

		As an alternative ways to calculate the dimension of the cohomology group of differential forms we have a result due to Flenner, in \cite{flenner81}.
		
		\begin{thm} \label{flenner} \cite[Satz 8.11]{flenner81} Let $X$ be a weighted complete intersection. Then,
			\begin{enumerate}
				\item[-] $h^q(X,\Omega_{X}^q) =1 $ for $0\le q\le n$, $q\neq \frac{n}{2}$.
				\item[-] $h^p\big(X,\Omega_{X}^q(t)\big) = 0$ in the following cases
				\begin{itemize}
					\item[-] $0<p<n$, $p+q\neq n$ and either $p\neq q$ or $t\neq 0$;
					\item[-] $p+q > n$ and $t>q-p$;
					\item[-] $p+q < n$ and $t<q-p$.
				\end{itemize}
			\end{enumerate}
		\end{thm}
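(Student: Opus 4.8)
The statement is a Bott-type vanishing (and Hodge-number) theorem for $X$, so the plan is to reduce everything to the corresponding statements on the ambient weighted projective space $\mathbb{P}=\mathbb{P}(w_0,\ldots,w_N)$ and then transport them to $X$ through the two exact sequences that encode a complete intersection: the Koszul resolution of $\mathcal{O}_X$ and the conormal sequence. Write $X=V(f_1,\ldots,f_c)\subset\mathbb{P}$ for the multidegree $(d_1,\ldots,d_c)$, with $n=N-c$. Since a quasi-smooth well-formed $X$ avoids the singular locus of $\mathbb{P}$, near $X$ the sheaves of Zariski differentials behave as honest forms, and this lets me use the weighted Bott formula for $H^p(\mathbb{P},\Omega_{\mathbb{P}}^q(t))$ as the base case. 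The whole argument is then organized by induction on the codimension $c$.

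First I would compute $H^p(X,\Omega_{\mathbb{P}}^q|_X(t))$ by restricting from the ambient space. The Koszul complex resolving $\mathcal{O}_X$, twisted by $\Omega_{\mathbb{P}}^q(t)$, reads
$$0 \to \Omega_{\mathbb{P}}^q\Big(t-\sum_i d_i\Big) \to \cdots \to \bigoplus_i \Omega_{\mathbb{P}}^q(t-d_i) \to \Omega_{\mathbb{P}}^q(t) \to \Omega_{\mathbb{P}}^q|_X(t) \to 0.$$
Breaking this into short exact sequences and feeding in the ambient Bott vanishing term by term, the vanishing ranges on $\mathbb{P}$ propagate to give the cohomology of $\Omega_{\mathbb{P}}^q|_X(t)$; the numerical hypotheses are exactly what is needed to keep all the intermediate kernel and cokernel cohomology under control.

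Next I would pass from $\Omega_{\mathbb{P}}^q|_X$ to the intrinsic $\Omega_X^q$ using the conormal sequence $0\to N^{\vee}\to\Omega_{\mathbb{P}}^1|_X\to\Omega_X^1\to 0$, where $N^{\vee}=\bigoplus_{i=1}^c\mathcal{O}_X(-d_i)$. Taking $q$-th exterior powers produces a filtration of $\wedge^q(\Omega_{\mathbb{P}}^1|_X)$ whose graded pieces are $\Omega_X^{q-j}\otimes\wedge^j N^{\vee}$, and the associated short exact sequences let me extract $H^p(X,\Omega_X^q(t))$ from the groups computed in the previous step. Matching the twists against the inequalities $p+q\neq n$, $t>q-p$, or $t<q-p$ yields the asserted vanishing. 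For the first assertion, $h^q(X,\Omega_X^q)=1$ when $q\neq n/2$, I would instead invoke the weak Lefschetz theorem to identify $H^q(X,\Omega_X^q)\cong H^q(\mathbb{P},\Omega_{\mathbb{P}}^q)=\mathbb{C}$ for $q<n/2$ and use Serre duality on $X$ for $q>n/2$; the middle degree $q=n/2$ is excluded precisely because the primitive Hodge classes contribute an extra summand there.

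The main obstacle will be the combinatorial bookkeeping in this two-step propagation: one must track the twists simultaneously through the Koszul filtration, which shifts by the $d_i$, and through the conormal filtration, which shifts by the $d_i$ and also lowers the form-degree, and then verify that at every stage the ambient Bott vanishing applies in the correct range. The weighted setting adds a further layer of care, since each $\Omega_{\mathbb{P}}^q$ must be interpreted as a reflexive sheaf and one has to confirm that restriction to the smooth locus containing $X$ does not disturb the cohomology computations.
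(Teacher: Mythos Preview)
The paper does not prove this statement at all: Theorem~\ref{flenner} is quoted verbatim from Flenner's paper \cite[Satz~8.11]{flenner81} and is used only as a black-box input elsewhere in the paper, so there is no proof here to compare your proposal against.

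That said, your outline is the standard route to such results and is essentially how Flenner's original argument runs: start from the (weighted) Bott formula on the ambient $\mathbb{P}$, push the vanishing to $\Omega_{\mathbb{P}}^q|_X(t)$ via the Koszul resolution of $\mathcal{O}_X$, and then descend to $\Omega_X^q(t)$ through the filtration coming from the conormal exact sequence, with the Hodge-number assertion handled by weak Lefschetz and Serre duality. Your identification of the main difficulty---the simultaneous twist-tracking through both filtrations and the care needed with reflexive differentials in the weighted setting---is accurate. So the proposal is sound as a sketch; it simply has no counterpart in this paper because the authors defer entirely to \cite{flenner81}.
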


		Now, using the above results and notations we show some characterizations of the tangent and cotangent sheaves of distribution on quadrics.
		

		
		
		

		\begin{thm}
			Let $\sF$ be a distribution of codimension one on $Q_{n}$. If $T_{\sF}$ is a direct sum of line bundles and spinor bundle twisted by some $\mathcal{O}_{Q_{n}}(t)$, then $Z$ is arithmetically Buchsbaum, with $h^1(Q_{n}, \mathcal{I}_{Z} (r-2)) = 1$ being the only nonzero intermediate cohomology for $H^i(Q_{n},\mathcal{I}_{Z}).$

		\end{thm}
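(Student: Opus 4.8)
The plan is to exploit the Eagon--Northcott complex for the codimension-one distribution, which here reduces to the short exact sequence \eqref{eq:Dist cod1} defining $\sF$. Since $T_{\sF}$ is assumed locally free (it is a direct sum of line bundles and a twisted spinor bundle), by Lemma \ref{local-free-tg-sheaf} the singular locus $Z$ has pure codimension $2$, so $N_{\sF} = \mathcal{I}_{Z/Q_n}(r)$ and we have
\begin{equation*}
0 \to T_{\sF} \to TQ_n \to \mathcal{I}_{Z}(r) \to 0.
\end{equation*}
First I would twist this sequence by $\mathcal{O}_{Q_n}(t)$ and $\mathcal{O}_{Q_n}(-r)$ and pass to the long exact cohomology sequence, so that the intermediate cohomology $H^i_{\ast}(Q_n, \mathcal{I}_{Z})$ is controlled by that of $T_{\sF}(-r+t)$ and of $TQ_n(-r+t)$.

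The key computational input is twofold. On one side, the cohomology of $TQ_n(t) = \Omega^{n-1}_{Q_n}(t+n)$ (using $TQ_n \cong \Omega^{n-1}_{Q_n} \otimes \mathcal{O}(n)$ up to the twist coming from $\det TQ_n = \mathcal{O}(n)$) can be read off completely from the Bott formula for quadrics, Theorem \ref{BottQ}. In particular I expect the only surviving intermediate cohomology of $TQ_n$ twisted to come from the exceptional values $k = 0$ and $k = -n + 2q$ excluded in item (1), which will feed precisely one nonzero group into $H^1_{\ast}(\mathcal{I}_Z)$. On the other side, since $T_{\sF} = \bigoplus_i \mathcal{O}_{Q_n}(t_i) \oplus S(t)$, its intermediate cohomology is governed by that of the line bundles (which vanishes for $0 < i < n$, as line bundles on $Q_n$ are aCM) and that of the spinor summand $S(t)$, which by the stated property of spinor bundles satisfies $H^i_{\ast}(Q_n, S) = 0$ for $0 < i < n$. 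Thus $T_{\sF}$ itself has no intermediate cohomology.

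The plan then is to chase the long exact sequence degree by degree: because both $T_{\sF}$ and the relevant range of $TQ_n$ have vanishing intermediate cohomology except at the single Bott-exceptional twist, one obtains $H^i_{\ast}(Q_n, \mathcal{I}_Z) = 0$ for $2 \le i \le n-1$ outright, and for $i = 1$ one gets a single nonzero contribution. I would identify that contribution explicitly: the connecting map places $h^1(Q_n, \mathcal{I}_Z(t))$ nonzero exactly when the twist hits the value forcing $H^2(Q_n, T_{\sF}(-r+t)) $ or the relevant $H^1(TQ_n(-r+t))$ to jump, and a direct bookkeeping with the exceptional index $k = -n + 2q = r-2$ pins this down to $h^1(Q_n, \mathcal{I}_Z(r-2)) = 1$. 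Finally, the arithmetically Buchsbaum conclusion follows because a single one-dimensional intermediate cohomology module, concentrated in one degree, is automatically annihilated by the irrelevant maximal ideal, which is exactly the Buchsbaum condition (as recalled in the subsection on aCM and aB schemes).

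The main obstacle I anticipate is the precise determination of the surviving $H^1$: I must confirm that the contribution genuinely comes from $TQ_n$ (equivalently $\Omega^{n-1}_{Q_n}$ at the exceptional Bott twist) rather than being cancelled by the map from $H^1(TQ_n(-r+t))$, and that its dimension is exactly one and lives in exactly one twist $t = r-2$. This requires careful use of both item (1) (the vanishing range) and items (2)--(3) (the two exceptional nonzero groups $H^q(\Omega^q)$ and $H^{n-q}(\Omega^q(-n+2q))$) of Theorem \ref{BottQ}, together with the exactness at the relevant spot to rule out cancellation; the spinor twist $S(t)$ must be checked not to interfere, which it does not precisely because its intermediate cohomology vanishes.
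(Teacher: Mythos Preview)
Your approach is essentially the same as the paper's: both arguments twist the defining sequence $0 \to T_{\sF} \to TQ_n \to \mathcal{I}_Z(r) \to 0$ by $\mathcal{O}_{Q_n}(t)$, use that $T_{\sF}$ has no intermediate cohomology (line bundles and spinor bundles are aCM on $Q_n$), and then read off $H^i_{\ast}(\mathcal{I}_Z)$ from the cohomology of $TQ_n(t) \cong \Omega^{n-1}_{Q_n}(t+n)$ via Theorems \ref{BottQ} and \ref{flenner}, isolating the single surviving group at $t=-2$. Your bookkeeping of the exceptional twist is slightly garbled (the Bott value is $-n+2q = n-2$ for $q=n-1$, giving $t=-2$ on $TQ_n$ and hence $r-2$ on $\mathcal{I}_Z$, not ``$-n+2q = r-2$''), but the argument is otherwise correct and matches the paper.
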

		
		\begin{dem} Consider the sequence (\ref{eq:Dist cod1}) for $Q_{n}.$ Twisting it by $\mathcal{O}_{Q_{n}}(t),$ and passing to cohomology, we get:
			\begin{eqnarray} \label{cohom}
				\cdots \to  H^1(Q_{n},T_{\sF}(t)) & \to H^1(Q_{n}, T Q_{n}(t)) \to &H^1(Q_{n}, \mathcal{I}_{Z} (r+t)) \to \\ \nonumber
				\to H^2(Q_{n},T_{\sF}(t)) & \to H^2(Q_{n}, T Q_{n}(t)) \to & H^2(Q_{n}, \mathcal{I}_{Z} (r+t))\to \cdots \\ \nonumber
				\vdots & \vdots &  \;\;\; \vdots  \\ \nonumber
				\cdots \to H^{n-2}(Q_{n},T_{\sF}(t)) & \to H^{n-2}(Q_{n}, TQ_{n}(t)) \to  &H^{n-2}(Q_{n}, \mathcal{I}_{Z} (r+t))\to \\ 
				\nonumber
				\to H^{n-1}(Q_{n},T_{\sF}(t)) &\to H^{n-1}(Q_{n}, TQ_{n}(t))  \to & H^{n-1}(Q_{n}, \mathcal{I}_{Z} (r+t))\to  \cdots
			\end{eqnarray}

			Since $T_{\sF}$ is a direct sum of line bundles and spinor bundle, it has no intermediate cohomology, see Theorem \ref{teo 2.7}, i.e.,
			$$H^i_{\ast}(Q_{n},T_{\sF}) = 0,  \  \;  \; 1 \leq i \leq n-1,$$
			and it follows that 
			$$H^i(Q_{n}, TQ_{n}(t)) \simeq H^i(Q_{n}, \mathcal{I}_{Z} (r+t)), \;\; 1 \leq i \leq n-2.$$
			
			
			By Theorem \ref{flenner}, $H^2(Q_{n}, TQ_{n}(t)) = \cdots = H^{n-2}(Q_{n}, TQ_{n}(t)) =  0.$ From Bott's formula for quadrics (Theorem \ref{BottQ}), we have that $H^1(Q_{n}, TQ_{n}(t)) = 0$ for all $t \neq -2,$ i.e. $H^1(Q_{n}, TQ_{n}(-2)) \neq 0.$ Hence, $H^1(Q_{n}, \mathcal{I}_{Z} (r-2)) \neq 0$ and we conclude that $Z$ is arithmetically Buchsbaum as desired.  
			
		\end{dem}
		
		We observe that \cite[Theorem 3.2]{AMS} corresponds to the case $n=3$ for the above Theorem.
		
		\medskip
		
	\begin{Lemma} \label{lemma aux}
		Let $\sF$ be a codimension one holomorphic distribution on $Q_{n}$, n>3, with singular set denoted by $Z = \Sing(\sF)$. If $Z$ is arithmetically Buchsbaum with $h^{1}(Q_{n}, \mathcal{I}_{Z}(r-2)) =1$ being the only nonzero intermediate cohomology for
		$H^{i}(Q_{n}, \mathcal{I}_{Z})$ and $h^n(T_{\sF}(-r-n)) = 0,$ então $T_{\sF}$ is $(-r)$-regular in the sense Castelnuovo-Mumford.
	\end{Lemma}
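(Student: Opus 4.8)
The goal is to check the definition of regularity head-on: I must prove $H^{i}(Q_{n},T_{\sF}(-r-i))=0$ for every $i>0$, that is for $1\le i\le n$. The engine is the defining sequence (\ref{eq:Dist cod1}) twisted by $\mathcal{O}_{Q_{n}}(-r-i)$,
$$0\to T_{\sF}(-r-i)\to TQ_{n}(-r-i)\to \mathcal{I}_{Z}(-i)\to 0,$$
whose long exact sequence contains
$$H^{i-1}(Q_{n},\mathcal{I}_{Z}(-i))\to H^{i}(Q_{n},T_{\sF}(-r-i))\to H^{i}(Q_{n},TQ_{n}(-r-i)),$$
so it is enough to annihilate the two flanking groups for each $i$. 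A preliminary remark that drives everything is the bound $r\ge 2$: the distribution is given by a nonzero form $\omega\in H^{0}(Q_{n},\Omega^{1}_{Q_{n}}(r))$, and Theorem \ref{BottQ} (case (4), with $q=1$) shows this group vanishes unless $r>1$.

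I would first dispose of the tangent-bundle term. Using $K_{Q_{n}}=\mathcal{O}_{Q_{n}}(-n)$ one has $TQ_{n}\cong\Omega^{n-1}_{Q_{n}}(n)$, hence $TQ_{n}(-r-i)\cong\Omega^{n-1}_{Q_{n}}(n-r-i)$, and Theorem \ref{BottQ} with $q=n-1$ locates its cohomology: the groups $H^{p}(Q_{n},\Omega^{n-1}_{Q_{n}}(k))$ are nonzero only for $p=0$ ($k\ge n$), $p=1$ ($k=n-2$), $p=n-1$ ($k=0$) and $p=n$ ($k\le -2$). Substituting $k=n-r-i$: for $2\le i\le n-2$ the degree $i$ never belongs to $\{0,1,n-1,n\}$ (here $n>3$ is used), so the group vanishes for free; at $i=1$ nonvanishing would force $k=n-2$, i.e. $r=1$, and at $i=n-1$ it would force $k=0$, i.e. again $r=1$ --- both excluded by $r\ge 2$. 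Thus $H^{i}(Q_{n},TQ_{n}(-r-i))=0$ for all $1\le i\le n-1$. At the top, however, $k=-r\le -2$ gives $H^{n}(Q_{n},TQ_{n}(-r-n))\neq 0$, so the sandwich cannot reach degree $n$.

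Next I would treat the ideal-sheaf term through the Buchsbaum hypothesis, which asserts that the only nonzero intermediate cohomology of $\mathcal{I}_{Z}$, namely in degrees $1\le p\le \dim Z=n-2$, is the one-dimensional class $H^{1}(Q_{n},\mathcal{I}_{Z}(r-2))$. For $i=1$ the term $H^{0}(Q_{n},\mathcal{I}_{Z}(-1))$ sits inside $H^{0}(Q_{n},\mathcal{O}_{Q_{n}}(-1))=0$. For $2\le i\le n-1$ the term $H^{i-1}(Q_{n},\mathcal{I}_{Z}(-i))$ lies in the intermediate range and therefore vanishes unless it is the exceptional class; but matching it requires $i-1=1$ and $-i=r-2$, i.e. $i=2$ and $r=0$, which $r\ge 2$ forbids, and the boundary degree $p=n-2$ could equal $1$ only if $n=3$, which $n>3$ forbids. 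Combining the two paragraphs gives $H^{i}(Q_{n},T_{\sF}(-r-i))=0$ for $1\le i\le n-1$, while the remaining case $i=n$ is exactly the hypothesis $h^{n}(Q_{n},T_{\sF}(-r-n))=0$; hence $T_{\sF}$ is $(-r)$-regular.

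The point I would watch most carefully is the numerology rather than any single hard estimate: the argument closes only because $r\ge 2$ simultaneously pushes the degree-$1$ and degree-$(n-1)$ tangent obstructions off the table and keeps the single Buchsbaum class $H^{1}(\mathcal{I}_{Z}(r-2))$ away from every twist $-i$ that appears, while $n>3$ keeps the top intermediate degree $n-2$ strictly above $1$. The genuinely inaccessible group is $H^{n}(Q_{n},T_{\sF}(-r-n))$, which the sandwich cannot control because its tangent-bundle neighbour is nonzero; this is the structural reason that its vanishing has to be imposed as a hypothesis, and it is the step whose necessity I would be sure to highlight.
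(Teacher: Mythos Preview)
Your argument is correct and follows the same route as the paper's proof: twist the defining sequence (\ref{eq:Dist cod1}) by $\mathcal{O}_{Q_n}(-r-i)$, then kill the two flanking cohomology groups $H^{i-1}(\mathcal{I}_Z(-i))$ and $H^{i}(TQ_n(-r-i))$ using the aB hypothesis and the Bott-type vanishing results, leaving the case $i=n$ to the hypothesis. Your write-up is in fact more careful than the paper's: you make explicit the bound $r\ge 2$ coming from $H^{0}(\Omega^{1}_{Q_n}(r))\neq 0$, and you trace precisely where it is needed --- namely to prevent the nonvanishing of $H^{1}(\Omega^{n-1}_{Q_n}(n-2))$ and $H^{n-1}(\Omega^{n-1}_{Q_n})$ at $i=1,\,n-1$, and to keep the exceptional Buchsbaum class $H^{1}(\mathcal{I}_Z(r-2))$ away from the twists $-i$ --- whereas the paper simply invokes Theorems \ref{BottQ} and \ref{flenner} and the aB condition without spelling out these checks.
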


\begin{dem}  Twisting the sequence (\ref{eq:Dist cod1}) by $\cO_{Q_n}(-r-i)$ and taking cohomology, we get
$$ 	\cdots \to  H^{i-1}(Q_{n}, \mathcal{I}_{Z}(-i)) \to  H^{i}(Q_{n},T_{\sF}(-r-i))  \to H^i(Q_{n}, TQ_{n}(-r-i))\to \cdots$$ 
The term on the left vanishes for $i\geq 2,$ since $Z$ is aB and when $i=1$ this term vanishes using the sequence (\ref{ideal sequence}) and Bott's formula.
On the other hand, applying the Theorems \ref{BottQ} and \ref{flenner}, the term on the right vanishes for all $1 \leq i \leq n-1.$ As by hypothesis $h^n(T_{\sF}(-r-n)) = 0,$ we conclude that $T_{\sF}$ is $(-r)$-regular.
	
\end{dem}
		
		\begin{thm}\label{thm3.7} Let $\sF$ be a codimension one holomorphic distribution on $Q_{n}$, n>3, with singular set denoted by $Z = \Sing(\sF)$ such that the tangent sheaf $T_{\sF}$ is locally free. If $Z$ is arithmetically Buchsbaum with $h^{1}(Q_{n}, \mathcal{I}_{Z}(r-2)) =1$ being the only nonzero intermediate cohomology for
			$H^{i}(Q_{n}, \mathcal{I}_{Z})$ and $h^{2}(Q_{n}, T_{\sF}(-2))= h^n(T_{\sF}(-r-n)) =  h^{n-1}(Q_{n}, T_{\sF}(-n)) = 0$, then $T_{\sF}$ has no intermediate cohomology, i.e., $T_{\sF} = \oplus \mathcal{O}_{Q_{n}}(t_{i})\oplus S(t)$, where $S$ is a spinor bundle on $Q_{n}.$
			
		\end{thm}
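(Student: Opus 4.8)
The plan is to show that $T_{\sF}$ has no intermediate cohomology, i.e.\ that $H^i_{\ast}(Q_n, T_{\sF}) = 0$ for $1 \leq i \leq n-1$, and then invoke Theorem \ref{teo 2.7} to conclude that $T_{\sF}$ decomposes as a sum of line bundles and twisted spinor bundles. The main tool is the long exact cohomology sequence (\ref{cohom}) obtained from the defining sequence (\ref{eq:Dist cod1}) twisted by $\mathcal{O}_{Q_n}(t)$, which relates $H^i(Q_n, T_{\sF}(t))$ to $H^i(Q_n, TQ_n(t))$ and $H^i(Q_n, \mathcal{I}_Z(r+t))$. The strategy is to control the middle term via Bott's formula (Theorem \ref{BottQ}) and Flenner's vanishing (Theorem \ref{flenner}), and to control the right-hand ideal-sheaf term using the arithmetically Buchsbaum hypothesis on $Z$, so that the intermediate cohomology of $T_{\sF}$ is squeezed between two vanishing (or explicitly understood) groups.

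First I would handle the ``generic'' intermediate degrees $2 \leq i \leq n-2$. For these, Flenner's theorem gives $H^i(Q_n, TQ_n(t)) = 0$ for all $t$ in the relevant range, and the aB hypothesis together with the fact that $h^1(Q_n,\mathcal{I}_Z(r-2))=1$ is the \emph{only} nonzero intermediate cohomology forces $H^{i}_{\ast}(Q_n, \mathcal{I}_Z(r+t)) = 0$ and $H^{i-1}_{\ast}(Q_n,\mathcal{I}_Z(r+t))=0$ for $i \geq 2$. The long exact sequence then sandwiches $H^i(Q_n, T_{\sF}(t))$ between these vanishing terms, giving $H^i_{\ast}(Q_n,T_{\sF})=0$ for $2 \leq i \leq n-2$. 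The delicate cases are the boundary indices $i=1$ and $i=n-1$, where the middle cohomology $H^i(Q_n, TQ_n(t))$ does not vanish for every $t$: by Bott's formula $H^1(Q_n, TQ_n(-2)) \neq 0$ and dually there is a nonvanishing contribution near $i=n-1$. The point of the single nonzero intermediate cohomology $h^1(Q_n,\mathcal{I}_Z(r-2))=1$ of the ideal sheaf is precisely that it \emph{matches} this nonvanishing tangent contribution of $Q_n$, so that after the connecting maps the induced class on $T_{\sF}$ cancels rather than survives.

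For $i=1$, I would use the explicit extra hypotheses $h^2(Q_n, T_{\sF}(-2))=0$ together with the normalized Castelnuovo--Mumford regularity supplied by Lemma \ref{lemma aux}: under the stated hypotheses (including $h^n(T_{\sF}(-r-n))=0$) the sheaf $T_{\sF}$ is $(-r)$-regular, hence $(-r')$-regular for all $r' \leq r$ by Theorem \ref{CM-regularity}(1), which pins down the vanishing of $H^1(Q_n, T_{\sF}(t))$ for the problematic twist $t=-2$ after comparing with the single surviving class $h^1(Q_n,\mathcal{I}_Z(r-2))=1$ and the isomorphism $H^1(Q_n, TQ_n(-2))\simeq H^1(Q_n,\mathcal{I}_Z(r-2))$ coming from $H^1(Q_n,T_{\sF}(-2))\to H^1(Q_n,TQ_n(-2))\to H^1(Q_n,\mathcal{I}_Z(r-2))\to H^2(Q_n,T_{\sF}(-2))=0$; a dimension count forces $H^1_{\ast}(Q_n,T_{\sF})=0$. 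For $i=n-1$, I would pass to the dual / apply Serre duality, using the symmetric hypothesis $h^{n-1}(Q_n, T_{\sF}(-n))=0$ to kill the top boundary term in the same manner, again with Flenner and Bott controlling the tangent bundle of $Q_n$.

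The main obstacle I anticipate is the careful bookkeeping at the two boundary indices $i=1$ and $i=n-1$: one must verify that the \emph{only} way the single nonzero class $h^1(Q_n,\mathcal{I}_Z(r-2))=1$ interacts with the cohomology of $T_{\sF}$ is through the connecting homomorphism against $H^1(Q_n, TQ_n(-2))$, and that the extra regularity and vanishing hypotheses are exactly enough to force the corresponding $H^1$ and $H^{n-1}$ of $T_{\sF}$ to vanish rather than leaving a residual one-dimensional contribution. Once all of $H^i_{\ast}(Q_n,T_{\sF})=0$ for $1\leq i \leq n-1$ is established, the conclusion $T_{\sF}=\oplus \mathcal{O}_{Q_n}(t_i)\oplus S(t)$ follows immediately from Theorem \ref{teo 2.7}.
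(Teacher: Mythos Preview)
Your proposal is correct and follows essentially the same route as the paper: the long exact sequence coming from (\ref{eq:Dist cod1}), Bott/Flenner vanishing for $TQ_n$, the aB hypothesis on $\mathcal{I}_Z$, Lemma \ref{lemma aux} for $(-r)$-regularity, the matching of the one-dimensional classes $H^1(TQ_n(-2))\simeq H^1(\mathcal{I}_Z(r-2))$ at $t=-2$, and finally Theorem \ref{teo 2.7}. One small imprecision: your ``generic'' sandwich for $2\le i\le n-2$ fails at $(i,t)=(2,-2)$, since $H^{1}(\mathcal{I}_Z(r-2))\neq 0$ there; the paper (and you, implicitly) covers this by invoking the hypothesis $h^{2}(T_\sF(-2))=0$ directly, and for $i=n-1$ the paper argues directly from $H^{n-2}_\ast(\mathcal{I}_Z)=0$ and $H^{n-1}(TQ_n(t))=0$ for $t\neq -n$ rather than via Serre duality.
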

		
		\begin{dem}  Consider the sequence (\ref{eq:Dist cod1}) for all $t \neq -2$ and the piece of the long exact sequence of cohomology
			$$\begin{array}{llllll}
				\cdots \hspace{-1.5mm}&\to H^{i-1}(Q_{n}, T_{\sF}(t))  \to  H^{i-1}(Q_{n}, TQ_{n}(t)) \to H^{i-1}(Q_{n},\mathcal{I}_{Z}(r+t)) \to  \\ \\
				&\to   H^{i}(Q_{n}, T_{\sF}(t))  \to  H^{i}(Q_{n}, TQ_{n}(t)) \to H^{i}(Q_{n},\mathcal{I}_{Z}(r+t)) \to \cdots
			\end{array}$$
			
			\noindent for $i=1, \ldots, n-1.$

			To vanish the intermediate cohomology group of $T_{\sF}$ we vanish the adjacent groups. For this goal and using the isomorphism $TQ_{n}(t) \simeq \Omega_{Q_{n}}^{n-1}(t+n)$ one has

			\begin{equation}\label{eq3.6}
				H^{i}(Q_{n},TQ_{n}(t)) = H^{i}(Q_{n}, \Omega_{Q_{n}}^{n-1}(t+n)) = \left\{\begin{array}{llll} 0, & i =1; & t \neq -2 \\ \\
					0, & i= 2, \ldots , n-2 \\ \\
					
					0, & i =n-1, & t\neq -n.
					
				\end{array}\right.
			\end{equation}
			
			We have that 
			\begin{center} $H^{0}(Q_{n},\mathcal{I}_{Z}(r+t)) = 0$ for $t < -r,$  since $ H^{0}(Q_{n}, \mathcal{O}_{Q_{n}}(r+t)) =0,$  \end{center} by Theorem \ref{flenner}. 
			Considering this vanishing together with the vanishing corresponding to the first line of (\ref{eq3.6}), we obtain that $H^1(T_{\sF}(t))=0$ for all $t \neq -2$ and $t<-r.$ By the Lemma \ref{lemma aux} and Theorem \ref{CM-regularity}, $H^1(T_{\sF}(t))=0$ for all $t \neq -2$ and $t \ge -r.$
			Note that, as $Z$ is aB,
			\begin{equation} \label{eqSD}
H^i(\mathcal{I}_{Z}(r+t)) =0, \; \mbox{for} \; i=1, \ldots, n-2.
			\end{equation}
			Considering the equality (\ref{eqSD}) together with the vanishing corresponding to the second line of (\ref{eq3.6}), we obtain that $H^i(Q_{n},T_{\sF}(t))=0$ for all $t \neq -2$ and $i=2, \ldots, n-2.$
			Considering the equality (\ref{eqSD}) together with the vanishing corresponding to the third line of (\ref{eq3.6}) and the hypothesis $H^{n-1}(Q_{n},T_{\sF} (-n))=0,$ we conclude that $H^{n-1}(Q_{n},T_{\sF} (t))=0,$ for all $t \neq -2.$ 
			
			Now, consider the above long exact sequence of cohomology with $t=-2.$ Applying Serre duality and Bott formula, respectively, we get $H^0(Q_n, \mathcal{I}_{Z}(r-2)) = H^n (Q_{n}, \mathcal{O}_{Q_{n}}(-r+2-n)) = 0,$ for all $r \neq 2.$
			Moreover, by Theorem \ref{flenner}, $H^{i}(TQ_n(-2)) = 0$, for $i=2, \ldots, n-1$ and by apllying Serre duality, $H^i(\mathcal{I}_{Z}(r-2)) = H^{n-i} (\mathcal{O}_{Q_{n}}(-r+2-n)) =0, \; \mbox{for} \; i=2, \ldots, n-2.$ So, we have that $H^i(T_{\sF}(-2))=0$ for $i=3, \ldots, n-1.$ By hypothesis, we get $h^2(T_{\sF} (-2)) = 0$ and
			$h^1(Q_n, \mathcal{I}_{Z}(r-2)) = 1.$ Thus, from the exact sequence,
			\begin{equation}\label{eq3.5}
				0  \to H^{1}(Q_{n}, T_{\sF}(-2)) \to H^{1}(Q_{n}, TQ_{n}(-2)) \to H^{1}(Q_{n}, \mathcal{I}_{Z}(r-2)) \simeq \mathbb{C} \to 0,
			\end{equation}
			Therefore, $T_{\sF}$ is aCM.

		\end{dem}

		%
		%
		%
		%

		
		
		
		
		
		By using the splitting criteria (Theorem \ref{criterium}), we improve the classification on Theorem \ref{thm3.7} to check when the tangent sheaf of a distribution is only split.

		\begin{Proposition} With the same condition on Theorem \ref{thm3.7} on has the following
			
			\begin{enumerate}
				
				\item[a)] If $n=4$ and $H^{3}(Q_{4}, T_{\sF} \otimes S(t)) = 0$ for $t \leq -3,$ then $T_{\sF}$ splits; \\
				
				\item[b)] If $n=5$ then either $T_{\sF}$ splits or it is the spinor bundle; \\
				
				\item[c)] If $n=6$ and $H^{5}(Q_{6}, T_{\sF} \otimes S(t)) = 0$ for $t \leq -5,$ then $T_{\sF}$ splits; \\
				
				\item[d)] If $n>6$ then $T_{\sF}$ splits. \\
				
			\end{enumerate}
		\end{Proposition}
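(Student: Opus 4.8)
The plan is to combine the structural conclusion of Theorem \ref{thm3.7} with a rank count, and, in the cases where the rank count is inconclusive, with Ottaviani's splitting criterion (Theorem \ref{criterium}). By Theorem \ref{thm3.7}, under the standing hypotheses $T_{\sF}$ has no intermediate cohomology, so Theorem \ref{teo 2.7} furnishes a decomposition
$$T_{\sF} = \bigoplus_i \mathcal{O}_{Q_n}(t_i) \oplus S(s),$$
where $S$ is a spinor bundle and the spinor summand may or may not occur. Since $\rank T_{\sF} = n-1$, the whole question is whether this spinor summand is forced to be absent, and I would control that by comparing $\rank S$ with $n-1$, recalling that $\rank S = 2^{k}$ when $n=2k+1$ and $\rank S = 2^{k-1}$ when $n=2k$.

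For items (b) and (d) the rank count settles everything. When $n>6$ one has $\rank S > n-1$ in both parities (for odd $n=2k+1\geq 7$ one has $2^{k}>2k$, and for even $n=2k\geq 8$ one has $2^{k-1}>2k-1$), so a spinor summand cannot fit inside $T_{\sF}$; hence the spinor part is absent and $T_{\sF}$ splits, which is (d). When $n=5$ one has $\rank S = 2^{2} = 4 = n-1$, so a spinor summand, if present, already exhausts the full rank of $T_{\sF}$; thus either the spinor part is absent and $T_{\sF}$ splits, or $T_{\sF}=S(s)$ is a twist of the spinor bundle itself, which is item (b).

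For items (a) and (c) the rank count is inconclusive: for $n=4$ one has $\rank S = 2 < 3 = n-1$ and for $n=6$ one has $\rank S = 4 < 5 = n-1$, so a decomposition $T_{\sF} = \mathcal{O}_{Q_n}(a)\oplus S(b)$ is a priori possible and must be excluded. Here I would invoke Theorem \ref{criterium}. Its first condition, $H^{i}_{\ast}(T_{\sF})=0$ for $0<i<n$, already holds because $T_{\sF}$ is aCM by Theorem \ref{thm3.7}, so it remains to verify $H^{n-1}_{\ast}(T_{\sF}\otimes S)=0$. Tensoring the decomposition above by $S$ gives
$$T_{\sF}\otimes S(t) = \bigoplus_i S(t_i+t) \oplus (S\otimes S)(s+t),$$
and since spinor bundles have no intermediate cohomology, $H^{n-1}\big(Q_n, S(t_i+t)\big)=0$ for every $t$. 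Thus the line-bundle summands contribute nothing, and the criterion reduces to showing $H^{n-1}\big(Q_n,(S\otimes S)(m)\big)=0$ for all twists $m$; equivalently, $H^{n-1}(Q_n, T_{\sF}\otimes S(t))=0$ for all $t\in\Z$.

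It remains to establish this last vanishing for all $t$. For $t\leq -(n-1)$ it is exactly the standing hypothesis of items (a) and (c). For $t\gg 0$ it follows from Serre vanishing, and for $t\ll 0$ from Serre duality on $Q_n$ (using $K_{Q_n}=\mathcal{O}_{Q_n}(-n)$ and the self-duality of spinor bundles up to twist) together with Serre vanishing; hence only a bounded intermediate range of $t$ can survive. On that range I would decompose the tensor product $S\otimes S$ of spinor bundles into twisted exterior powers of differential forms, using the defining exact sequences of the spinor bundles and, for $n=4$, the isomorphism $Q_4\cong Gr(2,4)$, which lets one compute everything through the Borel--Weil--Bott Theorem \ref{BWB}; then Bott's formula for quadrics (Theorem \ref{BottQ}) applied term by term yields $H^{n-1}=0$ there as well. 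The main obstacle is precisely this explicit decomposition of $S\otimes S$ and the bookkeeping needed to confirm that the only twists at which $H^{n-1}\big((S\otimes S)(m)\big)$ can be nonzero are those already covered by the hypothesis $t\leq -(n-1)$; once this is done, the second condition of Theorem \ref{criterium} holds and $T_{\sF}$ splits, completing (a) and (c). In the even-dimensional cases one must in addition keep track of the two distinct spinor bundles $S'$ and $S''$ and check the criterion against the appropriate one.
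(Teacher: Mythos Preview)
Your treatment of (b) and (d) via the rank count is correct and matches the paper's. The gap is in (a) and (c). Your plan there is to verify the second condition of Theorem~\ref{criterium} by first writing $T_{\sF}=\bigoplus_i\mathcal{O}_{Q_n}(t_i)\oplus S(s)$ and then computing $H^{n-1}_*\big(Q_n,S\otimes S\big)$ term by term. But this is circular: you are using the very decomposition whose spinor summand you are trying to rule out. Worse, the computation you propose cannot succeed. Applying Theorem~\ref{criterium} to the spinor bundle $S$ itself (which is aCM but does not split) shows that $H^{n-1}_*\big(Q_n,S\otimes S\big)\neq 0$. So no amount of bookkeeping with Borel--Weil--Bott will make the spinor-by-spinor contribution vanish identically; your ``main obstacle'' is a genuine obstruction, not a routine calculation. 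In particular, if the spinor summand were present you would \emph{not} obtain $H^{n-1}\big(Q_n,T_{\sF}\otimes S(t)\big)=0$ for all $t$ from the decomposition alone, and the argument collapses.

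The paper avoids this by never appealing to a decomposition of $T_{\sF}$. Instead it tensors the defining exact sequence of the distribution by $S(t)$,
\[
0 \longrightarrow T_{\sF}\otimes S(t) \longrightarrow TQ_n\otimes S(t) \longrightarrow \mathcal{I}_Z\otimes S(r+t) \longrightarrow 0,
\]
and bounds $H^{n-1}\big(Q_n,T_{\sF}\otimes S(t)\big)$ by its neighbours in the long exact sequence. The term $H^{n-2}\big(Q_n,\mathcal{I}_Z\otimes S(r+t)\big)$ vanishes for every $t$ because $Z$ is arithmetically Buchsbaum and $S$ admits a two-term resolution by sums of line bundles on the ambient $\mathbb{P}^{n+1}$ (cf.\ \cite{Yo}); the term $H^{n-1}\big(Q_n,TQ_n\otimes S(t)\big)$ vanishes for $t>-(n-1)$ by the vanishing theorem of Laytimi--Nagaraj \cite{LayNa}. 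This handles precisely the range $t>-(n-1)$ that your argument could not reach, while the stated hypothesis takes care of $t\leq -(n-1)$, and Theorem~\ref{criterium} then forces $T_{\sF}$ to split.
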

		
		\begin{dem} We will only show the assertion $a),$ since the $c)$ is analogous. The assertion $b)$ is the same of the Theorem \ref{thm3.7} and assertion $d)$ follows by the rank condition. The Theorem \ref{thm3.7} shows that $T_{\sF}$ has no intermediate cohomology, thus for to conclude, we need to prove the last condition $H^{n-1}_{\ast}(Q_{n}, T_{\sF} \otimes S) =0$ in Theorem \ref{criterium}. In $Q_{4},$ a spinor bundle has rank 2. We consider the exact sequence defining the distribution and the the specific cohomology sequence
			
			\[
			\resizebox{\textwidth}{!}{$
				\cdots \to H^{2}(Q_{4}, \mathcal{I}_{Z}\otimes S(r+t))  \to  H^{3}(Q_{4}, T_{\sF}\otimes S(t))
				\to H^{3}(Q_{4},TQ_{4}\otimes S(t)) \to H^{3}(Q_{4}, \mathcal{I}_{Z}\otimes S(r+t)) \to \cdots 
				$}
			\]
			
			To see that $H^{2}(Q_{4}, \mathcal{I}_{Z}\otimes S(r+t)) = 0,$ we tensorize the resolution of $S$, see \cite[Theorem 1.5, p.6]{Yo}
			$$0 \to \mathcal{O}_{\mathbb{P}^{5}}(-1)^{\oplus a_{1}} \to \mathcal{O}_{\mathbb{P}^{5}}^{\oplus a_{0}} \to S \to 0,$$
			
			\noindent by $\mathcal{I}_{Z}(r+t)$ and we use the hypothesis that $Z$ is aB.
			
			Moreover, $ H^{3}(Q_{4},TQ_{4}\otimes S(t)) = 0$ for $t > -3$ by Laytimi and Nagaraj, see \cite[Theorem 2.5 (i), p.470]{LayNa} and with the hypotheses, we conclude that $H^{3}_{\ast}(Q_{4}, T_{\sF}\otimes S) = 0.$ Therefore, $T_{\sF}$ splits as a direct sum of line bundles.
			
		\end{dem}
		
		\begin{Proposition}\label{TFsplit}
			Let $\sF$ be a codimension one distribution on $Q_n,$ with split tangent 	sheaf, i.e., $ T_\sF = \oplus_{i=1}^{n-1} \mathcal{O}_{Q_n}(a_i).$ Then, $a_i < 1.$
		\end{Proposition}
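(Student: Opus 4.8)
The plan is to translate the splitting of $T_\sF$ into the existence of global sections of twists of the tangent bundle of $Q_n$, and then to show that $TQ_n$ admits no such sections once the twist is negative. Recall that a codimension one distribution fits into the exact sequence
$$0 \to T_\sF \stackrel{\phi}{\to} TQ_n \to \mathcal{I}_{Z}(r) \to 0,$$
in which $\phi$ is injective. Under the hypothesis $T_\sF = \bigoplus_{i=1}^{n-1} \mathcal{O}_{Q_n}(a_i)$, I would first restrict $\phi$ to each line-bundle summand: since $\phi$ is injective, its restriction to the nonzero subsheaf $\mathcal{O}_{Q_n}(a_i)$ is again injective, hence nonzero, giving a nonzero morphism $\mathcal{O}_{Q_n}(a_i) \to TQ_n$, equivalently a nonzero global section $s_i \in H^0(Q_n, TQ_n(-a_i))$. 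Thus the whole proposition reduces to determining for which integers $t$ one has $H^0(Q_n, TQ_n(t)) \neq 0$.

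Second, I would compute this group via the isomorphism $TQ_n \cong \Omega_{Q_n}^{n-1}(n)$, so that $TQ_n(t) \cong \Omega_{Q_n}^{n-1}(n+t)$, and feed $q = n-1$ and $k = n+t$ into Bott's formula for quadrics (Theorem \ref{BottQ}). The claim to establish is that $H^0(Q_n, TQ_n(t)) \neq 0$ if and only if $t \geq 0$. For $t \geq 0$ one has $k = n+t \geq n > n-1 = q$, so case $(4)$ of Theorem \ref{BottQ} yields $H^0 \neq 0$, consistently with $TQ_n$ being globally generated. For $t \leq -1$ one checks that the pair $(q,k)$ falls into one of the remaining cases of Theorem \ref{BottQ}, each of which either forces all cohomology to vanish or forces the only nonvanishing cohomology to sit in a degree $p \neq 0$; in every instance $H^0 = 0$.

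Finally, combining the two steps: the section $s_i \neq 0$ lives in $H^0(Q_n, TQ_n(-a_i))$, so the vanishing just established forces $-a_i \geq 0$, i.e. $a_i \leq 0$; since the $a_i$ are integers this is exactly $a_i < 1$. The only delicate point is the case analysis for $t \leq -1$ in the second step, precisely because the values $t = -1$ (where $k = q$), $t = -2$ (where $k = -n+2q$), $t = -n$ (where $k = 0$), and $t = -n-1$ (where $k = -n+q$) are exactly the exceptional twists singled out in Bott's formula. One must verify that each of these boundary cases lands in case $(1)$, $(2)$, $(3)$, or $(5)$ with the surviving cohomology in a positive degree, so that $H^0$ still vanishes and the bound $a_i < 1$ holds for all $i$.
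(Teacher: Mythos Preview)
Your proposal is correct and follows essentially the same approach as the paper: interpret the inclusion $T_\sF \hookrightarrow TQ_n$ as an element of $\Hom(T_\sF,TQ_n)\simeq\bigoplus_i H^0(Q_n,TQ_n(-a_i))$, and then use Bott's formula for quadrics via $TQ_n\cong\Omega^{n-1}_{Q_n}(n)$ to see that $H^0(Q_n,TQ_n(-a_i))\neq 0$ forces $a_i\le 0$. Your version is in fact slightly more careful than the paper's, since you make explicit that the restriction of $\phi$ to each line-bundle summand is itself nonzero (which is what guarantees that \emph{every} $a_i$ satisfies the bound, not merely some of them), and you spell out the boundary cases $t=-1,-2,-n,-n-1$ in the Bott computation.
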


		\begin{dem}
			Indeed, we get
			\small{
				\begin{eqnarray*}
					T_\sF \hookrightarrow T_{Q_n} \in Hom (T_\sF, TQ_n)
					&\simeq& T_\sF^{\vee} \otimes TQ_n \\
					&\simeq& (\mathcal{O}_{Q_n}(-a_1) \otimes TQ_n) \oplus (\mathcal{O}_{Q_n}(-a_2) \otimes TQ_n) \oplus \cdots \oplus (\mathcal{O}_{Q_n}(-a_{n-1}) \otimes TQ_n).
			\end{eqnarray*} }
			Thus, we have $$H^0 (Q_{n}\oplus_{i=1}^{n-1} (\mathcal{O}_{Q_n}(-a_i) \otimes TQ_n)) = \oplus_{i=1}^{n-1} (H^0 (Q_{n}\mathcal{O}_{Q_n}(-a_i) \otimes TQ_n) ),$$
			and by Bott's fórmula for quadric, its has section when $a_i <1.$ 
			
		\end{dem}

		Now, under certain conditions, we expand the results above from codimension one holomorphic distribution to dimensions one, two, and three.
		
		\begin{thm}\label{dim2} Let $\sF$ be a dimension two holomorphic distribution on $Q_{n}$ ($n$ is even and $n \geq 4$) whose its singular set $Z$ has pure expected dimension. If $T_{\sF}$ splits, then $Z$ is aCM.
			
		\end{thm}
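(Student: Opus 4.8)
The plan is to mimic the Eagon--Northcott argument used for the Grassmannian, now exploiting Bott's formula for quadrics (Theorem \ref{BottQ}) together with the parity of $n$. Since $\sF$ has dimension two, its tangent sheaf has rank two and codimension $c=n-2$, so the singular scheme $Z$ has pure expected codimension $c+1=n-1$, i.e. pure dimension one. Writing the split tangent sheaf as $T_{\sF}=\mathcal{O}_{Q_{n}}(a_{1})\oplus\mathcal{O}_{Q_{n}}(a_{2})$, every symmetric power $S_{\ell}(T_{\sF})$ and the determinant $\det(T_{\sF})=\mathcal{O}_{Q_{n}}(a_{1}+a_{2})$ are again direct sums of line bundles. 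I would then feed the cosection $\eta^{\ast}\colon\Omega^{1}_{Q_{n}}\to T_{\sF}^{\vee}$, the dual of the inclusion $T_{\sF}\hookrightarrow TQ_{n}$, whose degeneracy scheme is exactly $Z$, into the Eagon--Northcott complex of Subsection \ref{eagon northcott} with $\cA=\Omega^{1}_{Q_{n}}$ of rank $n$ and $\cB=T_{\sF}^{\vee}$ of rank two. This resolves $\mathcal{I}_{Z}$ by
\begin{equation*}
0\to\Omega^{n}_{Q_{n}}\otimes S_{n-2}(T_{\sF})\otimes\det(T_{\sF})\to\cdots\to\Omega^{3}_{Q_{n}}\otimes T_{\sF}\otimes\det(T_{\sF})\to\Omega^{2}_{Q_{n}}\otimes\det(T_{\sF})\to\mathcal{I}_{Z}\to0,
\end{equation*}
whose $\ell$-th term is $\mathcal{E}_{\ell}:=\Omega^{\,2+\ell}_{Q_{n}}\otimes S_{\ell}(T_{\sF})\otimes\det(T_{\sF})$ for $0\le\ell\le n-2$. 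Because $T_{\sF}$ splits, each $\mathcal{E}_{\ell}$ is a direct sum of twisted forms $\Omega^{\,2+\ell}_{Q_{n}}(m)$, and the leftmost one, using $\Omega^{n}_{Q_{n}}=\mathcal{O}_{Q_{n}}(-n)$, is a direct sum of line bundles.

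Next I would twist by $\mathcal{O}_{Q_{n}}(t)$, break the resolution into short exact sequences $0\to\mathcal{K}_{\ell}\to\mathcal{E}_{\ell}(t)\to\mathcal{K}_{\ell-1}(t)\to0$ with $\mathcal{K}_{-1}=\mathcal{I}_{Z}$, and run the standard dimension shift. From the rightmost sequence one gets $H^{1}(Q_{n},\mathcal{I}_{Z}(t))\hookrightarrow H^{2}(Q_{n},\mathcal{K}_{0}(t))$ as soon as $H^{1}(Q_{n},\mathcal{E}_{0}(t))=0$, and inductively $H^{\ell+1}(Q_{n},\mathcal{K}_{\ell-1}(t))\hookrightarrow H^{\ell+2}(Q_{n},\mathcal{K}_{\ell}(t))$ whenever $H^{\ell+1}(Q_{n},\mathcal{E}_{\ell}(t))=0$. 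Chaining these injections, and using that $\mathcal{K}_{n-3}$ coincides with the leftmost term $\mathcal{E}_{n-2}$, reduces the whole matter to two vanishings: $H^{q-1}(Q_{n},\Omega^{q}_{Q_{n}}(s))=0$ for $2\le q\le n-1$ and all $s\in\mathbb{Z}$, and $H^{n-1}(Q_{n},\mathcal{E}_{n-2}(t))=0$. The latter is immediate, since $\mathcal{E}_{n-2}(t)$ is a sum of line bundles on $Q_{n}$ and line bundles have no intermediate cohomology, so $H^{n-1}$ vanishes because $0<n-1<n$.

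The crux is the vanishing $H^{q-1}(Q_{n},\Omega^{q}_{Q_{n}}(s))=0$ for $2\le q\le n-1$. Inspecting Bott's formula (Theorem \ref{BottQ}), the group $H^{q-1}(\Omega^{q}(s))$ is forced to vanish in every case except possibly item (3), where $s=-n+2q$ places the only nonzero cohomology in degree $n-q$; this coincides with $q-1$ precisely when $q=(n+1)/2$. In the remaining ranges the nonzero cohomology sits in degree $0$, $q$, or $n$, none of which equals $q-1$ for $2\le q\le n-1$. Here is exactly where the hypothesis that $n$ is even enters: $(n+1)/2$ is not an integer, so this single obstruction never arises and the vanishing holds for all $s$. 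Consequently all intermediate groups $H^{\ell+1}(Q_{n},\mathcal{E}_{\ell}(t))$ vanish, the chain of injections collapses, and $H^{1}(Q_{n},\mathcal{I}_{Z}(t))=0$ for every $t$. Since $Z$ has pure dimension one, this is precisely the cohomological criterion for $Z$ to be aCM. I expect the only genuine subtlety to be the bookkeeping of the Bott ranges, ensuring that no other case of the formula slips a nonzero group into degree $q-1$; the parity argument itself is the decisive and clean step.
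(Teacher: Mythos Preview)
Your proposal is correct and follows essentially the same route as the paper: both build the Eagon--Northcott resolution of $\mathcal{I}_{Z}$ from $\eta^{\ast}\colon\Omega^{1}_{Q_{n}}\to T_{\sF}^{\vee}$, break it into short exact sequences, and chase a chain of injections down to $H^{n-1}$ of a sum of line bundles. The only cosmetic difference is that the paper invokes Theorem~\ref{flenner} (Flenner) for the key vanishing $H^{q-1}(Q_{n},\Omega^{q}_{Q_{n}}(s))=0$, while you use Theorem~\ref{BottQ} (Bott for quadrics); both impose the same obstruction $q\neq(n+1)/2$, and your write-up has the virtue of making explicit that this is exactly where the hypothesis ``$n$ even'' is consumed.
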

		
		\begin{proof} We use the Eagon-Northcott resolution, associated with the surjective morphism 
			$ \eta^{\ast} : \Omega^{1}_{Q_{n}} \to T_{\sF}^{\vee}$ defining the distribution.
			Tensoring it by $\mathcal{O}_{Q_{n}}(t)$, we get
			
			\begin{center}
				$0 \to \Omega^n_{Q_{n}} \otimes S_{n-2}(T_{\sF})\otimes \det T_{\sF}(t) \stackrel{\varphi_{n-2}}{\to}
				\Omega^{n-1}_{Q_{n}} \otimes S_{n-3}(T_{\sF})\otimes \det T_{\sF}(t)  \stackrel{\varphi_{n-3}}{\to} \cdots$
			\end{center}
			
			\begin{center}
				$\cdots  \to  \Omega^{3}_{Q_{n}} \otimes T_{\sF}\otimes \det T_{\sF}(t) \stackrel{\varphi_1}{\to} \Omega^{2}_{Q_{n}} \otimes \det T_{\sF}(t) \stackrel{\varphi_0}{\to} \mathcal{I}_Z(t) \to 0.$
			\end{center}

			Now, we break it down into some short exact sequences as follows:
			\begin{center}
				$0 \to \Omega^n_{Q_{n}} \otimes S_{n-2}(T_{\sF}) \otimes \det T_{\sF}(t) \to \Omega^{n-1}_{Q_{n}} \otimes S_{n-3}(T_{\sF}) \otimes \det T_{\sF}(t) \to \Ker \; \varphi_{n-4}(t) \to 0$ 	
				$$\vdots$$
				$0 \to  \Ker \; \varphi_{n-2-i}(t)  \to \Omega^{n-i}_{Q_{n}} \otimes S_{n-2-i}(T_{\sF})\otimes \det T_{\sF}(t) \to \Ker \; \varphi_{n-3-i} (t) \to 0$ 	
				$$\vdots$$
				$0 \to  \Ker \; \varphi_{0}(t)  \to \Omega^{2}_{Q_{n}} \otimes \det T_{\sF}(t) \to  \mathcal{I}_Z(t)  \to 0.$ 
			\end{center}
			
			
			Take the long exact sequence of cohomology, for $i = 1, \ldots, n-2,$
			$$
			\begin{array}{cccc}
				
				\cdots \to  H^{n-1-i}(Q_{n}, \Omega^{n-i}\otimes S_{n-2-i}(T_{\sF})\otimes \det T_{\sF}(t)) \to H^{n-1-i}(Q_{n},\Ker \; \varphi_{n-3-i}(t)) \to & \\ \\
				
				\to H^{n-i}(Q_{n}, \Ker \; \varphi_{n-2-i}(t)) \to H^{n-i}(Q_{n},\Omega^{n-i}\otimes S_{n-2-i}(T_{\sF})\otimes \det T_{\sF}(t)) \to \cdots 
			\end{array}$$
			
			By Theorem \ref{flenner} and since $\sF$ splits so does its symmetric powers, we get 
			
			$$H^{n-1-i}(Q_{n}, \Omega^{n-i}\otimes S_{n-2-i}(T_{\sF})\otimes \det T_{\sF}(t)) =0 \ \ \mbox{for} \ \ i = 1, \ldots, n-2.$$ 
			This gives us an increasing inclusion

\begin{eqnarray*}
H^{1}(Q_{n}, \mathcal{I}_{Z}(t)) \subset H^{2}(Q_{n}, \Ker \; \varphi_{0}(t) ) \subset \cdots & \subset & H^{n-2}(Q_{n}, \Ker \; \varphi_{n-4}(t)) \\			 
		& \subset & H^{n-1}(Q_{n}, \Omega^{n}\otimes S_{n-2}(T_{\sF})\otimes \det(T_{\sF})(t)) =0, 
\end{eqnarray*}

			\noindent where last group is zero by again the result in Theorem \ref{flenner}. So $H^{1}_{\ast}(Q_{n},\mathcal{I}_{Z})=0$ and then $Z$ is aCM as desired.
			
		\end{proof}
		
		\begin{thm} Let $\sF$ be a dimension three holomorphic distribution on $Q_{5}$ whose its singular set $Z$ has pure expected dimension. If $T_{\sF}$ splits as $T_{\sF} = \mathcal{O}_{Q_{5}}(a_{1}) \oplus \mathcal{O}_{Q_{5}}(a_{2})\oplus \mathcal{O}_{Q_{5}}(a_{3})$, then $h^{2}(Q_{5}, \mathcal{I}_{Z}(1-a_{1}-a_{2}-a_{2})) =1$ being the only nonzero
			intermediate cohomology for $H^{i}(Q_{5},\mathcal{I}_{Z}).$
		\end{thm}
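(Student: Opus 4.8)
The plan is to resolve $\mathcal{I}_Z$ by the Eagon--Northcott complex of the cosection $\eta^{\ast}\colon \Omega^1_{Q_5}\to T_\sF^{\vee}$ that defines $\sF$, exactly as in the proof of Theorem \ref{dim2}, and then to read off $H^{\bullet}_{\ast}(Q_5,\mathcal{I}_Z)$ term by term. Since $\sF$ has dimension three on $Q_5$, its tangent sheaf has rank $3$, so $Z$ has pure expected codimension $5-3+1=3$, i.e. $\dim Z=2$, and the resolution applies. Set $s:=a_1+a_2+a_3$, so that $\det T_\sF=\mathcal{O}_{Q_5}(s)$; because $T_\sF$ splits, both $T_\sF$ and $S_2(T_\sF)$ are direct sums of line bundles, and the resolution reduces to the three-term complex
$$0 \to \Omega^5_{Q_5}\otimes S_2(T_\sF)(s) \to (\Omega^4_{Q_5}\otimes T_\sF)(s) \to \Omega^3_{Q_5}(s) \to \mathcal{I}_Z \to 0 .$$
Note that $\Omega^5_{Q_5}=K_{Q_5}=\mathcal{O}_{Q_5}(-5)$ is a line bundle, so the leftmost term is itself a direct sum of line bundles $\mathcal{O}_{Q_5}(m)$.

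First I would twist by $\mathcal{O}_{Q_5}(t)$ and break the complex into the two short exact sequences
$$0 \to \Ker\varphi_0(t) \to \Omega^3_{Q_5}(s+t) \to \mathcal{I}_Z(t) \to 0$$
and
$$0 \to \Omega^5_{Q_5}\otimes S_2(T_\sF)(s+t) \to (\Omega^4_{Q_5}\otimes T_\sF)(s+t) \to \Ker\varphi_0(t) \to 0 .$$
Next I would record the vanishing of the two middle sheaves. By Theorem \ref{flenner} one has $H^1(Q_5,\Omega^3_{Q_5}(k))=0$, $H^2(Q_5,\Omega^4_{Q_5}(k))=0$ and $H^3(Q_5,\Omega^4_{Q_5}(k))=0$ for every $k$, since in each case $0<p<5$, $p+q\neq 5$ and $p\neq q$. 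Moreover line bundles on $Q_5$ have no intermediate cohomology, so $H^3$ and $H^4$ of the leftmost term vanish for every twist. Feeding these into the second short exact sequence gives
$$H^2(Q_5,\Ker\varphi_0(t))=H^3(Q_5,\Ker\varphi_0(t))=0 \quad\text{for all } t .$$

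Now I would run the long exact sequence of the first short exact sequence. From $H^1(Q_5,\Omega^3_{Q_5}(s+t))=0$ together with $H^2(Q_5,\Ker\varphi_0(t))=0$ I obtain $H^1(Q_5,\mathcal{I}_Z(t))=0$ for all $t$; and the vanishing $H^2(Q_5,\Ker\varphi_0(t))=H^3(Q_5,\Ker\varphi_0(t))=0$ yields the isomorphism
$$H^2(Q_5,\mathcal{I}_Z(t)) \simeq H^2(Q_5,\Omega^3_{Q_5}(s+t)) .$$
Finally, $H^2(Q_5,\Omega^3_{Q_5}(k))$ is the middle group $H^{n-q}(\Omega^q(2q-n))$ with $n=5$, $q=3$ and $2q-n=1$: by Theorem \ref{BottQ} it vanishes unless $k=1$, and there it is one-dimensional. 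Hence $H^2(Q_5,\mathcal{I}_Z(t))$ is nonzero precisely when $s+t=1$, i.e. $t=1-a_1-a_2-a_3$, where it equals $\mathbb{C}$; this is the assertion, and it exhibits $Z$ as arithmetically Buchsbaum with $h^2(Q_5,\mathcal{I}_Z(1-a_1-a_2-a_3))=1$ as its only nonzero intermediate cohomology.

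The step I expect to be the main obstacle is the very last one. Theorem \ref{BottQ}(3) only asserts the \emph{nonvanishing} of $H^{n-q}(Q_n,\Omega^q(2q-n))$, whereas the conclusion needs the sharp value $h^2(Q_5,\Omega^3_{Q_5}(1))=1$. This single primitive class has to be extracted from the precise dimension formulas of \cite{S} (equivalently, from the Hodge-theoretic description of the cohomology of the quadric) --- the same input already used implicitly in the codimension-one theorems of this section, where $h^1(Q_n,TQ_n(-2))=1$ is obtained from the analogous middle group via $TQ_n(t)\simeq\Omega^{n-1}_{Q_n}(t+n)$.
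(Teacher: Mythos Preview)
Your argument is correct and follows essentially the same route as the paper: the Eagon--Northcott resolution of $\mathcal{I}_Z$ is broken into the same two short exact sequences, the vanishings $H^2_\ast(\Ker\varphi_0)=H^3_\ast(\Ker\varphi_0)=0$ are obtained from Theorem \ref{flenner} and the fact that $\Omega^5_{Q_5}\otimes S_2(T_\sF)(s)$ is a sum of line bundles, and the final identification $h^2(\mathcal{I}_Z(t))=h^2(\Omega^3_{Q_5}(s+t))$ is evaluated via Bott's formula together with \cite[table 3, p.175]{S}. The ``obstacle'' you flag is exactly the point where the paper invokes Snow's explicit table to get $h^2(Q_5,\Omega^3_{Q_5}(1))=1$, so there is no gap.
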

		
		\begin{proof} We consider again the Eagon-Northcott resolution associated the surjective morphism defining the distribution 
			$$\eta^{\ast} : \Omega_{Q_{5}}^{1} \to T_{\sF}^{\vee},$$
			
			\noindent tensoring by $\mathcal{O}_{Q_{5}}(t)$ and breaks it into short exact sequences \\
			
			$\begin{array}{ccc}

				0 \to \Omega^5_{Q_{5}} \otimes S_{2}(T_{\sF}) \otimes \det T_{\sF}(t) \to \Omega^{4}_{Q_{5}} \otimes T_{\sF} \otimes \det T_{\sF}(t) \to \Ker \varphi_{0}(t) \to 0 \\ \\
				
				0 \to \Ker \varphi_{0}(t) \to \Omega^3_{Q_{5}}\otimes \det(T_{\sF})(t) \to \mathcal{I}_{Z}(t) \to 0.
				
			\end{array}$ \\

			Since $H^{1}(Q_{5},\Omega^3_{Q_{5}}\otimes \det(T_{\sF})(t)) =0,$ for all $t \in \mathbb{Z}$ by Theorem \ref{flenner} we have  $H^{1}(Q_{5}, \mathcal{I}_{Z}(t)) \subset H^{2}(Q_{5}, \Ker \varphi_{0}(t)).$ We have again by Theorem \ref{flenner} that $H^{2}(Q_{5},\Omega^{4}_{Q_{5}} \otimes T_{\sF} \otimes \det T_{\sF}(t)) =0$ then $H^{2}(Q_{5}, \Ker \varphi_{0}(t)) \subset H^{3}(Q_{5}, \Omega^5_{Q_{5}} \otimes S_{2}(T_{\sF})\otimes\det(T_{\sF})(t)) =0.$ Thus $H^{2}_{\ast}(Q_{5},\Ker \varphi_{0})=H^{1}_{\ast}(Q_{5}, \mathcal{I}_{Z}) =0.$
			
			On the other hand as $H^{3}(Q_{5}, \Omega^{4}_{Q_{5}}\otimes T_{\sF}\otimes \det T_{\sF}(t)) = H^{4}(Q_{5}, \Omega^{5}_{Q_{5}}\otimes S_{2}(T_{\sF})\otimes \det T_{\sF}(t)) = 0$ we get $ H^{3}_{\ast}(Q_{5},\Ker \varphi_{0}(t)) = 0$. Then we have by Bott's formula for quadrics, and \cite[table 3, p.175]{S}

			$$\begin{array}{rlcc}
				
				h^{2}(Q_{5}, \mathcal{I}_{Z}(t)) = & h^{2}(Q_{5}, \Omega^{3}_{Q_{5}}\otimes \det T_{\sF}(t)) \\ \\

				= & h^{2}(Q_{5}, \Omega^{3}_{Q_{5}}(a_1 + a_2 + a_3 +t)) \\ \\
				
				= & \left\{\begin{array}{llll} 0, & t \neq 1-a_1 - a_2 - a_3  \\ \\
					
					1, & t = 1-a_1 - a_2 - a_3.
					
				\end{array}\right.
				
			\end{array}
			$$

			Therefore, $h^{2}(Q_{5}, \mathcal{I}_{Z}(1-a_{1}-a_{2}-a_{2})) =1$ being the only nonzero
			intermediate cohomology for $H^{i}(Q_{5},\mathcal{I}_{Z})$ as desired.
			
		\end{proof}

			Considering on the result in \cite[Theorem 11.8]{MOJ}, we produce an example of codimension one distribution on $Q_5,$ whose tangent sheaf is isomorphic to the spinor bundle up to twist.

			\begin{Example}
				The tangent sheaf $T_{\sF}$ of a codimension one distribution on $Q_5,$ can be seen as the bundle spinor twisted by $\mathcal{O}_{Q_5}(-t),$ for all $t \geq 0$. 
				Indeed, since the dual of the spinor bundle  $S^{\vee}$ is globally generated, $S^{\vee}(t)$ is globally generated, for all $t \geq 0$. Consequently,
				$S^{\vee}(t) \otimes TQ_5$ is also globally generated, since $TQ_5$ is globally generated. 
				Now, to show that there is such codimension one distribution, we apply \cite[Theorem A.3]{MOJ} with $L =  \mathcal{O}_{Q_5}.$ Finally, we get
				$$ 0 \to S (-t) \to TQ_5 \to \mathcal{I}_Z (7+4t) \to 0.$$
				
			\end{Example}

			\begin{Remark} 
				The stability of $TQ_5$ implies that there are no injective morphisms $S(-t) \hookrightarrow TQ_5$ when $t \leq -2,$ since $\mu(S(-t)) = -\frac{1}{2}-t > 1 = \mu(TQ_5).$
					
				\end{Remark}

				
				%
				%
				
				\bigskip
				
				%
				%

				In this part of the paper we dear with conormal sheaf of a one-dimensional distribution and given condition for it to be aCM sheave. Definition \ref{distributionTF} provides an alternative way to define a foliation using a coherent subsheaf $ N_{\sF}^{\vee}$ of $\Omega^{1}_{Q_{n}}$ such that $ N_{\sF}^{\vee}$ is integrable and the quotient $\Omega_{Q_{n}}^{1} / N_{\sF}^{\vee}$ is torsion free. The codimension of $\sF$ is the generic rank of $N_{\sF}^{\vee}$ and let us denote $r = c_{1}(\Omega_{Q_{n}}^{1}) - c_{1}( N_{\sF}^{\vee}).$

				A. Cavalcante, M. Corrêa and S. Marchesi, showed in \cite{AMS} that if the conormal sheaf of a 
				foliation of dimension one on Fano 
				threefold splits, then its singular scheme is arithmetically Buchsbaum with 
				$h^1(\mathcal{I}_Z(r)) =1$ being the only nonzero intermediate cohomology \cite[Theorem 4.1]{AMS}.
				Based in this result, we extend it for any quadric hypersuface $Q_n, n>3.$

				\begin{thm} \label{conormal} Let $\sF$ be a one-dimensional holomorphic distribution on $Q_{n}.$ If the conormal sheaf $N_{\sF}^{\vee}$ is aCM, then $Z$ is arithmetically Buchsbaum with $h^{1}(Q_{n}, {\mathcal I}_{Z}(r)) =1$ being the only nonzero intermediate cohomology.
					
				\end{thm}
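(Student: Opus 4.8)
The plan is to reduce everything to the fundamental exact sequence carried by the conormal sheaf of a one-dimensional distribution. Since $\sF$ has dimension one, the conormal sheaf $N_{\sF}^{\vee}$ realizes $\sF$ as a saturated subsheaf of $\Omega^{1}_{Q_{n}}$ of generic rank $n-1$ whose quotient $\Omega^{1}_{Q_{n}}/N_{\sF}^{\vee}$ is torsion free of rank one. A rank one torsion free sheaf is of the form $\mathcal{I}_{W}\otimes L$; here its first Chern class is $c_{1}(\Omega^{1}_{Q_{n}})-c_{1}(N_{\sF}^{\vee})=r$ and it fails to be locally free precisely along $Z=\Sing(\sF)$, which has codimension at least two, so the quotient is identified with $\mathcal{I}_{Z}(r)$. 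First I would record the sequence
$$0 \to N_{\sF}^{\vee} \to \Omega^{1}_{Q_{n}} \to \mathcal{I}_{Z}(r) \to 0,$$
twist it by $\mathcal{O}_{Q_{n}}(t)$ for arbitrary $t \in \Z$, and pass to the associated long exact sequence in cohomology.

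The core is then a vanishing comparison. Because $N_{\sF}^{\vee}$ is aCM, one has $H^{i}_{\ast}(Q_{n}, N_{\sF}^{\vee}) = 0$ for $1 \leq i \leq n-1$; substituting this into the long exact sequence annihilates the two neighbouring $N_{\sF}^{\vee}$ terms and produces isomorphisms
$$H^{i}(Q_{n}, \Omega^{1}_{Q_{n}}(t)) \simeq H^{i}(Q_{n}, \mathcal{I}_{Z}(r+t)) \quad \text{for } 1 \leq i \leq n-2 \text{ and all } t.$$
Next I would evaluate the left-hand side by Flenner's theorem (Theorem \ref{flenner}) together with Bott's formula (Theorem \ref{BottQ}) applied to $q=1$: one finds $H^{i}(Q_{n}, \Omega^{1}_{Q_{n}}(t)) = 0$ for every $t$ when $2 \leq i \leq n-2$, while for $i=1$ it vanishes for $t \neq 0$ and equals $\mathbb{C}$ for $t = 0$ (using the diagonal value $h^{1}(\Omega^{1}_{Q_{n}})=1$, valid since $n>2$). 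Transporting these values across the isomorphisms gives $H^{i}_{\ast}(Q_{n}, \mathcal{I}_{Z}) = 0$ for $2 \leq i \leq n-2$, together with $h^{1}(Q_{n}, \mathcal{I}_{Z}(r+t)) = 0$ for $t \neq 0$ and $h^{1}(Q_{n}, \mathcal{I}_{Z}(r)) = 1$. Since $Z$ has codimension at least two, its intermediate cohomology $H^{i}_{\ast}(\mathcal{I}_{Z})$ ranges only over $1 \leq i \leq \dim Z \leq n-2$, so these computations already exhaust every intermediate degree; hence $h^{1}(Q_{n}, \mathcal{I}_{Z}(r)) = 1$ is the unique nonzero intermediate cohomology. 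As this one-dimensional module is concentrated in a single twist, multiplication by the irrelevant ideal lands in $H^{1}(Q_{n},\mathcal{I}_{Z}(r+1))=0$ and so annihilates it, which yields the Buchsbaum property; thus $Z$ is arithmetically Buchsbaum.

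I expect the only genuinely delicate point to be the identification of the cokernel of $N_{\sF}^{\vee}\hookrightarrow\Omega^{1}_{Q_{n}}$ with $\mathcal{I}_{Z}(r)$, namely checking that the degeneracy locus of the quotient map coincides scheme-theoretically with the singular scheme $Z$ of the distribution (this uses that the rank one reflexive sheaf $T_{\sF}$ is a line bundle on the smooth $Q_{n}$). A secondary bookkeeping issue is ensuring the index range $1 \leq i \leq n-2$ is non-degenerate and that $i=1$ is truly an intermediate degree, which is exactly guaranteed by the hypothesis $n>3$ (and remains valid at $n=3$, recovering \cite[Theorem 4.1]{AMS}). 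Granting those points, the cohomological comparison is routine from Theorems \ref{flenner} and \ref{BottQ}.
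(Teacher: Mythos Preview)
Your proposal is correct and follows essentially the same route as the paper: twist the exact sequence $0\to N_{\sF}^{\vee}\to\Omega^{1}_{Q_{n}}\to\mathcal{I}_{Z}(r)\to 0$, use the aCM hypothesis to obtain $H^{i}(Q_{n},\Omega^{1}_{Q_{n}}(t))\simeq H^{i}(Q_{n},\mathcal{I}_{Z}(r+t))$ for $1\le i\le n-2$, and evaluate the left-hand side via Theorems~\ref{flenner} and~\ref{BottQ}. Your write-up is in fact slightly more careful than the paper's in two places: you explicitly justify why the single nonzero module in one twist forces the Buchsbaum property, and you note that $\dim Z\le n-2$ bounds the range of intermediate degrees that must be checked.
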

				
				\begin{proof} Consider the foliation $\sF$ given by the short exact sequence and tensorizing it $\mathcal{O}_{Q_{n}}(t)$, one has
					\begin{equation} \label{dist-conormal}
						0  \to N_{\sF}^{\vee}(t) \to \Omega_{Q_{n}}^{1}(t) \to  \mathcal{I}_{Z}(r+t) \to 0.
					\end{equation}
					
					
					Consider the piece of the long exact sequence for $i=1, \ldots, n-1.$ 
					
					\begin{equation*}
						\cdots \to	H^{i}(Q_{n}, N_{\sF}^{\vee}(t))  \to  H^{i}(Q_{n}, \Omega_{Q_{n}}^{1}(t)) \to H^{i}(Q_{n},{\mathcal I}_{Z}(r+t)) \to \cdots
					\end{equation*}
					

						Since $N_{\sF}^{\vee}$ is aCM, it has no intermediate cohomology, i.e.,
						$$ H^{1}_{\ast}(Q_{n}, N_{\sF}^{\vee}) = \cdots = H^{n-1}_{\ast}(Q_{n}, N_{\sF}^{\vee}) = 0.$$

						This implies, for $i =2, \ldots, n-2,$ the isomorphism $H^{i}(Q_{n}, \Omega_{Q_{n}}^{1}(t)) \simeq H^{i}(Q_{n}, {\mathcal I}_{Z}(r+t))$. By Theorem \ref{flenner}, $H^{i}(Q_{n}, \Omega_{Q_{n}}^{1}(t)) = 0,$ for $i =2,\ldots, n-2,$ and by Theorem \ref{BottQ}, $H^{1}(Q_{n}, \Omega_{Q_{n}}^{1}(t)) = 0,$ for $t \neq 0.$ Therefore, $1=h^{1}(Q_{n}, \Omega_{Q_{n}}^{1}) = h^{1}(Q_{n}, {\mathcal I}_{Z}(r))$ is the unique nonzero cohomology and we conclude that $Z$ is aB, as desired.
						
					\end{proof}

					We now provide the converse of Theorem \ref{conormal} by using the criterion \cite[Theorem 3.5, p.337]{Ott2}. Thus, our next theorem shows when conormal sheaf of a distribution on $Q_{n}, n>3,$ is isomorphic to a direct sum of line bundles and twisted spinor bundle.
	
\begin{Lemma} \label{lemma aux2}
	Let $\sF$ be one-dimensional holomorphic distribution on $Q_{n}$, n>3, with singular set denoted by $Z = \Sing(\sF)$. If $Z$ is arithmetically Buchsbaum with $h^{1}(Q_{n}, \mathcal{I}_{Z}(r)) =1$ being the only nonzero intermediate cohomology for
	$H^{i}(Q_{n}, \mathcal{I}_{Z})$ and $h^{n-1}(N_{\sF}^{\vee}(-r-n+1))=h^n(N_{\sF}^{\vee}(-r-n)) = 0,$ then $N_{\sF}^{\vee}$ is $(-r)$-regular in the sense Castelnuovo-Mumford.
\end{Lemma}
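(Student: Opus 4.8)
The plan is to verify the defining vanishing for Castelnuovo--Mumford regularity directly: with $m=-r$, I must show $H^{i}(Q_{n},N_{\sF}^{\vee}(-r-i))=0$ for every $i>0$, and since $\dim Q_{n}=n$ it suffices to treat $1\le i\le n$. The tool is the conormal sequence (\ref{dist-conormal}) specialised to the twist $t=-r-i$, namely
\begin{equation*}
0 \to N_{\sF}^{\vee}(-r-i) \to \Omega_{Q_{n}}^{1}(-r-i) \to \mathcal{I}_{Z}(-i) \to 0,
\end{equation*}
whose long exact cohomology sequence contains the segment
\begin{equation*}
H^{i-1}(Q_{n},\mathcal{I}_{Z}(-i)) \to H^{i}(Q_{n},N_{\sF}^{\vee}(-r-i)) \to H^{i}(Q_{n},\Omega_{Q_{n}}^{1}(-r-i)).
\end{equation*}
It is therefore enough to annihilate the two flanking groups for $1\le i\le n-2$ and to dispose of the two top degrees separately.

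For $1\le i\le n-2$ I would argue as follows. The left-hand term vanishes for $i=1$ because $\mathcal{I}_{Z}\hookrightarrow\mathcal{O}_{Q_{n}}$ gives $H^{0}(\mathcal{I}_{Z}(-1))\subseteq H^{0}(\mathcal{O}_{Q_{n}}(-1))=0$ (the sequence (\ref{ideal sequence}) together with Theorem \ref{BottQ}); for $3\le i\le n-2$ it is an intermediate cohomology $H^{i-1}_{\ast}(\mathcal{I}_{Z})$ with $i-1\ge 2$, which vanishes because $Z$ is arithmetically Buchsbaum; and the single remaining case $i=2$ gives $H^{1}(\mathcal{I}_{Z}(-2))$, which is zero since the only nonzero value of $H^{1}_{\ast}(\mathcal{I}_{Z})$ is attained at the twist $r$ and $r\neq -2$. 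The right-hand term lies in the range $p+q=i+1<n$, and the inequality $-r-i<1-i$ (valid because $r>-1$) places it in the third vanishing case of Flenner's Theorem \ref{flenner}, so that $H^{i}(\Omega_{Q_{n}}^{1}(-r-i))=0$; any boundary twist escaping Flenner is covered by Bott's formula (Theorem \ref{BottQ}). Consequently $H^{i}(N_{\sF}^{\vee}(-r-i))=0$ for $1\le i\le n-2$.

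The remaining degrees $i=n-1$ and $i=n$ are exactly what the two cohomological hypotheses supply: the assumption $h^{n-1}(N_{\sF}^{\vee}(-r-n+1))=0$ is literally $H^{n-1}(N_{\sF}^{\vee}(-r-(n-1)))=0$, and $h^{n}(N_{\sF}^{\vee}(-r-n))=0$ is the case $i=n$. Combining the three ranges yields $H^{i}(N_{\sF}^{\vee}(-r-i))=0$ for all $1\le i\le n$, i.e. $N_{\sF}^{\vee}$ is $(-r)$-regular. The main obstacle is precisely these top two degrees: for $i=n-1,n$ one has $p+q\ge n$, so the $\Omega^{1}$-term is no longer governed by Flenner or Bott and the $\mathcal{I}_{Z}$-term need not vanish, which is why neither group can be extracted from the Buchsbaum hypothesis and both must be imposed by hand; a secondary point needing care is the boundary case $i=2$, where one must know that the distinguished twist $r$ carrying the nonzero $h^{1}$ of $\mathcal{I}_{Z}$ differs from $-2$.
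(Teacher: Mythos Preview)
Your proposal is correct and follows essentially the same route as the paper: twist the conormal sequence (\ref{dist-conormal}) by $t=-r-i$, sandwich $H^{i}(N_{\sF}^{\vee}(-r-i))$ between $H^{i-1}(\mathcal{I}_{Z}(-i))$ and $H^{i}(\Omega^{1}_{Q_{n}}(-r-i))$, kill both flanks via the aB hypothesis and Theorems~\ref{BottQ}/\ref{flenner} for $1\le i\le n-2$, and invoke the two top-degree hypotheses for $i=n-1,n$. Your write-up is in fact a bit more careful than the paper's, which treats $i>2$ and $i=1$ but does not explicitly isolate the case $i=2$ (where one needs $r\neq -2$ so that $H^{1}(\mathcal{I}_{Z}(-2))$ is not the distinguished nonzero group).
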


\begin{dem}  Considering the sequence (\ref{dist-conormal}), where $t = -r-i,$ and taking cohomology, we get
	$$ 	\cdots \to  H^{i-1}(Q_{n}, \mathcal{I}_{Z}(-i)) \to  H^{i}(Q_{n},N_{\sF}^{\vee}(-r-i))  \to H^i(Q_{n}, \Omega^1_{Q_{n}}(-r-i))\to \cdots$$ 
	The term on the left vanishes for $i>2,$ since $Z$ is aB and when $i=1$ this term vanishes using the sequence (\ref{ideal sequence}) and Bott's formula.
	On the other hand, applying the Theorems \ref{BottQ} and \ref{flenner}, the term on the right vanishes for all $1 \leq i \leq n-2.$ As by hypothesis $h^{n-1}(N_{\sF}^{\vee}(-r-n+1))=h^n(N_{\sF}^{\vee}(-r-n)) = 0,$ we conclude that $N_{\sF}^{\vee}$ is $(-r)$-regular.
\end{dem}					

\begin{thm} Let $\sF$ be an one-dimensional holomorphic distribution on $Q_{n}$. If $Z$ is arithmetically Buchbaum with $h^{1}(Q_{n}, {\mathcal I}_{Z}(r)) = 1$ being the only nonzero intermediate cohomology for $H^i(Q_{n},\mathcal{I}_Z)$, and $h^{2}(Q_{n}, N_{\sF}^{\vee}) = h^{n-1}(N_{\sF}^{\vee}(-r-n+1))= h^{n-1}(Q_{n}, N_{\sF}^{\vee}(-n+2)) = h^n(N_{\sF}^{\vee}(-r-n)) = 0,$ then $N_{\sF}^{\vee}$ is aCM.

						
					\end{thm}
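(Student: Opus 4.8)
The plan is to run the argument of Theorem \ref{thm3.7} in the conormal setting, feeding the long exact cohomology sequence of the conormal sequence (\ref{dist-conormal}), that is $0 \to N_{\sF}^{\vee}(t) \to \Omega_{Q_{n}}^{1}(t) \to \mathcal{I}_{Z}(r+t) \to 0$, twist by twist. Since $N_{\sF}^{\vee}$ is a vector bundle on $Q_n$, showing it is aCM amounts exactly to proving $H^{i}(Q_{n}, N_{\sF}^{\vee}(t)) = 0$ for all $1 \leq i \leq n-1$ and all $t \in \mathbb{Z}$; once that is done Theorem \ref{teo 2.7} even presents $N_{\sF}^{\vee}$ as a sum of line bundles and a twisted spinor bundle. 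The two inputs I would play against each other are the cohomology of $\Omega_{Q_{n}}^{1}(t)$, controlled by Theorems \ref{BottQ} and \ref{flenner}, and the cohomology of $\mathcal{I}_{Z}(r+t)$, controlled by the hypothesis that $Z$ is aB with $h^{1}(\mathcal{I}_Z(r)) = 1$ as the only nonzero intermediate module.

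First I would record the vanishing of $H^i(\Omega^1_{Q_n}(t))$: by Theorem \ref{flenner} it vanishes for every $2 \leq i \leq n-2$ and every $t$, while Theorem \ref{BottQ} gives $H^1(\Omega^1_{Q_n}(t)) = 0$ for $t \neq 0$ and $H^{n-1}(\Omega^1_{Q_n}(t)) = 0$ for $t \neq -n+2$, the two exceptional groups being one dimensional. In the long exact sequence the group $H^i(N_{\sF}^{\vee}(t))$ is squeezed between $H^{i-1}(\mathcal{I}_{Z}(r+t))$ and $H^i(\Omega^1_{Q_n}(t))$. Hence for $3 \leq i \leq n-2$ both neighbours vanish (the left one because $i-1 \geq 2$ and $Z$ is aB), giving $H^i(N_{\sF}^{\vee}(t)) = 0$ for all $t$; for $i = 2$ the same works whenever $t \neq 0$, and for $i = n-1$ whenever $t \neq -n+2$ (here the left neighbour $H^{n-2}(\mathcal{I}_{Z}(r+t))$ vanishes since $n-2 \geq 2$, using $n>3$). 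The obstructions are thus confined to $i=1$ and to the three pairs $(i,t)=(2,0)$ and $(n-1,-n+2)$.

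For $i = 1$ I would split into two ranges. When $t < -r$ and $t \neq 0$ we have $H^0(\mathcal{I}_{Z}(r+t)) = 0$, as it injects into $H^0(\mathcal{O}_{Q_n}(r+t)) = 0$, together with $H^1(\Omega^1_{Q_n}(t)) = 0$, so $H^1(N_{\sF}^{\vee}(t)) = 0$ directly. To cover $t \geq -r$ I would invoke Lemma \ref{lemma aux2}, which uses precisely the hypotheses $h^{n-1}(N_{\sF}^{\vee}(-r-n+1)) = h^{n}(N_{\sF}^{\vee}(-r-n)) = 0$ to conclude that $N_{\sF}^{\vee}$ is $(-r)$-regular; then Theorem \ref{CM-regularity} propagates regularity upward and yields $H^1(N_{\sF}^{\vee}(t)) = 0$ for all $t \geq -r-1$. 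The two ranges together give $H^1(N_{\sF}^{\vee}(t)) = 0$ for every $t \neq 0$.

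It remains to treat the exceptional twists, where I expect the genuine work to lie. At $t = -n+2$ the left neighbour $H^{n-2}(\mathcal{I}_{Z}(r-n+2))$ vanishes, so $H^{n-1}(N_{\sF}^{\vee}(-n+2))$ injects into the one dimensional $H^{n-1}(\Omega^1_{Q_n}(-n+2))$ and the hypothesis $h^{n-1}(N_{\sF}^{\vee}(-n+2)) = 0$ finishes it. The delicate case is $t=0$, where both $H^1(\Omega^1_{Q_n}) \cong \mathbb{C}$ and $H^1(\mathcal{I}_{Z}(r)) \cong \mathbb{C}$ are nonzero and must be matched; here I would use $H^0(\Omega^1_{Q_n}) = 0$ (Theorem \ref{BottQ}) and the hypothesis $h^{2}(N_{\sF}^{\vee}) = 0$ to force the comparison map $H^1(\Omega^1_{Q_n}) \to H^1(\mathcal{I}_{Z}(r))$ to be surjective, hence an isomorphism of one dimensional spaces, which identifies $H^1(N_{\sF}^{\vee}) \cong H^0(\mathcal{I}_{Z}(r))$. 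A Serre-duality computation against Theorem \ref{BottQ} (as for $t=-2$ in Theorem \ref{thm3.7}) gives $H^0(\mathcal{I}_{Z}(r)) = 0$ when $r < 0$, while for $r \geq -1$ the regularity already obtained covers $t = 0$ directly, so in all cases $H^1(N_{\sF}^{\vee}(0)) = 0$; the same hypothesis $h^{2}(N_{\sF}^{\vee}) = 0$ simultaneously disposes of $(i,t) = (2,0)$. Assembling every case yields $H^i_{\ast}(N_{\sF}^{\vee}) = 0$ for $1 \leq i \leq n-1$, that is, $N_{\sF}^{\vee}$ is aCM. The main obstacle is precisely this bookkeeping at $t=0$, where the extra cohomology of $\Omega^1_{Q_n}$ and the single nonzero module of $\mathcal{I}_{Z}$ overlap and have to be cancelled against one another exactly.
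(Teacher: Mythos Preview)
Your proposal is correct and follows essentially the same route as the paper's proof: both squeeze $H^i(N_{\sF}^{\vee}(t))$ in the long exact sequence of (\ref{dist-conormal}) between the cohomology of $\Omega^1_{Q_n}(t)$ (handled via Theorems \ref{BottQ} and \ref{flenner}) and that of $\mathcal{I}_Z(r+t)$ (handled by the aB hypothesis), invoke Lemma \ref{lemma aux2} and Castelnuovo--Mumford regularity for the range $t\ge -r$ at $i=1$, and dispatch the exceptional twists $t=0$ and $t=-n+2$ with the remaining hypotheses $h^{2}(N_{\sF}^{\vee})=0$ and $h^{n-1}(N_{\sF}^{\vee}(-n+2))=0$. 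Your case split on $r$ at $t=0$ is slightly more explicit than the paper's, but the argument is the same.
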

					
					\begin{proof} In fact, as the above result, given a one-dimensional distribution $\sF$ we can consider the long exact sequence of cohomology
						
						$$\begin{array}{llllll}
							\cdots \hspace{-1.5mm}&\to H^{i-1}(Q_{n}, N_{\sF}^{\vee}(t))  \to  H^{i-1}(Q_{n}, \Omega_{Q_{n}}^{1}(t)) \to H^{i-1}(Q_{n},{\mathcal I}_{Z}(r+t)) \to  \\ \\
							&\to   H^{i}(Q_{n}, N_{\sF}^{\vee}(t))  \to  H^{i}(Q_{n}, \Omega_{Q_{n}}^{1}(t)) \to H^{i}(Q_{n},{\mathcal I}_{Z}(r+t)) \to \cdots
						\end{array}$$

By Theorem \ref{flenner}, $H^{1}(Q_{n}, \Omega_{Q_{n}}^{1}(t)) = 0$ for all $t\neq 0$	and
	\begin{center} $H^{0}(Q_{n},\mathcal{I}_{Z}(r+t)) = 0$ for $t < -r,$  since $ H^{0}(Q_{n}, \mathcal{O}_{Q_{n}}(r+t)) =0.$  \end{center}  
 By the Lemma \ref{lemma aux2} and Theorem \ref{CM-regularity}, $H^1(N_{\sF}^{\vee}(t))=0$ for $t \ge -r.$  Then, $H^{1}(Q_{n}, N_{\sF}^{\vee}(t)) = 0$ for all $t \neq 0$.
 
For $t=0$, using the hypothesis $h^{2}(Q_{n}, N_{\sF}^{\vee}) =0$ and Serre duality for to obtain the isomorphism $H^{0}(Q_{n}, {\mathcal I}_{Z}(r)) \simeq H^{n}(Q_{n}, \mathcal{O}_{Q_{n}}(-r-n)) =0$ for $r \neq 0,$ we have the short exact sequence,
						
						$$0 \to H^{1}(Q_{n}, N_{\sF}^{\vee})  \to  H^{1}(Q_{n}, \Omega_{Q_{n}}^{1}) \to H^{1}(Q_{n},{\mathcal I}_{Z}(r)) \to 0.$$
						
						\noindent Since $H^{1}(Q_{n}, {\mathcal I}_{Z}(r)) \simeq \mathbb{C}$ and $H^{1}(Q_{n}, \Omega_{Q_{n}}^{1}) \simeq \mathbb{C},$ we conclude $h^{1}(Q_{n}, N_{\sF}^{\vee}) =0.$  Then, $H^{1}_{\ast}(Q_{n}, N_{\sF}^{\vee}) = 0.$

						Now, note that $H^{1}(Q_{n}, {\mathcal I}_{Z}(r+t)) =0$ for $t \neq 0$ and by Theorem \ref{flenner}, $H^{2}(Q_{n},\Omega_{Q_{n}}^{1}(t)) = 0$ for all $t \in \mathbb{Z}$. So, together with the hypothesis $h^{2}(Q_{n}, N_{\sF}^{\vee}) =0$, we have $H^{2}_{\ast}(Q_{n}, N_{\sF}^{\vee})=0.$
						
						Next, noting that $H^{i-1}({\mathcal I}_{Z}(r+t)) =0$ for $i= 3, \ldots, n-2,$ and for all $t \in \mathbb{Z}$ and by Theorem \ref{flenner} $H^{i}(\Omega_{Q_{n}}^{1}(t)) = 0,$ for $i=3, \ldots, n-2,$ and for all $t \in \mathbb{Z},$ it follows that $H^{i}_{\ast}(N_{\sF}^{\vee}) = 0,$ for $i=3,\ldots, n-2$.
						
						To finish, $H^{n-2}({Q_{n},\mathcal I}_{Z}(r+t)) =0$ for all $t \in \mathbb{Z}$ and by Theorem \ref{BottQ}, $H^{n-1}(Q_{n},\Omega_{Q_{n}}^{1}(t)) = 0,$ for $t \neq -n+2.$ Since, $h^{n-1}(Q_{n}, N_{\sF}^{\vee}(-n+2)) = 0,$ we conclude that $H^{n-1}_{\ast}(Q_{n},N_{\sF}^{\vee}) = 0.$ Therefore, $N_{\sF}^{\vee}$ is aCM.
						
					\end{proof}

					\newpage

				\end{document}